\newcommand{\norm}[3][]{#1\|#2#1\|_{#3}}
\newcommand{\set}[3][\big]{#1\{#2\,:\,#3#1\}}
\def\reff#1#2{\!\stackrel{\eqref{#1}}{#2}\!}
\newcounter{statement}
\newenvironment{statement}[2][!]{%
\vskip3mm
\hrule
\hrule
\hrule
\vskip1mm
\noindent%
\refstepcounter{statement}%
\bf#2~\thestatement%
\ifthenelse{\equal{#1}{!}}{.\ }{~(#1).\ }%
\it%
}{%
\vskip1mm
\hrule
\hrule
\hrule
\vskip2mm
}
\newenvironment{theorem}[1][!]{\begin{statement}[#1]{Theorem}}{\end{statement}}
\newenvironment{lemma}[1][!]{\begin{statement}[#1]{Lemma}}{\end{statement}}
\newenvironment{remark}[1][!]{\begin{statement}[#1]{Remark}}{\end{statement}}
\newenvironment{algorithm}[1][!]{\begin{statement}[#1]{Algorithm}}{\end{statement}}
\renewcommand{\subsection}[1]{%
 \vskip2mm
 \refstepcounter{subsection}%
 {\bf\arabic{section}.\arabic{subsection}.~#1.~}%
}
\newcommand*\patchAmsMathEnvironmentForLineno[1]{%
  \expandafter\let\csname old#1\expandafter\endcsname\csname #1\endcsname
  \expandafter\let\csname oldend#1\expandafter\endcsname\csname end#1\endcsname
  \renewenvironment{#1}%
     {\linenomath\csname old#1\endcsname}%
     {\csname oldend#1\endcsname\endlinenomath}}%
\newcommand*\patchBothAmsMathEnvironmentsForLineno[1]{%
  \patchAmsMathEnvironmentForLineno{#1}%
  \patchAmsMathEnvironmentForLineno{#1*}}%
\title{Optimal convergence behavior of adaptive FEM \\ driven by simple $\boldsymbol{({h}-{h}/2)}$-type error estimators}
\author{Christoph Erath}
\address{TU Darmstadt, Department of Mathematics, Dolivostra\ss{}e 15, 64293 Darmstadt, Germany}
\email{Erath@mathematik.tu-darmstadt.de}
\author{Gregor Gantner}
\author{Dirk Praetorius}
\address{TU Wien, Institute for Analysis and Scientific Computing, Wiedner Hauptstr.~8--10/E101/4, 1040 Wien, Austria}
\email{Gregor.Gantner@asc.tuwien.ac.at \quad\rm (corresponding author)}
\email{Dirk.Praetorius@asc.tuwien.ac.at}
\subjclass{65N30, 65N50, 65N12, 65N15, 41A25}
\keywords{finite element method, a posteriori error estimators, adaptive algorithm, local mesh-refinement, optimal convergence rates}
\date{\today}
\begin{document}

\begin{abstract}
For some Poisson-type model problem, we prove that adaptive FEM  driven by the $(h-h/2)$-type error estimators from [Ferraz-Leite, Ortner, Praetorius, \emph{Numer.\ Math.} 116 (2010)] leads to convergence with optimal algebraic convergence rates. Besides the implementational simplicity, another striking feature of these estimators is that they can provide guaranteed lower bounds for the energy error with known efficiency constant $1$.
\end{abstract}
\maketitle

\section{Introduction}

\noindent 
Let $\Omega \subset \mathbb{R}^d$ with $d\ge2$ be a bounded Lipschitz domain with polyhedral boundary $\Gamma := \partial\Omega$.
Given $f\in L^2(\Omega)$, let $u \in H^1_0(\Omega)$ be the unique weak solution 
\begin{align}\label{eq:strong}
 -{\rm div}(\boldsymbol{A} \nabla u) = f
 \text{ in } \Omega
 \quad\text{subject to Dirichlet boundary conditions}\quad
 u = 0
 \text{ on } \Gamma,
\end{align}
where $\boldsymbol{A}:\Omega\to\mathbb{R}^{d\times d}$ is piecewise constant on some initial conforming triangulation $\mathcal{T}_0$ and maps into the space of symmetric positive definite matrices. 

Based on a conforming simplicial triangulation $\mathcal{T}_\ell$, we consider the $H^1$-conforming FE space of $\mathcal{T}_\ell$-piecewise polynomials of degree $p \ge 1$. Let $u_\ell$ be the corresponding FEM solution. Throughout, the index $\ell \in \mathbb{N}_0 := \{0,1,2,\dots\}$ denotes the step of the adaptive algorithm. 
Due to singularities of the (unknown) exact solution, uniform mesh-refinement usually leads a suboptimal convergence behavior of the energy norm error $\norm{\boldsymbol{A}^{1/2}\nabla( u-U_\ell)}{\Omega}$, where $\norm{\cdot}{\Omega} := \norm{\cdot}{L^2(\Omega)}$.
However, the appropriate grading of the triangulation $\mathcal{T}_\ell$ has the potential to lead to the optimal convergence rate ${\mathcal{O}}((\#\mathcal{T}_\ell)^{-p/d})$ with respect to the number of elements $\#\mathcal{T}_\ell$. Such a mesh-grading can automatically be generated by adaptive mesh-refining algorithms of the type
\begin{align}\label{eq:semr}
\boxed{\rm~SOLVE~} 
\longrightarrow \boxed{\rm~ESTIMATE~}
\longrightarrow \boxed{\rm~MARK~}
\longrightarrow \boxed{\rm~REFINE~}
\end{align}
In the last two decades, the mathematical understanding of adaptive algorithms has matured. Starting with the first convergence results in~\cite{doerfler,mns}, it is meanwhile known that the adaptive algorithm, driven by the canonical residual error estimator, leads to linear convergence with optimal algebraic rates; see, e.g.,~\cite{stevenson07,ckns,ffp14}. The same result holds for any estimator, which is \emph{locally equivalent} to the residual error estimator~\cite{ks,axioms}, where the analysis strongly exploits this local equivalence. Examples for locally equivalent estimators include hierarchical error estimators, averaging estimators, and equilibrated fluxes.

\def\qsat{q_{\rm sat}}
The current work considers $(h-h/2)$-type error estimators which are \emph{only globally, but not locally equivalent} to residual error estimators. This error estimation strategy is a well-known technique; see~\cite{hnw87} for ordinary differential equations and the works of Bank~\cite{bank85,bank93,bank96} or the monograph~\cite[Chapter~5]{aoAposteriori} in the context of FEM. 
Let $\widehat{\mathcal{T}}_\ell$ be the uniform refinement of $\mathcal{T}_\ell$. Let $\widehat u_\ell$ be the corresponding FE solution. 
The natural $(h-h/2)$-error estimator 
\begin{align}\label{intro:hh2estimator}
 \widetilde\mu_\ell := \norm{\boldsymbol{A}^{1/2} \nabla (\widehat u_\ell - u_\ell)}{\Omega}
\end{align}
is a computable quantity which can be used to estimate the error $\norm{\boldsymbol{A}^{1/2} \nabla (u - u_\ell)}{\Omega}$. According to the Galerkin orthogonality, it holds that
\begin{align}\label{intro:pythagoras}
 \norm{\boldsymbol{A}^{1/2} \nabla (u - \widehat u_\ell)}{\Omega}^2
 + \widetilde\mu_\ell^{\,2}
 = \norm{\boldsymbol{A}^{1/2} \nabla (u - u_\ell)}{\Omega}^2.
\end{align}
From this, it is easy to see that
\begin{align}\label{intro:efficiency+reliability}
 \widetilde\mu_\ell \le \norm{\boldsymbol{A}^{1/2} \nabla (u - u_\ell)}{\Omega}
 \le (1-q_{\rm sat}^2)^{-1/2}
\, \widetilde\mu_\ell.
\end{align}
The upper bound requires and is even equivalent to the so-called \emph{saturation assumption}
\begin{align}\label{intro:saturation_assumption}
 \norm{\boldsymbol{A}^{1/2} \nabla (u \!-\! \widehat u_\ell)}{\Omega}
 \le \qsat \, \norm{\boldsymbol{A}^{1/2} \nabla (u \!-\! u_\ell)}{\Omega}
 \quad \text{with some uniform} \quad 
  0 < \qsat < 1.
\end{align}
We remark that \eqref{intro:saturation_assumption} dates back to the early work~\cite{bank85}, but may fail to hold in general~\cite{bek96,dn02} and is essentially equivalent to asymptotic behavior of the FEM; see the discussion in~\cite[Section~5.2]{fp2008} and Remark~\ref{rem:estimator} below. However, under certain assumptions on the polynomial degree $p$ and/or the mesh-refinement (e.g., $d=2$ with bisec5-refinement or $d=2$ with $p\ge2$ and bisec3-refinement), one can rigorously prove that
\begin{align}\label{intro:reliability}
 \big( \widetilde\mu_\ell^{\,2} + {\rm osc}_\ell(f)^2 \big)^{1/2} 
 \le \big( \norm{\boldsymbol{A}^{1/2} \nabla (u - u_\ell)}{\Omega}^2 + {\rm osc}_\ell(f)^2 \big)^{1/2}
 \le C_{\rm rel} \, \big( \widetilde\mu_\ell^{\,2} + {\rm osc}_\ell(f)^2 \big)^{1/2},
\end{align}
where ${\rm osc}_\ell(f)$ denote the data oscillations; see Theorem~\ref{thm:abstract} below, where we extend an idea from~\cite{doerfler,mns}. 
We stress that the counter examples from \cite{bek96,dn02} show that \eqref{intro:reliability} requires the inner node property (bisec5-refinement for $d=2$), if $p=1$.
Having to compute $\widehat u_\ell$, it is not attractive to compute the less accurate $u_\ell$; cf.~\eqref{intro:pythagoras}. In this work, we thus consider variants of the $h-h/2$ error estimator from~\cite{fop}, which avoid this computation, e.g.,
\begin{align}\label{eq:intro}
 \eta_\ell = \big( \norm{(1 - \pi_\ell) \boldsymbol{A}^{1/2} \nabla \widehat u_\ell}{\Omega}^2 + {\rm osc}_\ell(f)^2 \big)^{1/2},
\end{align}
where $\pi_\ell$ is the $\mathcal{T}_\ell$-elementwise $L^2$-projection onto polynomials of degree $p-1$ (see~\eqref{table} below for further variants). We prove that
\begin{align}\label{intro2:efficiency+reliability}
 \eta_\ell \le \big( \norm{\boldsymbol{A}^{1/2} \nabla (u - u_\ell)}{\Omega}^2 + {\rm osc}_\ell(f)^2 \big)^{1/2}
 \le C_{\rm rel}C_{\rm hh2} \, \eta_\ell.
\end{align}
\def\Clin{C_{\rm lin}}%
\def\Copt{C_{\rm opt}}%
\def\qlin{q_{\rm lin}}%
It is thus a particular strength of this approach that $\eta_\ell$ is a computable guaranteed lower bound for the \emph{total error} even with known constant $1$. Using this estimator (or one of its variants~\eqref{table}) in the adaptive algorithm (see Algorithm~\ref{algorithm} for the precise statement), we prove that the error estimator (or equivalently: the total error) is linearly convergent with optimal algebraic rates, i.e., \begin{align}\label{intro:lin}
 \eta_{\ell+n} \le \Clin \qlin^n \, \eta_\ell
 \quad \text{for all } \ell, n \in \mathbb{N}_0
\end{align}
and, for all possible algebraic rates $s > 0$, 
\begin{align}\label{intro:opt}
 \eta_\ell \le \Copt \, (\#\mathcal{T}_\ell)^{-s}
\end{align}
with certain constants $\Clin, \Copt > 0$ and $0 < \qlin < 1$.
 \emph{Possible algebraic rates} are, as usually, characterized in terms of certain approximation classes which are the same as those for residual error estimators. In explicit terms, the simple $(h-h/2)$-type error estimators thus yield the same optimal convergence behavior as the residual error estimators, even though these two types of estimators are not locally equivalent.

{\bf Outline.}\quad
In Section~\ref{section:main_result}, we collect the mathematical framework to formally state our main results. To this end, we formulate the precise assumptions on the conforming triangulations and the mesh-refinement (Section~\ref{section:triangulations}), define the employed FEM spaces (Section~\ref{section:fem}), introduce the considered $(h-h/2)$-type error estimators (Section~\ref{section:hh2}) and the corresponding adaptive algorithm (Algorithm~\ref{algorithm} as a precise specification of~\eqref{eq:semr}), and formulate the main result (Theorem~\ref{thm:abstract} which gives the formal statement of~\eqref{intro2:efficiency+reliability} as well as~\eqref{intro:lin}--\eqref{intro:opt}). For the proof of Theorem~\ref{thm:abstract}, we rely on certain properties of the residual error estimator. These are collected and proved in Section~\ref{section:residual}, where we slightly improve the discrete reliability estimate from~\cite{stevenson07,ckns} as well as the discrete efficiency estimate from~\cite{doerfler,mns}. The proof of Theorem~\ref{thm:abstract} is given in Section~\ref{section:proof}. Finally, we underline the theoretical findings by some numerical experiments in Section~\ref{section:numerics}.

\bigskip

{\bf General notation.}\quad
Throughout, we write $a \lesssim b$ to abbreviate $a \le  Cb$ with some generic constant $C> 0$, which is clear from the context. Moreover, $a \simeq b$ abbreviates $a \lesssim b \lesssim a$. Mesh-related quantities have the same index, e.g., $u_\star$ is the FEM solution corresponding to the triangulation $\mathcal{T}_\star$, and $\mathcal{E}_\bullet$ is the set of facets of the triangulation $\mathcal{T}_\bullet$.
Throughout, we make the following convention: If $\mathcal{T}_\star$ is a triangulation and 
$\alpha_\star(T,\cdot) \in \mathbb{R}$ is defined for all $T \in \mathcal{T}_\star$, then
\begin{align}\label{eq:convention}
 \alpha_\star(\cdot) := \alpha_\star(\mathcal{T}_\star,\cdot),
 \quad \text{where} \quad
\alpha_\star({\mathcal{U}}_\star,\cdot)^2 := \sum_{T \in {\mathcal{U}}_\star} \alpha_\star(T,\cdot)^2
 \quad \text{for all } {\mathcal{U}}_\star \subseteq \mathcal{T}_\star.
\end{align}
Finally, $\norm{\cdot}{\omega}^2 := \int_\omega (\cdot)^2\,dx$ abbreviates the $L^2$-norm over a measurable set $\omega$ (with respect to either the $d$-dimensional Lebesgue measure or the $(d-1)$-dimensional surface measure). 

\begin{figure}[t]
 \centering
 \includegraphics[width=35mm]{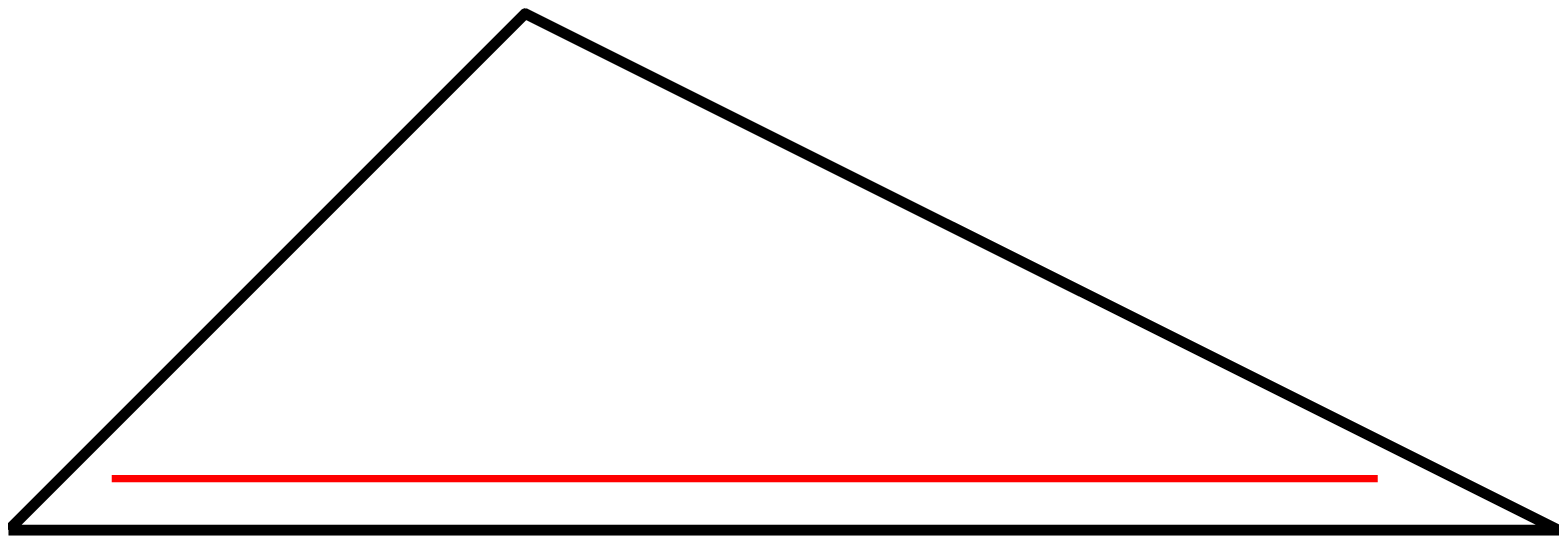} \quad
 \includegraphics[width=35mm]{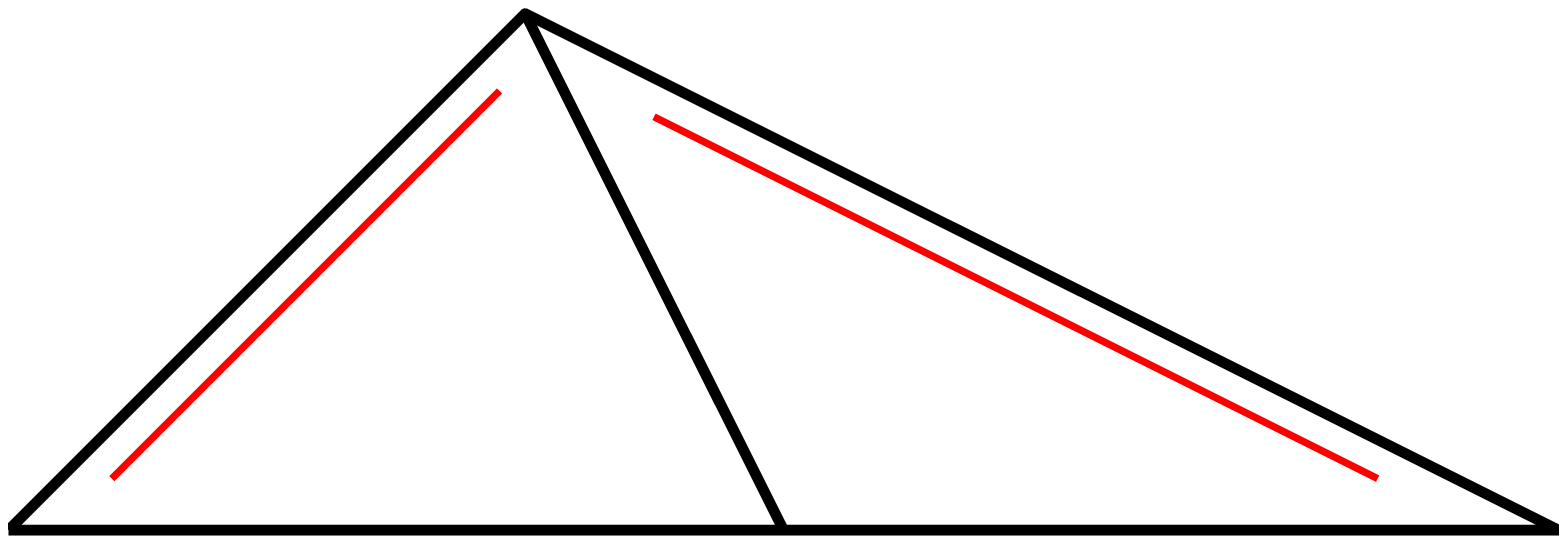} \quad
 \includegraphics[width=35mm]{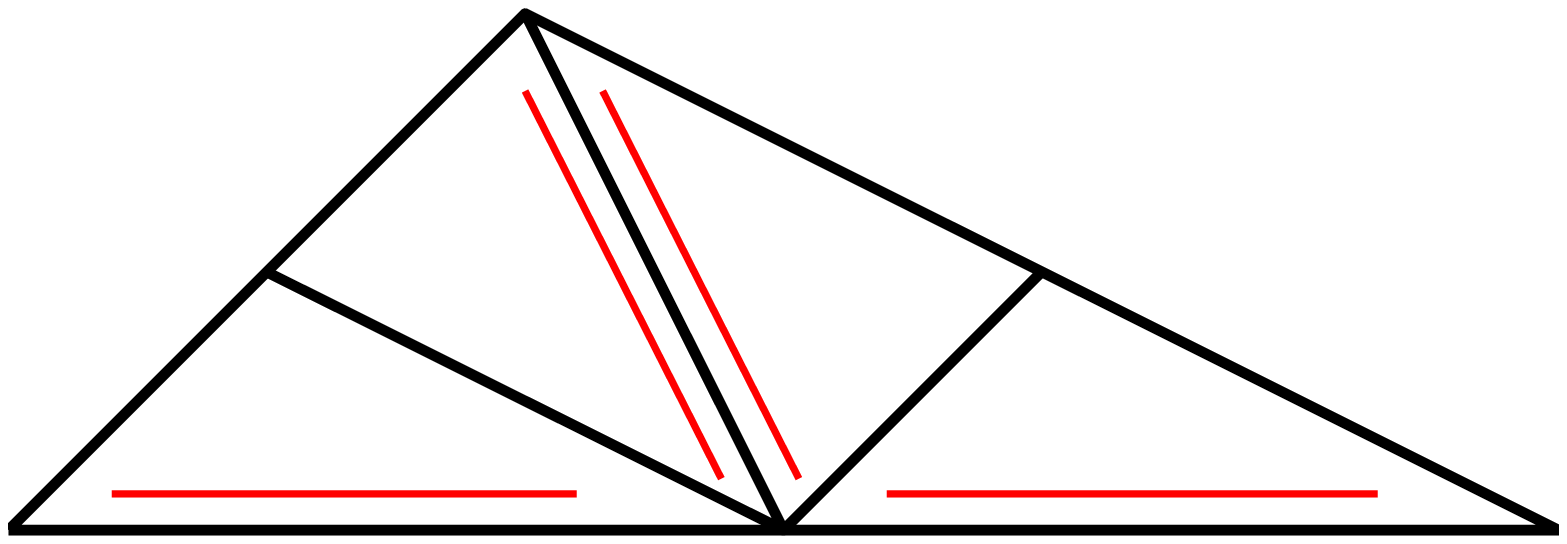} \quad
 \includegraphics[width=35mm]{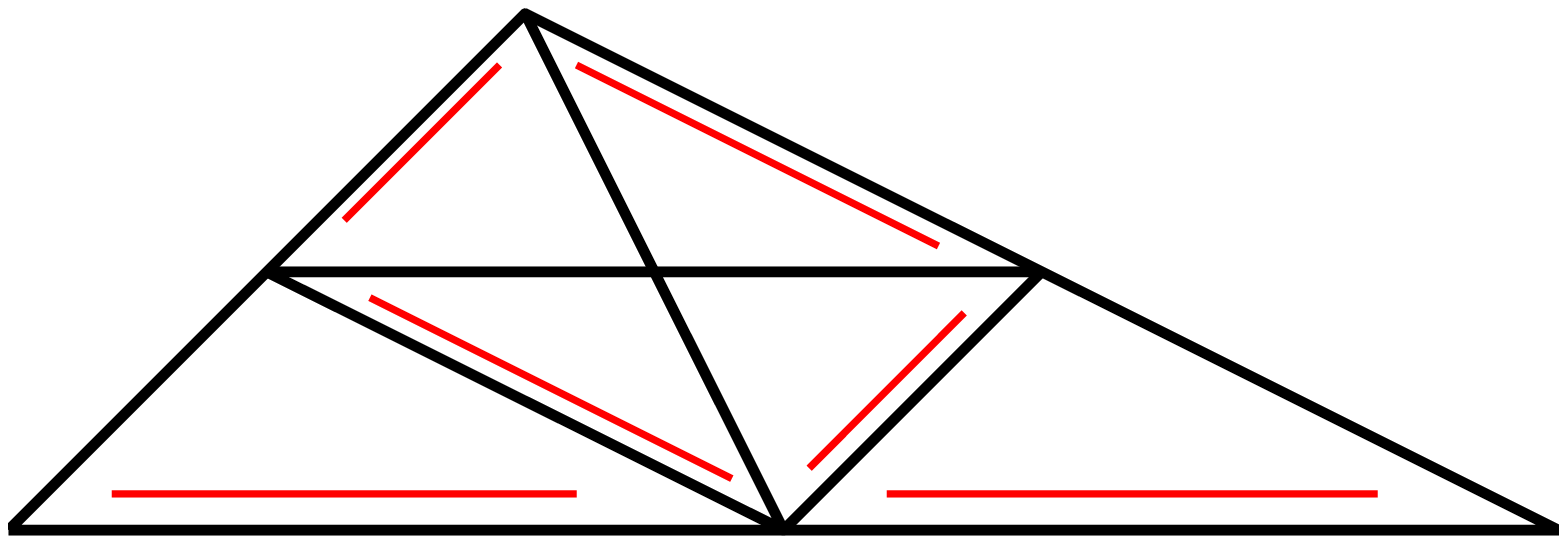} \\
 \caption{\small For NVB in 2D, each triangle $T\in\mathcal{T}_\bullet$ has one \emph{reference edge}, indicated by the double line (left). 
 Bisection of $T$ is achieved by halving the reference edge. The reference edges of the sons are always opposite to the new node. Recursive application of this refinement rule leads to conforming triangulations. It needs three bisections per element to halve all edges of a triangle. Five bisection create 
 an interior node and hence a discrete element bubble function within $T$.}
  \label{fig:nvb}
\end{figure}
%
\newcommand{\flaechena}[1]{
	\filldraw[fill=#1!30,fill opacity=1] (T1_4)--(T1_1)--(T1_3)--cycle;
	\filldraw[fill=#1!30!black!30,fill opacity=1](T1_1)--(T1_2)--(T1_3)--cycle;
	\filldraw[fill=#1!30,fill opacity=1](T1_4)--(T1_1)--(T1_2)--cycle;
	\filldraw[fill=#1!30!black!50,fill opacity=1](T1_2)--(T1_3)--(T1_4)--cycle;
	\draw[color=#1!70!black!100,fill opacity=1, line width=3pt] (T1_4)-- (T1_1);
}
\newcommand{\flaechenb}[1]{

	\filldraw[fill=#1!30,fill opacity=1] (T1_4)-- (T1_1)--(T1_3)--cycle;
	\filldraw[fill=#1!30,fill opacity=1](T1_4)--(T1_1)--(T1_2)--cycle;
	\filldraw[fill=#1!30,fill opacity=1](T1_2)--(T1_3)--(T1_4)--cycle;
	\filldraw[fill=#1!30!black!30,fill opacity=1](T1_1)--(T1_2)--(T1_3)--cycle;

	\draw[dashed,color=#1!70!black!100,fill opacity=1, line width=3pt] (T1_4) -- (T1_1);
}
\newcommand{\flaechenc}[1]{
	\filldraw[fill=#1!20!black!50,fill opacity=1](T1_1)--(T1_2)--(T1_3)--cycle;
	\filldraw[fill=#1!30!black!40,fill opacity=1](T1_2)--(T1_3)--(T1_4)--cycle;
	\filldraw[fill=#1!30!black!50,fill opacity=1] (T1_4)-- (T1_1)--(T1_3)--cycle;
	\filldraw[fill=#1!30,fill opacity=1](T1_4)--(T1_1)--(T1_2)--cycle;
	\draw[color=#1!70!black!100,fill opacity=1, line width=3pt] (T1_4)--(T1_1);
}
\newcommand{\flaechend}[1]{
	\filldraw[fill=#1!30!black!40,fill opacity=1](T1_2)--(T1_3)--(T1_4)--cycle;
	\filldraw[fill=#1!30!black!50,fill opacity=1] (T1_4)-- (T1_1)--(T1_3)--cycle;
	\filldraw[fill=#1!30,fill opacity=1](T1_1)--(T1_2)--(T1_3)--cycle;
	\filldraw[fill=#1!30!black!50,fill opacity=1](T1_4)--(T1_1)--(T1_2)--cycle;
	\draw[color=#1!70!black!100,fill opacity=1, line width=3pt] (T1_4)--(T1_1);
}
\newcommand{\flaechene}[1]{
	\filldraw[fill=#1!30!black!50,fill opacity=1](T1_1)--(T1_2)--(T1_3)--cycle;
	\filldraw[fill=#1!30!black!40,fill opacity=1](T1_2)--(T1_3)--(T1_4)--cycle;
	\filldraw[fill=#1!30!black!50,fill opacity=1] (T1_4)-- (T1_1)--(T1_3)--cycle;
	\filldraw[fill=#1!50!black!30,fill opacity=1](T1_4)--(T1_1)--(T1_2)--cycle;
	\draw[color=#1!70!black!100,fill opacity=1, line width=3pt] (T1_4)--(T1_1);
}
\newcommand{\flaechenf}[1]{
	\filldraw[fill=#1!30,fill opacity=1] (T1_4)-- (T1_1)--(T1_3)--cycle;
	\filldraw[fill=#1!30,fill opacity=1](T1_4)--(T1_1)--(T1_2)--cycle;
	\filldraw[fill=#1!30!black!30,fill opacity=1](T1_2)--(T1_3)--(T1_4)--cycle;
	\filldraw[fill=#1!30!black!50,fill opacity=1](T1_1)--(T1_2)--(T1_3)--cycle;

	\draw[color=#1!70!black!100,dashed,fill opacity=1, line width=3pt] (T1_4) -- (T1_1);
}
\newcommand{\flaecheng}[1]{
	\filldraw[fill=#1!30,fill opacity=1](T1_4)--(T1_1)--(T1_2)--cycle;
	\filldraw[fill=#1!30,fill opacity=1] (T1_4)-- (T1_1)--(T1_3)--cycle;
	\fill[fill=#1!30,fill opacity=1](T1_2)--(T1_3)--(T1_4)--cycle;
	\filldraw[fill=#1!30!black!50,fill opacity=1](T1_1)--(T1_2)--(T1_3)--cycle;
	\draw[color=#1!70!black!100,fill opacity=1, line width=3pt] (T1_4)-- (T1_1);
}
\newcommand{\flaechenh}[1]{
	\filldraw[fill=#1!30,fill opacity=1](T1_1)--(T1_2)--(T1_3)--cycle;
	\filldraw[fill=#1!30!black!50,fill opacity=1](T1_4)--(T1_1)--(T1_2)--cycle;
	\filldraw[fill=#1!30!black!30,fill opacity=1](T1_2)--(T1_3)--(T1_4)--cycle;
	\filldraw[fill=#1!30!black!50,fill opacity=1] (T1_4)-- (T1_1)--(T1_3)--cycle;
	\draw[color=#1!70!black!100,fill opacity=1, line width=3pt] (T1_4)--(T1_1);
}
\newcommand{\flaecheni}[1]{
	\filldraw[fill=#1!30,fill opacity=1] (T1_4)-- (T1_1)--(T1_3)--cycle;
	\filldraw[fill=#1!30,fill opacity=1](T1_4)--(T1_1)--(T1_2)--cycle;
	\filldraw[fill=#1!30,fill opacity=1](T1_2)--(T1_3)--(T1_4)--cycle;
	\filldraw[fill=#1!30,fill opacity=1](T1_1)--(T1_2)--(T1_3)--cycle;

	\draw[dashed,color=#1!70!black!100,fill opacity=1, line width=3pt] (T1_4) -- (T1_1);
}
\newcommand{\tet}[7]{
	\tikzset{knoten/.style={circle,draw=black!100,fill=#2!30,inner sep=1pt,minimum size=4mm}}

	\newarray\coord
	\readarray{coord} {#1}
	\newarray\punkti
	\readarray{punkti}{#3&0&0&0}
	\newarray\name
	\readarray{name}{#4}

	\foreach \i in {1,...,4}{
		\checkcoord(\i)
		\coordinate (T1_\i) at (#5+\cachedata+#6);
	}

		\ifcase#7
		\flaechena{#2}\or 
		\flaechenb{#2} \or
		\flaechenc{#2} \or
		\flaechend{#2} \or
		\flaechene{#2} \or
		\flaechenf{#2} \or
		\flaecheng{#2} \or
		\flaechenh{#2} \or
		\flaecheni{#2}
		\fi

	\foreach \i in {1,...,3}{
		\checkpunkti(\i)
		\ifnum \cachedata>0
			\ifcase\cachedata
			\or 
			\draw[dashed] (T1_1) -- (T1_2); \or
			\draw[dashed] (T1_1) -- (T1_3); \or
			\draw[dashed] (T1_1) -- (T1_4); \or
			\draw[dashed] (T1_2) -- (T1_3); \or
			\draw[dashed] (T1_2) -- (T1_4); \or
			\draw[dashed] (T1_3) -- (T1_4);
			\fi
		\fi
	}
	\foreach \i in {1,...,4}{
		\checkname(\i)
		\node[knoten] at (T1_\i) {\cachedata};
	}
}
\begin{figure}[t]
\begin{center}
\begin{tikzpicture}[line join=round,scale=0.45]

\def \abstand {1.2}

\def \xa {12+\abstand /2}
\def \ya {10}

\def \xb {0}
\def \yb {0}

\def \xc {12}
\def \yc {0}

\def \xd {24}
\def \yd {0}

	\tet{-1.5,0&-5.1,6&-5,2&-11,0}{white!80!black}{6&2&4}{4&3&2&1}{\xa}{\ya}{2}
	
	\tet{-11,0 &-6.25,0&-5,2&-5.1,6}{blue}{2}{1&2&3&4}{\xb}{\yb}{0}
	\tet{-1.5,0&-6.25,0&-5.1,6&-5,2}{blue}{5&6}{1&2&3&4}{\xb+\abstand}{\yb}{8}

	\tet{-11,0 &-6.25,0&-5,2&-5.1,6}{red}{2}{1&2&3&4}{\xc}{\yc}{0}
	\tet{-1.5,0&-6.25,0&-5,2&-5.1,6}{red}{6&2&4}{1&2&3&4}{\xc+\abstand}{\yc}{2}
	
	\tet{-11,0 &-6.25,0&-5,2&-5.1,6}{green}{2}{1&2&3&4}{\xd}{\yd}{0}
	\tet{-1.5,0&-6.25,0&-5,2&-5.1,6}{green}{6&2&4}{1&2&3&4}{\xd+\abstand}{\yd}{2}

	\node[rounded corners=4pt,fill=blue!30,draw=black] at (\xb-9,\yb+5){type 1};
	\node[rounded corners=4pt,fill=red!30,draw=black] at (\xc-9,\yc+5){type 2};
	\node[rounded corners=4pt,fill=green!30,draw=black] at (\xd-9,\yd+5){type 0};
	\path (\xa-7,\ya+3) edge[-latex,pos=0.2, in=90,out=170] node[above,rounded corners=4pt,fill=green!30,draw=black]{type  $\tau=0$} (\xb-4.5,\yb+7);
	\path (\xa-5.7,\ya+3) edge[-latex,pos=0.7, in=90,out=220] node[left,rounded corners=4pt,fill=blue!30,draw=black]{type  $\tau=1$} (\xc-4.5,\yc+7);
	\path (\xa-4,\ya+3) edge[-latex,pos=0.2, in=90,out=10] node[above,rounded corners=4pt,fill=red!30,draw=black]{type  $\tau=2$} (\xd-4.5,\yd+7);

\end{tikzpicture}
\end{center}
\caption{\small For NVB in 3D, each tetrahedron $T\in\mathcal{T}_\bullet$ is assigned with a permutation $(z_{\pi(1)},z_{\pi(2)},z_{\pi(3)},z_{\pi(4)})$ of its vertices $\{z_1,z_2,z_3,z_4\}$ and a type $\tau\in\{0,1,2\}$.  
The numbers in the figure are the positions of the nodes in the corresponding tuple. 
 Bisection of $T$ is achieved by halving the \emph{reference edge} between $z_{\pi(1)}$ and $z_{\pi(4)}$ indicated by the bold line. 
The permutations as well as the types of its sons depend on the permuation and the type of $T$.
 Recursive application of this refinement rule leads to conforming triangulations.}
\label{fig:nvb3d}
\end{figure}
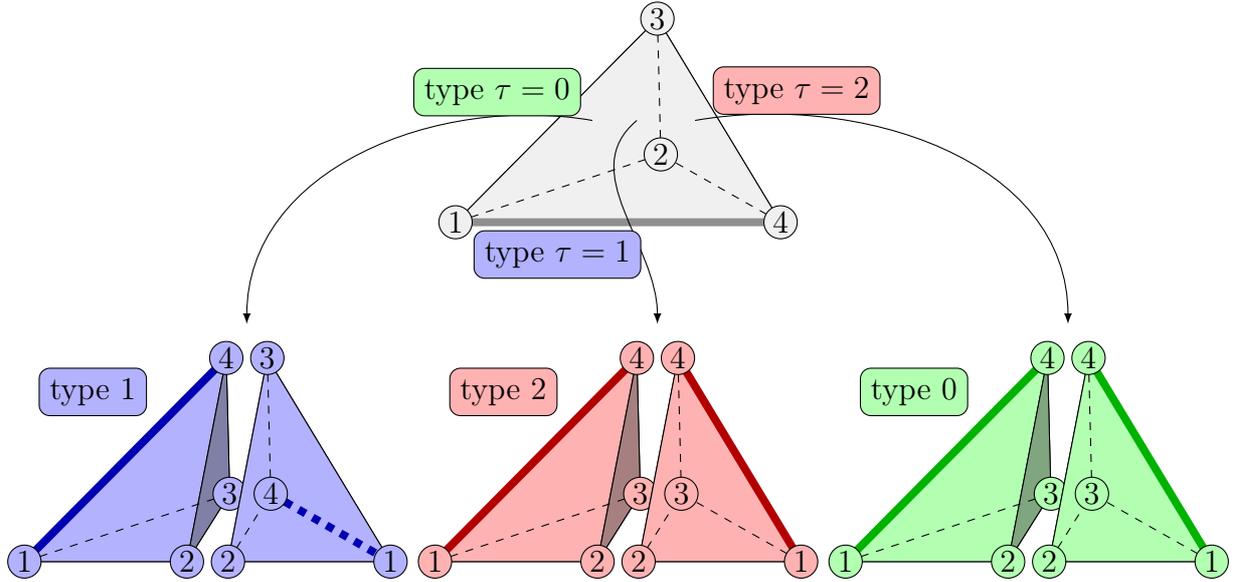


%
\section{Main result}
\label{section:main_result}

\subsection{Conforming triangulations and mesh-refinement}
\label{section:triangulations}
Throughout, $\mathcal{T}_\bullet$ denotes a conforming triangulation of $\Omega$ into non-degenerate compact simplices. In particular, we avoid hanging nodes. The triangulation is called \emph{$\gamma$-shape regular}, if
\begin{align}\label{eq:shaperegular}
 \max_{T\in\mathcal{T}_\bullet} \frac{{\rm diam}(T)}{h_T} \le \gamma.
\end{align}
Here, ${\rm diam}(T)$ denotes the Euclidean diameter of $T$ and $h_T:=|T|^{1/d}$ with $|T|$ being its $d$-dimensional volume. Note that $\gamma$-shape regularity implies that $h_T \le {\rm diam}(T) \le \gamma \, h_T$. 

For given $\mathcal{T}_\bullet$, let ${\mathcal{N}}_\bullet$ be the set of nodes and $\mathcal{E}_\bullet$ be the set of facets. 
For $E \in \mathcal{E}_\bullet$, we define $h_E :=|E|^{1/(d-1)}$ with $|\cdot|$ being the $d$-dimensional surface measure. Note that $h_E \simeq {\rm diam}(T)$ if $E \subset T \in \mathcal{T}_\bullet$, where the hidden constants depend only on $\gamma$. 
Finally, $\mathcal{E}_\bullet^\Omega$ denotes the set of all interior facets, i.e., $E \in \mathcal{E}_\bullet^\Omega\subset\mathcal{E}_\bullet$ satisfies that  $E = T \cap T'$ for certain simplices $T, T' \in \mathcal{T}_\bullet$.

Throughout, we employ newest vertex bisection (NVB) to refine triangulations locally; see~\cite{stevenson,kpp} for details on the refinement algorithm. 
Figure~\ref{fig:nvb} and Figure~\ref{fig:nvb3d} give an  illustration for $d=2$ and $d=3$, respectively . For a conforming triangulation $\mathcal{T}_\bullet$ and $\mathcal{M}_\bullet \subseteq \mathcal{T}_\bullet$, let $\mathcal{T}_\circ := {\rm nvb}(\mathcal{T}_\bullet, \mathcal{M}_\bullet)$ be the coarsest conforming triangulation such that all marked elements $T\in\mathcal{M}_\bullet$ have been refined, i.e., $\mathcal{M}_\bullet \subseteq \mathcal{T}_\bullet \backslash \mathcal{T}_\circ$.
We write $\mathcal{T}_\circ \in {\rm nvb}(\mathcal{T}_\bullet)$, if there exists $n\in\mathbb{N}_0$, conforming triangulations $\mathcal{T}_{(0)},\dots,\mathcal{T}_{(n)}$, and corresponding sets of marked elements $\mathcal{M}_j\subseteq\mathcal{T}_j$ such that
\begin{itemize}
\item $\mathcal{T}_\bullet = \mathcal{T}_{(0)}$,
\item $\mathcal{T}_{(j+1)} = {\rm nvb}(\mathcal{T}_{(j)},\mathcal{M}_{(j)})$ for all $j=0,\dots,n-1$,
\item $\mathcal{T}_\circ  = \mathcal{T}_{(n)}$,
\end{itemize}
i.e., $\mathcal{T}_\circ$ is obtained from $\mathcal{T}_\bullet$ by finitely many refinement steps.

The analysis of the $(h-h/2)$-type error estimators requires a stronger mesh-refinement. 
We suppose that we are given some initial conforming triangulation $\mathcal{T}_0$.
For $\mathcal{T}_\bullet\in{\rm nvb}(\mathcal{T}_0)$, 
let $\mathcal{T}_\circ := {\rm refine}(\mathcal{T}_\bullet, \mathcal{M}_\bullet)$ be an NVB refinement which satisfies:
\begin{itemize}
\item[\bf(M1)] There exists a uniform constant $C_{\rm son}>0$ such that $\#\set{T'\in\mathcal{T}_\circ}{T'\subseteq T}\le C_{\rm son}$ for all $T\in\mathcal{T}_\bullet$, i.e., the number of sons per element is uniformly bounded.
\item[\bf(M2)] If $\mathcal{T}_\blacktriangle\in{\rm nvb}(\mathcal{T}_0)$, $\mathcal{M}_\blacktriangle\subseteq\mathcal{T}_\blacktriangle$, and $\mathcal{T}_\vartriangle:={\rm refine}(\mathcal{T}_\blacktriangle,\mathcal{M}_\blacktriangle)$, it holds that 
\begin{align*}
\set{T'\in\mathcal{T}_\circ}{T'\subset T}=\set{T'\in\mathcal{T}_\vartriangle}{T'\subset T}\quad\text{for all }T\in\mathcal{M}_\bullet\cap\mathcal{M}_\blacktriangle,
\end{align*}
i.e., refinement of a marked element is independent of its neighbors.
\end{itemize}
Further, we suppose that it satisfies one of the following constrains:
\begin{itemize}
\item[\bf(M3)] All facets of $T \in \mathcal{M}_\bullet$ contain an interior node $z \in {\mathcal{N}}_\circ$.
\item[\bf(M3')] All facets of $T \in \mathcal{M}_\bullet$ as well as $T$ contain an interior node $z \in {\mathcal{N}}_\circ$.
\end{itemize}
As above, we let $\mathcal{T}_\circ \in {\rm refine}(\mathcal{T}_\bullet)$ be the set of all possible refinements.

For $d=2$, (M3) corresponds to refinement of marked elements $T \in \mathcal{M}_\bullet$ by at least 3  bisections, while (M3') follows from at least 5 bisections; cf.~Figure~\ref{fig:nvb}. 
Obviously, these refinements also satisfy (M1)--(M2); cf.~Figure~\ref{fig:nvb}.
For $d=3$, each $T={\rm conv}\{z_1,\dots,z_4\}\in\mathcal{T}_\bullet$ is assigned with a permutation $(z_{\pi(1)},z_{\pi(2)},z_{\pi(3)},z_{\pi(4)})$ of its nodes $\{z_1,z_2,z_3,z_4\}$  and a type $\tau\in\{0,1,2\}$; see Figure~\ref{fig:nvb3d}. 
To achieve (M3), one can bisect each marked element $T\in\mathcal{M}_\bullet$ depending on its type as follows:
\begin{itemize}
\item[$\boldsymbol{\tau=0}$:] First, bisect $T$ uniformly into 8 sons, then bisect all resulting sons which do not contain $z_{\pi(1)}$ nor $z_{\pi(4)}$, finally, bisect all resulting sons which either contain the two nodes $z_{\pi(2)}$ and $\frac{1}{2}(z_{\pi(1)}+z_{\pi(3)})$ or the two nodes $z_{\pi(3)}$ and $\frac{1}{2}(z_{\pi(2)}+z_{\pi(4)})$.
Altogether, $T$ is split into 18 sons with $14$ nodes.
\item[$\boldsymbol{\tau=1}$:] First, bisect $T$ uniformly into 8 sons, then bisect all resulting sons which do not contain $z_{\pi(1)}$ nor $z_{\pi(4)}$, finally, bisect all resulting sons which contain $z_{\pi(2)}$.
Altogether, $T$ is split into 18 sons with $14$ nodes.
\item[$\boldsymbol{\tau=2}$:] First, bisect $T$ uniformly into 8 sons, then bisect all resulting sons which do not contain $z_{\pi(1)}$ nor $z_{\pi(4)}$, finally, bisect all resulting sons  which contain $z_{\pi(2)}$.
Altogether, $T$ is split into 20 sons with $14$ nodes.
\end{itemize}
The resulting sons of $T$ are visualized in Figure~\ref{fig:bisec17}.
Note that the proposed strategy satisfies (M1)--(M2) with $C_{\rm son}=20$.

\begin{figure}[t]
 \centering
 \begin{minipage}{.3\textwidth}
 \centering
 \includegraphics[width=52mm]{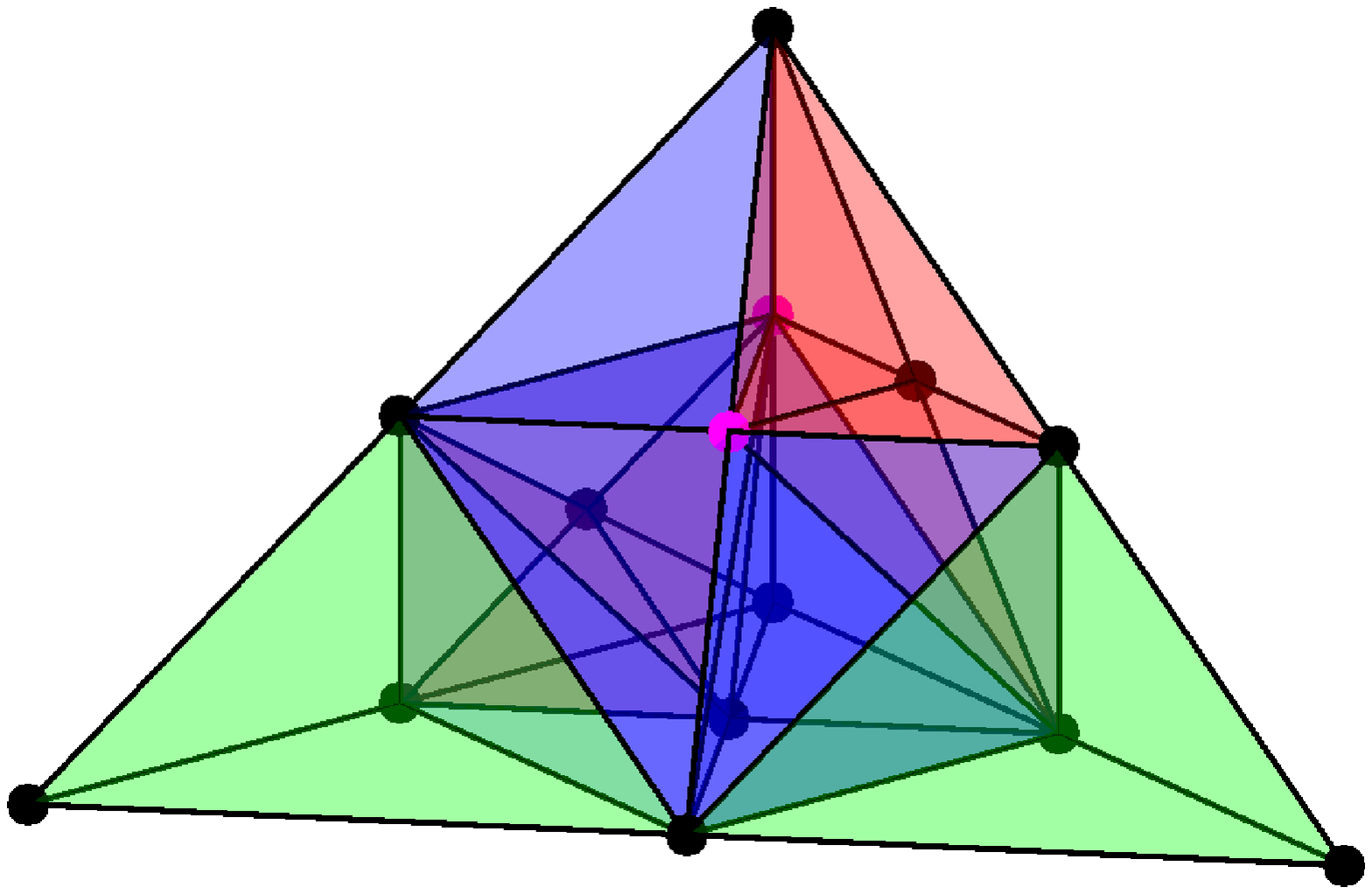}%
 \\[-8mm]%
 \hspace*{4mm} type $\tau = 0$
 \end{minipage}
 \begin{minipage}{.3\textwidth}
 \centering  
 \includegraphics[width=52mm]{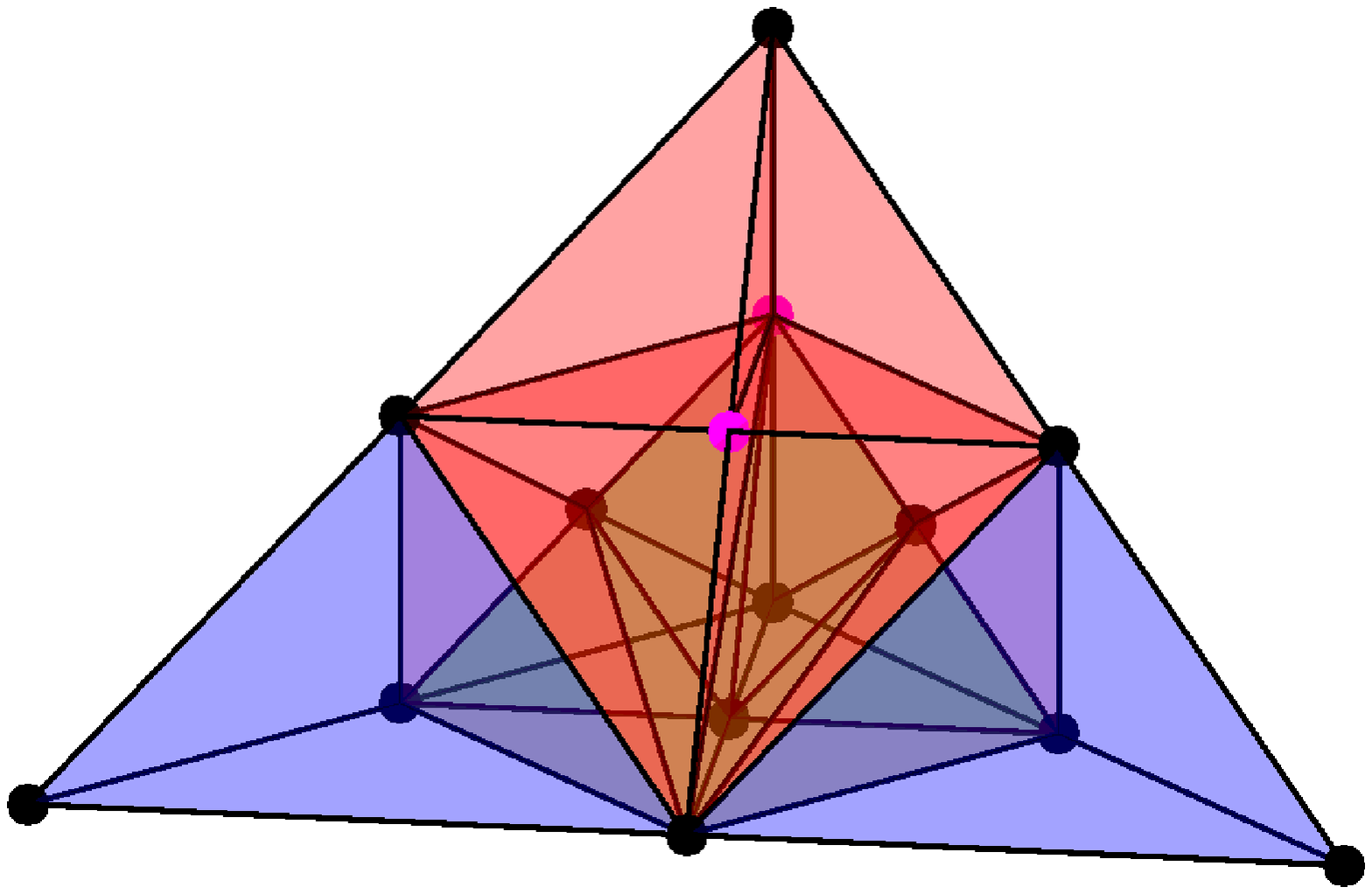}%
 \\[-8mm]%
 \hspace*{4mm} type $\tau = 1$
 \end{minipage}
 \begin{minipage}{.3\textwidth}
 \centering  
 \includegraphics[width=52mm]{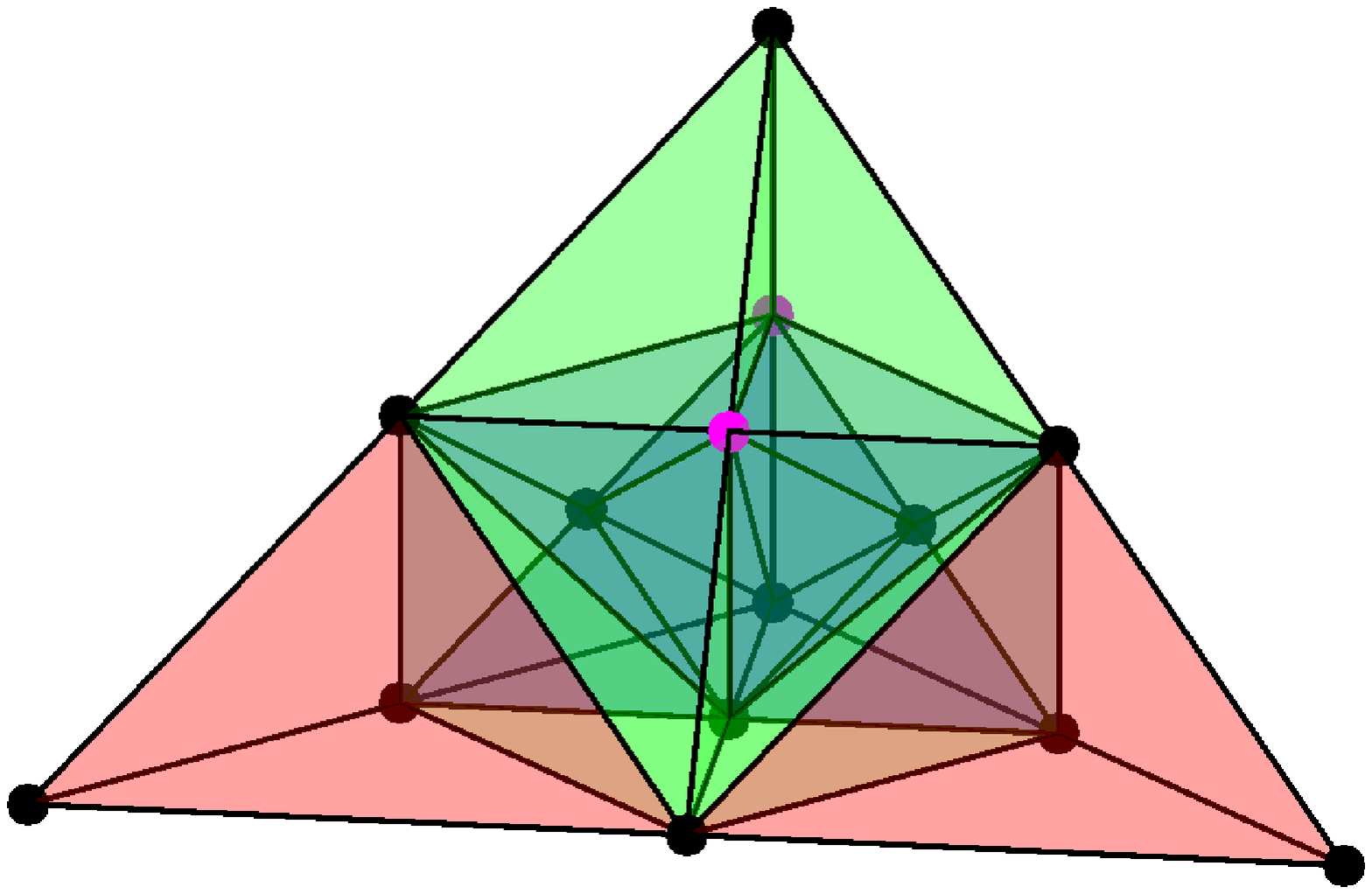}%
 \\[-8mm]%
 \hspace*{4mm} type $\tau = 2$
 \end{minipage}
 \caption{\small 
Starting with the same configuration for $T$ as in Figure~\ref{fig:nvb3d}, the resulting sons for the 3D refinement guaranteeing (M3) are depicted.
The outcome depends on the type $\tau$ of $T$.
The type of the sons is indicated by their color, green for $\tau=0$, blue for $\tau=1$, and red for $\tau=2$.
Finally, two nodes are highlighted in magenta. 
The product of their hat functions is a discrete element bubble function within $T$.}
  \label{fig:bisec17}
\end{figure}

\begin{remark}
{\rm(i)} 
We came up with this refinement by considering all possible configurations of the element $T$ in our MATLAB implementation of 3D NVB.
Indeed, it is sufficient to consider only 4 node permutations instead of $4!$, since the others can be obtained by rotating the element.
Hence, the number of all possible configurations is $4\cdot3=12$.
This refinement leads to 5 two-dimensional NVBs of each facet of $T$ as in Figure~\ref{fig:nvb}.
In particular, uniform refinement with $\mathcal{M}_\bullet=\mathcal{T}_\bullet$ leads to a conforming triangulation.
For $\mathcal{M}_\bullet\neq\mathcal{T}_\bullet$, further bisections are required to obtain conformity.
However, since uniform refinement automatically guarantees conformity, only non-marked elements have to be additionally bisected.

{\rm(ii)} 
Using our {\rm MATLAB} implementation of 3D NVB, we saw that it is not possible to satisfy {\rm(M3')} strictly in the sense that ${\rm refine}(\cdot)$ generates exactly one interior node  per facet and exactly one interior node in each marked element $T\in\mathcal{M}_\bullet$.
Indeed, this is only possible for $T$ being of type  $\tau\in\{0,2\}$,  while type $\tau=1$ enforces even three interior nodes on one facet, if interior nodes on each facet and inside of $T$ are generated.
\end{remark}

\subsection{Finite element method}
\label{section:fem}
The Lax--Milgram theorem proves existence and uniqueness of $u \in H^1_0(\Omega)$ with
\begin{align}\label{eq:weak}
 \int_\Omega \boldsymbol{A}\nabla u \cdot \nabla v \,dx
 = \int_\Omega f v \,dx
 \quad \text{for all } v \in H^1_0(\Omega),
\end{align}
which is the variational formulation of~\eqref{eq:strong}. 
Given a triangulation $\mathcal{T}_\bullet$ and $p\in\mathbb{N}$, define the space of $\mathcal{T}_\bullet$-piecewise polynomials
\begin{align}\label{eq:Pp}
 \mathcal{P}^p(\mathcal{T}_\bullet) := \set{v_\bullet \in L^2(\Omega)}{\forall T \in \mathcal{T}_\bullet\quad v_\bullet |_T \text{ is a polynomial of degree} \le p }.
\end{align}
Define ${\mathcal{S}}^p(\mathcal{T}_\bullet)
 := \mathcal{P}^p(\mathcal{T}_\bullet) \cap H^1(\Omega)
 = \mathcal{P}^p(\mathcal{T}_\bullet) \cap C(\Omega)$ as well as the $H^1$-conforming FE space 
\begin{align} \label{eq:discretespace}
 {\mathcal{S}}^p_0(\mathcal{T}_\bullet) 
 := \mathcal{P}^p(\mathcal{T}_\bullet) \cap H^1_0(\Omega) 
 = \set{ v_\bullet \in {\mathcal{S}}^p(\mathcal{T}_\bullet) }{ v_\bullet |_\Gamma = 0}.
\end{align}
The Lax--Milgram theorem proves existence and uniqueness of $u_\bullet \in {\mathcal{S}}^p_0(\mathcal{T}_\bullet) $ such that
\begin{align}\label{eq:discrete}
 \int_\Omega \boldsymbol{A}\nabla u_\bullet \cdot \nabla v_\bullet \,dx
 = \int_\Omega f v_\bullet \,dx
 \quad \text{for all } v_\bullet \in {\mathcal{S}}^p_0(\mathcal{T}_\bullet) .
\end{align}
Recall the Galerkin orthogonality 
\begin{align}\label{eq:galerkin}
 \int_\Omega \boldsymbol{A}\nabla (u-u_\bullet) \cdot \nabla v_\bullet\,dx = 0
 \quad \text{for all } v_\bullet \in {\mathcal{S}}^p_0(\mathcal{T}_\bullet),
\end{align}
which results in the Pythagoras theorem
\begin{align}\label{eq:pythagoras}
  \norm{\boldsymbol{A}^{1/2}\nabla (u \!-\! u_\bullet)}{\Omega}^2 + \norm{\boldsymbol{A}^{1/2}\nabla (u_\bullet \!-\! v_\bullet)}{\Omega}^2
 = \norm{\boldsymbol{A}^{1/2}\nabla (u \!-\! v_\bullet)}{\Omega}^2
 \quad \text{for all } v_\bullet \in {\mathcal{S}}^p_0(\mathcal{T}_\bullet).
\end{align}

\subsection{Simple $\boldsymbol{(h-h/2)}$-type error estimators}
\label{section:hh2}
Given a triangulation $\mathcal{T}_\bullet$, let $\widehat{\mathcal{T}}_\bullet := {\rm refine}(\mathcal{T}_\bullet,\mathcal{T}_\bullet)$ be the uniform refinement. 
To define the error estimators, we require the following three operators: Let $I_\bullet : C(\overline\Omega)\to {\mathcal{S}}^p(\mathcal{T}_\bullet)$ denote the nodal interpolation operator. 
Let $\pi_\bullet : L^2(\Omega) \to \mathcal{P}^{p-1}(\mathcal{T}_\bullet)$ be the $L^2(\Omega)$-orthogonal projection onto $\mathcal{P}^{p-1}(\mathcal{T}_\bullet)$.
Let $\Pi_\bullet : L^2(\Omega) \to \mathcal{P}^{\max\{p-2,0\}}(\mathcal{T}_\bullet)$ be the $L^2(\Omega)$-orthogonal projection onto $\mathcal{P}^{\max\{p-2,0\}}(\mathcal{T}_\bullet)$.
Recall the natural $h-h/2$ error estimator 
\begin{align*}
 \widetilde\mu_\bullet = \norm{\boldsymbol{A}^{1/2}\nabla(\widehat u_\bullet - u_\bullet)}{\Omega}.
\end{align*}
One drawback of $\widetilde\mu_\bullet$ is that it requires to compute two FE solutions $\widehat u_\bullet \in {\mathcal{S}}^p_0(\widehat{\mathcal{T}}_\bullet)$ and $u_\bullet \in {\mathcal{S}}^p_0(\mathcal{T}_\bullet)$, even though the Pythagoras theorem~\eqref{eq:pythagoras} predicts that
\begin{align}\label{eq:hh2:pythagoras}
 \norm{\boldsymbol{A}\nabla(u - \widehat u_\bullet)}{\Omega}^2 + \widetilde\mu_\bullet^{\,2}
 = \norm{\boldsymbol{A}^{1/2}\nabla(u - u_\bullet)}{\Omega}^2,
\end{align}
i.e., $\widehat u_\bullet$ is more accurate than $u_\bullet$. One remedy is to replace $u_\bullet$ by some (cheap) postprocessing of $\widehat u_\bullet$ as proposed in~\cite{fop}: 
Recalling the convention~\eqref{eq:convention}, we define, for all $T \in \mathcal{T}_\bullet$ and all $\widehat v_\bullet\in{\mathcal{S}}^p(\widehat{\mathcal{T}}_\bullet)$, 
\begin{align*}
 \mu_\bullet(T,\widehat v_\bullet) 
 &:= \norm{\boldsymbol{A}^{1/2}\nabla(1-I_\bullet)\widehat v_\bullet}{T}
 \quad \text{and} \quad
 \lambda_\bullet(T,\widehat v_\bullet) 
 := \norm{(1-\pi_\bullet)\boldsymbol{A}^{1/2}\nabla\widehat v_\bullet}{T}.
\end{align*}%
Since $\boldsymbol{A}^{1/2}$ is $\mathcal{T}_\bullet$-piecewise constant and $\pi_\bullet$ acts elementwise and componentwise, we immediately see the alternative representation
\begin{align}\label{eq:lambda2}
 \lambda_\bullet(T,\widehat v_\bullet) 
 = \norm{\boldsymbol{A}^{1/2}(1-\pi_\bullet)\nabla\widehat v_\bullet}{T}.
\end{align}
The following lemma is proved in~\cite[Prop.~3]{fop} for $p=1$ and the Poisson model problem by use of scaling arguments, but also holds for general $p\ge1$ and our model problem \eqref{eq:strong}.

\begin{lemma}[simple \!$\boldsymbol{(h-h/2)}$-type error estimators]\label{lem:simple}
There exists $C_{\rm hh2}\ge1$ such that there holds local equivalence
\begin{align}\label{eq1:hh2:equivalent}
 \lambda_\bullet(T,\widehat v_\bullet)
 \le \mu_\bullet(T,\widehat v_\bullet) \le C_{\rm hh2} \, \lambda_\bullet(T,\widehat v_\bullet)
 \quad \text{for all }
 T \in \mathcal{T}_\bullet
 \text{ and all }
 \widehat v_\bullet \in {\mathcal{S}}_0^p(\mathcal{T}_\bullet).
\end{align}
Moreover, for $\widehat v_\bullet = \widehat u_\bullet$, there holds global equivalence
\begin{align}\label{eq2:hh2:equivalent}
 \lambda_\bullet(\widehat u_\bullet)
 \le \widetilde\mu_\bullet
 = \norm{\boldsymbol{A}^{1/2}\nabla (\widehat u_\bullet - u_\bullet)}\Omega
 \le \norm{\boldsymbol{A}^{1/2}\nabla (1-I_\bullet) \widehat u_\bullet}\Omega = \mu_\bullet(\widehat u_\bullet)
 \le C_{\rm hh2} \, \lambda_\bullet(\widehat u_\bullet)
\end{align}
as well as efficiency
\begin{align}\label{eq:hh2:efficient}
 C_{\rm hh2}^{-1} \, \mu_\bullet(\widehat u_\bullet) 
 \le \lambda_\bullet(\widehat u_\bullet) 
 \le \widetilde\mu_\bullet
 \le \norm{\boldsymbol{A}^{1/2}\nabla(u-u_\bullet)}{\Omega}.
\end{align}%
The constant $C_{\rm hh2}$ depends only on $d$, $\boldsymbol{A}$, $p$, and $\gamma$-shape regularity of $\mathcal{T}_\bullet$.
\end{lemma}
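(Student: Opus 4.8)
The plan is to follow the scaling strategy of \cite[Prop.~3]{fop}, splitting the argument into an elementary lower bound that holds elementwise and a finite-dimensionality/scaling argument for the reverse estimate; the global statements in \eqref{eq2:hh2:equivalent}--\eqref{eq:hh2:efficient} will then follow from the local ones together with the Galerkin structure and the Pythagoras identity \eqref{eq:pythagoras}.

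\emph{Lower bound.} Since $\mathcal{T}_\bullet\in{\rm nvb}(\mathcal{T}_0)$ refines the partition on which $\boldsymbol{A}$ is piecewise constant, $\boldsymbol{A}^{1/2}$ is constant on each $T\in\mathcal{T}_\bullet$, and $(1-I_\bullet)\widehat v_\bullet|_T$ depends only on $\widehat v_\bullet|_T$ because the nodal interpolant is local. As $I_\bullet\widehat v_\bullet|_T\in\mathcal{P}^p(T)$, the vector field $\boldsymbol{A}^{1/2}\nabla(I_\bullet\widehat v_\bullet)|_T$ has components in $\mathcal{P}^{p-1}(T)$; since $1-\pi_\bullet$ realizes the best $L^2(T)$-approximation error in $\mathcal{P}^{p-1}(T)$, using \eqref{eq:lambda2} with this competitor yields $\lambda_\bullet(T,\widehat v_\bullet)\le\mu_\bullet(T,\widehat v_\bullet)$, i.e.\ the left inequality in \eqref{eq1:hh2:equivalent}; squaring and summing over $T\in\mathcal{T}_\bullet$ gives $\lambda_\bullet(\widehat v_\bullet)\le\mu_\bullet(\widehat v_\bullet)$. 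Taking instead the competitor $\boldsymbol{A}^{1/2}\nabla u_\bullet\in\mathcal{P}^{p-1}(\mathcal{T}_\bullet)$ (legitimate as $u_\bullet\in{\mathcal{S}}^p_0(\mathcal{T}_\bullet)$ and $\boldsymbol{A}^{1/2}$ is $\mathcal{T}_\bullet$-piecewise constant) proves $\lambda_\bullet(\widehat u_\bullet)\le\norm{\boldsymbol{A}^{1/2}\nabla(\widehat u_\bullet-u_\bullet)}{\Omega}=\widetilde\mu_\bullet$, the first inequality in \eqref{eq2:hh2:equivalent}.

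\emph{From the local upper bound to the remaining estimates.} Comparing \eqref{eq:discrete} on $\mathcal{T}_\bullet$ and on $\widehat{\mathcal{T}}_\bullet$ and using ${\mathcal{S}}^p_0(\mathcal{T}_\bullet)\subseteq{\mathcal{S}}^p_0(\widehat{\mathcal{T}}_\bullet)$ shows that $u_\bullet$ is the $\boldsymbol{A}$-energy projection of $\widehat u_\bullet$ onto ${\mathcal{S}}^p_0(\mathcal{T}_\bullet)$; since $I_\bullet\widehat u_\bullet\in{\mathcal{S}}^p_0(\mathcal{T}_\bullet)$ (because $\widehat u_\bullet$ vanishes at all Lagrange nodes on $\Gamma$), the best-approximation property gives $\widetilde\mu_\bullet\le\norm{\boldsymbol{A}^{1/2}\nabla(1-I_\bullet)\widehat u_\bullet}{\Omega}=\mu_\bullet(\widehat u_\bullet)$, which is the middle inequality of \eqref{eq2:hh2:equivalent}; the last inequality $\mu_\bullet(\widehat u_\bullet)\le C_{\rm hh2}\lambda_\bullet(\widehat u_\bullet)$ then follows by squaring and summing the local upper bound (proved below) over $T\in\mathcal{T}_\bullet$. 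Finally, \eqref{eq:hh2:efficient} follows by combining \eqref{eq2:hh2:equivalent} with the Pythagoras identity \eqref{eq:pythagoras} for $\widehat{\mathcal{T}}_\bullet$ evaluated at $v=u_\bullet\in{\mathcal{S}}^p_0(\mathcal{T}_\bullet)\subseteq{\mathcal{S}}^p_0(\widehat{\mathcal{T}}_\bullet)$, which yields $\widetilde\mu_\bullet\le\norm{\boldsymbol{A}^{1/2}\nabla(u-u_\bullet)}{\Omega}$.

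\emph{Local upper bound and the main obstacle.} It remains to show $\mu_\bullet(T,\widehat v_\bullet)\le C_{\rm hh2}\,\lambda_\bullet(T,\widehat v_\bullet)$ for all $T\in\mathcal{T}_\bullet$ and all $\widehat v_\bullet\in{\mathcal{S}}^p(\widehat{\mathcal{T}}_\bullet)$. Fix $T$, write $\boldsymbol{A}_T:=\boldsymbol{A}|_T$, and let $V_T$ be the finite-dimensional space of restrictions $\widehat v_\bullet|_T$. Since $I_\bullet$ restricted to $T$ is a linear projection onto $\mathcal{P}^p(T)$, one has $V_T=\mathcal{P}^p(T)\oplus(1-I_\bullet)V_T$, and both $\mu_\bullet(T,\cdot)$ and $\lambda_\bullet(T,\cdot)$ vanish on $\mathcal{P}^p(T)$, so it suffices to compare them on $(1-I_\bullet)V_T$. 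There $\mu_\bullet(T,\cdot)=\norm{\boldsymbol{A}_T^{1/2}\nabla\,\cdot\,}{T}$ is a norm; $\lambda_\bullet(T,\cdot)$ is a norm as well, since $\lambda_\bullet(T,w)=0$ forces $\boldsymbol{A}_T^{1/2}\nabla w$, hence $\nabla w$, to be a curl-free field with components in $\mathcal{P}^{p-1}(T)$, so $w$ differs from a polynomial of degree $\le p$ by a constant (by continuity on the connected $T$), i.e.\ $w\in\mathcal{P}^p(T)\cap(1-I_\bullet)V_T=\{0\}$. Equivalence of norms on finite-dimensional spaces then gives $\mu_\bullet(T,\cdot)\le C_T\lambda_\bullet(T,\cdot)$ with some $C_T\ge1$. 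The main obstacle is the uniformity $C_{\rm hh2}:=\sup_T C_T<\infty$: one uses that the ratio $\mu_\bullet(T,\widehat v_\bullet)/\lambda_\bullet(T,\widehat v_\bullet)$ is invariant under dilations $x\mapsto\rho x$ (both quantities scale like $\rho^{(d-2)/2}$), and that, after this normalization and an affine change of variables, the configuration $\big(T,\set{T'\in\widehat{\mathcal{T}}_\bullet}{T'\subseteq T},\boldsymbol{A}_T\big)$ varies in a compact family — newest vertex bisection from the fixed $\mathcal{T}_0$ admits only finitely many combinatorial element and refinement types, $\gamma$-shape regularity confines the normalized simplices to a compact set, and $\boldsymbol{A}$ attains only finitely many values — on which the best constant in the equivalence of the two configuration-dependent norms on the fixed finite-dimensional reference space depends continuously and is therefore bounded. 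This yields the claimed dependence of $C_{\rm hh2}$ on $d$, $\boldsymbol{A}$, $p$, and $\gamma$ only. The case $p=1$ with $\boldsymbol{A}=\mathrm{Id}$ is \cite[Prop.~3]{fop}; the only modifications are to carry the constant matrix $\boldsymbol{A}_T^{1/2}$ and the general degree $p$ through the scaling argument, which leave its structure intact because $\boldsymbol{A}_T^{1/2}$ is constant and $\nabla\mathcal{P}^p(T)\subseteq\mathcal{P}^{p-1}(T)$.
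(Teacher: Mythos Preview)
Your proof is correct and follows essentially the same route as the paper's: the lower bound via the best-approximation property of $\pi_\bullet$, the middle inequality of \eqref{eq2:hh2:equivalent} via the Galerkin best-approximation of $\widehat u_\bullet$ by $u_\bullet$, efficiency via Pythagoras, and the key upper bound in \eqref{eq1:hh2:equivalent} via equivalence of (semi)norms on a finite-dimensional space combined with a scaling argument. Your presentation differs only cosmetically---you pass to the complement $(1-I_\bullet)V_T$ and argue that both quantities are genuine norms there, whereas the paper phrases this as ``seminorms with coinciding kernels are equivalent''---and you spell out the compactness behind the scaling argument more explicitly than the paper's one-line ``a scaling argument proves\ldots''.
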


\begin{proof}[Sketch of proof]
Note that $\boldsymbol{A}^{1/2}\nabla I_\bullet\widehat v_\bullet, \boldsymbol{A}^{1/2}\nabla u_\bullet \in \mathcal{P}^{p-1}(\mathcal{T}_\bullet)$ and that $\pi_\bullet$ is also the $\mathcal{T}_\bullet$-elementwise best approximation onto $\mathcal{P}^{p-1}(\mathcal{T}_\bullet)$. This proves the first estimate in~\eqref{eq1:hh2:equivalent} as well as the first estimate in~\eqref{eq2:hh2:equivalent}. The second estimate in~\eqref{eq2:hh2:equivalent} follows from $\mathcal{P}^p(\mathcal{T}_\bullet) \subseteq \mathcal{P}^p(\widehat{\mathcal{T}}_\bullet)$ and the best approximation property of the Galerkin solution in the energy norm, since $u_\bullet$ is also a Galerkin approximation to $\widehat u_\bullet$.
Since~\eqref{eq:hh2:efficient} is a direct consequence of~\eqref{eq:hh2:pythagoras} and~\eqref{eq2:hh2:equivalent},
it only remains to prove the second estimate in~\eqref{eq1:hh2:equivalent}, which also implies the third estimate in~\eqref{eq2:hh2:equivalent}. 

Let $T \in \mathcal{T}_\bullet$. Note that $\mu_\bullet(T,\cdot)$ and $\lambda_\bullet(T,\cdot)$ are seminorms on $\mathcal{P}^{p-1}(\widehat{\mathcal{T}}_\bullet)$. Recall that seminorms on finite-dimensional spaces are equivalent if the kernels coincide. For $\widehat v_\bullet \in \mathcal{P}^{p-1}(\widehat{\mathcal{T}}_\bullet)$, it holds that $\mu_\bullet(T,\widehat v_\bullet)=0$ and $\lambda_\bullet(T,\widehat v_\bullet) = 0$,  if and only if $\widehat v_\bullet|_T \in \mathcal{P}^{p-1}(\{T\})$. Hence, we derive the equivalence~\eqref{eq1:hh2:equivalent}. A scaling argument proves that the constant $C_{\rm hh2}$ depends only on $d$, $\boldsymbol{A}$,  $p$, and $\gamma$-shape regularity of $\mathcal{T}_\bullet$, while $C_{\rm hh2}\ge1$ is obvious.
\end{proof}

With the convention~\eqref{eq:convention}, we define, for all $T\in\mathcal{T}_\bullet$ and all  $\widehat v_\bullet \in {\mathcal{S}}_0^p(\widehat{\mathcal{T}}_\bullet)$,
\begin{subequations}\label{eq:osc}
\begin{align*}
 &{\rm res}_\bullet(T,\widehat v_\bullet)^2 
 := h_T^2 \, \sum_{\substack{T' \in \widehat{\mathcal{T}}_\bullet \\ T' \subseteq T}} \norm{f+{\rm div}(\boldsymbol{A}\nabla \widehat v_\bullet)}{T'}^2,\\ 
&{\rm osc}_\bullet(T)^2 
 := h_T^2 \, \norm{(1-\pi_\bullet)f}{T}^2
 \quad\text{and}\quad
 {\rm apx}_\bullet(T)^2:= h_T^2 \, \norm{(1-\Pi_\bullet)f}{T}^2 ,
\end{align*}
\end{subequations}
where ${\rm apx}_\bullet(T)$ is only required for $p\ge2$.
Further, we abbreviate ${\rm osc}_\bullet^2:=\sum_{T\in\mathcal{T}_\bullet}{\rm osc}_\bullet(T)^2$  as well as ${\rm apx}_\bullet^2:=\sum_{T\in\mathcal{T}_\bullet}{\rm apx}_\bullet(T)^2$.
Note that ${\rm res}_\bullet(T,v_\bullet)^2 = h_T^2 \, \norm{f+ {\rm div}(\boldsymbol{A}\nabla  v_\bullet)}{T}^2$ for all $v_\bullet \in {\mathcal{S}}^p_0(\mathcal{T}_\bullet)$.
Then, we consider the following {\sl a~posteriori} error estimators
\begin{align}\label{table}
\begin{tabular}{|rcl|c|c|c|}
\hline
\multicolumn{3}{|c|}{$\eta_\bullet(T,\widehat v_\bullet)^2 :=$} & \multicolumn{2}{|c|}{requirements} & refinement 
\\
\hline
\hline
$\lambda_\bullet(T,\widehat v_\bullet)^2$ & $+$ & ${\rm res}_\bullet(T,\widehat v_\bullet)^2$ & $p\ge1$ & $d\ge2$ & (M3) \\
$\lambda_\bullet(T,\widehat v_\bullet)^2$ & $+$ & ${\rm osc}_\bullet(T)^2$ & $p\ge1$ & $d\ge2$ & (M3') \\
$\lambda_\bullet(T,\widehat v_\bullet)^2$ & $+$ & ${\rm apx}_\bullet(T)^2$ & $p\ge2$ & $d\in\{2,3\}$ & (M3) \\
\hline
\hline
$\mu_\bullet(T,\widehat v_\bullet)^2$ & $+$ & ${\rm res}_\bullet(T,\widehat v_\bullet)^2$ & $p\ge1$ & $d\ge2$ & (M3) \\
$\mu_\bullet(T,\widehat v_\bullet)^2$ & $+$ & ${\rm osc}_\bullet(T)^2$ & $p\ge1$ & $d\ge2$ & (M3') \\
$\mu_\bullet(T,\widehat v_\bullet)^2$ & $+$ & ${\rm apx}_\bullet(T)^2$ & $p\ge2$ & $d\in\{2,3\}$ & (M3) \\
\hline
\end{tabular}
\end{align}

\begin{remark}
We note that, for $p\ge 2$, in contrast to the oscillation terms ${\rm osc}_\bullet$, the data approximation terms  $ {\rm apx}_\bullet$ are in general not of higher, but of the same order as the discretization error $\norm{\boldsymbol{A}^{1/2}\nabla(u - u_\bullet)}{\Omega}$. 
%
\end{remark}

\subsection{Adaptive algorithm}
We analyze the following adaptive strategy which is driven by one of the error estimators $\eta_\bullet$ from~\eqref{table}.

\begin{algorithm}\label{algorithm}
{\bfseries Input:} Conforming triangulation $\mathcal{T}_0$ of $\Omega$, adaptivity parameter $0<\theta\le1$.\\
{\bfseries Loop:} For all $\ell = 0,1,2,\dots$, iterate the following steps~{\rm(i)--(iv)}:
\begin{itemize}
\item[\rm(i)] Compute the discrete solution $\widehat u_\ell \in {\mathcal{S}}^p_0(\widehat{\mathcal{T}}_\ell)$,
where $\widehat{\mathcal{T}}_\ell := {\rm refine}(\mathcal{T}_\ell,\mathcal{T}_\ell)$.
\item[\rm(ii)] Compute the indicators $\eta_\ell(T,\widehat u_\ell)$ for all $T \in \mathcal{T}_\ell$.
\item[\rm(iii)] Determine some $\mathcal{M}_\ell \subseteq \mathcal{T}_\ell$ with minimal cardinality such that 
$\theta \, \eta_\ell(\widehat u_\ell)^2 \le \eta_\ell(\mathcal{M}_\ell, \widehat u_\ell)^2$.
\item[\rm(iv)] Generate $\mathcal{T}_{\ell+1} := {\rm refine}(\mathcal{T}_\ell,\mathcal{M}_\ell)$.
\end{itemize}
{\bfseries Output:} Sequences of successively refined triangulations $\mathcal{T}_\ell$, discrete solutions $\widehat u_\ell$, and corresponding error estimators $\eta_\ell(\widehat u_\ell)$, for all $\ell \ge 0$.
\end{algorithm}%

\subsection{Main result}
Given the initial triangulation $\mathcal{T}_0$, we define the following two approximation classes for $s>0$:
With the error estimator $\eta_\bullet$ from~\eqref{table} used for Algorithm~\ref{algorithm} and the convention~\eqref{eq:convention}, let
\begin{align}\label{eq:As}
\norm{u}{\mathbb{A}_s^{\eta}}
 := \sup_{N\in\mathbb{N}_0}\min_{\substack{\mathcal{T}_\bullet\in{\rm nvb}(\mathcal{T}_0)\\\#\mathcal{T}_\bullet - \#\mathcal{T}_0\le N}}(N+1)^s\,\eta_\bullet(\widehat u_\bullet) 
 \in [0,\infty].
\end{align}
Moreover, let
\begin{align}\label{eq:As tot}
 \norm{u}{\mathbb{A}_s^{\rm tot}}
 := \sup_{N\in\mathbb{N}_0} \min_{\substack{\mathcal{T}_\bullet\in{\rm nvb}(\mathcal{T}_0)\\\#\mathcal{T}_\bullet - \#\mathcal{T}_0\le N}} (N+1)^s\, \big(\min_{v_\bullet \in {\mathcal{S}}^p_0(\mathcal{T}_\bullet)} \norm{\boldsymbol{A}^{1/2}\nabla (u - v_\bullet)}{\Omega}+{{\rm osc}_\bullet} \big)
  \in [0,\infty].
\end{align}
Note that the definition of $\norm{u}{\mathbb{A}_s^{\rm tot}}$ is independent of the error estimator $\eta_\bullet$.

By definition, $\norm{u}{\mathbb{A}^{\rho}_s}<\infty$ and $\norm{u}{\mathbb{A}^{\rm tot}_s}<\infty$ imply that the quantity $\eta_\bullet(\widehat u_\bullet)$ and the \textit{total error} on the optimal meshes $\mathcal{T}_\bullet$ decay at least with rate ${\mathcal{O}}\big((\#\mathcal{T}_\bullet)^{-s}\big)$.  The following main theorem states that each possible rate $s>0$ is in fact realized by Algorithm~\ref{algorithm}.
The proof requires some technical preparations and is thus postponed to Section~\ref{section:proof}.

\begin{theorem}\label{thm:abstract}
Let $\eta_\bullet$ be one of the error estimators from~\eqref{table}. 
Then, the error estimator $\eta_\bullet(\widehat u_\bullet)$ is  reliable and efficient, i.e., there exist constants $C_{\rm eff},C_{\rm rel}>0$ such that  
\begin{align}\label{eq:reliable}
 C_{\rm eff}^{-1} \, \eta_\bullet(\widehat u_\bullet)
 \le \! \Big( \! \min_{v_\bullet \in {\mathcal{S}}^p_0(\mathcal{T}_\bullet)} \! \norm{\boldsymbol{A}^{1/2}\nabla (u - v_\bullet)}{\Omega}^2 + {\rm osc}_\bullet^2 \Big)^{1/2} \!\!
 \le C_{\rm rel} \, \eta_\bullet(\widehat u_\bullet)\text{ for }\mathcal{T}_\bullet\in{\rm nvb}(\mathcal{T}_0).
\end{align}
In particular, this implies that 
\begin{align}\label{eq:approximationclass}
 C_{\rm rel}^{-1} \, \norm{u}{\mathbb{A}_s^{\rm tot}}
 \le \norm{u}{\mathbb{A}_s^{\eta}}
 \le C_{\rm eff} \, \norm{u}{\mathbb{A}_s^{\rm tot}}.
\end{align}%
For arbitrary $0<\theta\le1$, the error estimator sequence generated by Algorithm~\ref{algorithm} converges linearly, i.e., there exist constants $0<q_{\rm lin}<1$ and $C_{\rm lin}\ge1$ such that
\begin{align}\label{eq:linear}
\eta_{\ell+n}(\widehat u_{\ell+j})\le C_{\rm lin}q_{\rm lin}^j \eta_\ell(\widehat u_\ell)\quad\text{for all }\ell,n\in\mathbb{N}_0.
\end{align}
Moreover, there exists a constant $0<\theta_{\rm opt}<1$ such that for all $0<\theta<\theta_{\rm opt}$, the estimator  $\eta_\ell(\widehat u_\ell)$ converges at optimal algebraic  rate, i.e., for all $s>0$ there exist constants $c_{\rm opt},C_{\rm opt}>0$ such that
\begin{align}\label{eq:optimal}
 c_{\rm opt}\norm{u}{\mathbb{A}_s^{\eta}}
 \le \sup_{\ell\in\mathbb{N}_0}{(\# \mathcal{T}_\ell-\#\mathcal{T}_0+1)^{s}}\,{\eta_\ell(\widehat u_\ell)}
 \le C_{\rm opt}\norm{u}{\mathbb{A}_s^{\eta}}.
\end{align}
\noindent All involved constants $C_{\rm rel}$, $C_{\rm eff}$, $C_{\rm lin}$, $q_{\rm lin}$, and $\theta_{\rm opt}$ depend only on $d$, $\boldsymbol{A}$, $p$, $C_{\rm son}$, and $\gamma$-shape regularity of $\mathcal{T}_0$, whereas $C_{\rm lin}$ and $q_{\rm lin}$ depend additionally on $\theta$, and $C_{\rm opt}$ depends furthermore on $s$.
The constant $c_{\rm opt}$ depends only on $C_{\rm son}$, $\#\mathcal{T}_0$, and  $s$.
\end{theorem}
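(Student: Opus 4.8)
\emph{Strategy.} The four assertions are proved in the order \eqref{eq:reliable}, \eqref{eq:approximationclass}, \eqref{eq:linear}, \eqref{eq:optimal}. The organizing principle is that $\eta_\bullet(\widehat u_\bullet)$ is \emph{globally} equivalent to the classical residual error estimator $\varrho_\bullet(u_\bullet)$ on $\mathcal{T}_\bullet$ (with volume residuals $h_T\,\norm{f+{\rm div}(\boldsymbol A\nabla u_\bullet)}{T}$ and normal-jump contributions $h_E^{1/2}\,\norm{[\boldsymbol A\nabla u_\bullet\cdot n]}{E}$), so that the mature adaptivity analysis for residual estimators~\cite{ckns,ffp14,axioms} can be transported. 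The decisive auxiliary tools are the sharpened discrete reliability and discrete efficiency of $\varrho_\bullet$ collected in Section~\ref{section:residual}; these apply because $\widehat{\mathcal{T}}_\bullet={\rm refine}(\mathcal{T}_\bullet,\mathcal{T}_\bullet)$ is a \emph{uniform} (M3)- resp.\ (M3')-refinement, so that every $T\in\mathcal{T}_\bullet$ (and its neighbours) has the inner-node property in $\widehat{\mathcal{T}}_\bullet$. The bridge between the two estimators is the pointwise identity
\begin{align*}
 \lambda_\bullet(T,\widehat u_\bullet)=\norm{(1-\pi_\bullet)\boldsymbol A^{1/2}\nabla(\widehat u_\bullet-u_\bullet)}{T}\le\norm{\boldsymbol A^{1/2}\nabla(\widehat u_\bullet-u_\bullet)}{T}\qquad(T\in\mathcal{T}_\bullet),
\end{align*}
valid since $\boldsymbol A^{1/2}\nabla u_\bullet|_T\in\mathcal{P}^{p-1}(\{T\})$ and $\pi_\bullet$ acts elementwise.

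\emph{Step 1: reliability, efficiency, approximation classes.} By the Galerkin orthogonality \eqref{eq:pythagoras}, the minimum in \eqref{eq:reliable} equals $\norm{\boldsymbol A^{1/2}\nabla(u-u_\bullet)}{\Omega}$. Classical residual analysis gives $\norm{\boldsymbol A^{1/2}\nabla(u-u_\bullet)}{\Omega}\lesssim\varrho_\bullet(u_\bullet)$ and $\varrho_\bullet(u_\bullet)\lesssim\norm{\boldsymbol A^{1/2}\nabla(u-u_\bullet)}{\Omega}+{\rm osc}_\bullet$ (resp.\ $+{\rm apx}_\bullet$ for $p\ge2$). For the lower bound in \eqref{eq:reliable} we combine the discrete efficiency of Section~\ref{section:residual}, $\varrho_\bullet(T,u_\bullet)\lesssim\norm{\boldsymbol A^{1/2}\nabla(\widehat u_\bullet-u_\bullet)}{T}+{\rm osc}_\bullet(T)$ (resp.\ $+{\rm apx}_\bullet(T)$, resp.\ with the volume residual absorbed into ${\rm res}_\bullet$), with Lemma~\ref{lem:simple} ($\widetilde\mu_\bullet\le\mu_\bullet(\widehat u_\bullet)\le C_{\rm hh2}\,\lambda_\bullet(\widehat u_\bullet)$) and the identity above, summing over $T\in\mathcal{T}_\bullet$ and using ${\rm osc}_\bullet(T)\le h_T\,\norm{f+{\rm div}(\boldsymbol A\nabla u_\bullet)}{T}\le\varrho_\bullet(T,u_\bullet)$ (the ${\rm div}$-term has degree $\le p-1$). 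This yields the global equivalence $\eta_\bullet(\widehat u_\bullet)\simeq\varrho_\bullet(u_\bullet)$, hence \eqref{eq:reliable}; for the $\eta$-variants containing ${\rm res}_\bullet$ one additionally notes that ${\rm res}_\bullet(\widehat u_\bullet)$ is comparable to the volume residuals of the residual estimator on $\widehat{\mathcal{T}}_\bullet$, which are efficient because $\norm{\boldsymbol A^{1/2}\nabla(u-\widehat u_\bullet)}{\Omega}\le\norm{\boldsymbol A^{1/2}\nabla(u-u_\bullet)}{\Omega}$. Inserting \eqref{eq:reliable} into the definitions \eqref{eq:As}--\eqref{eq:As tot} — taking minima over admissible $\mathcal{T}_\bullet\in{\rm nvb}(\mathcal{T}_0)$ and the supremum over $N$ — immediately gives \eqref{eq:approximationclass}.

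\emph{Step 2: linear convergence and optimal rates.} We invoke the abstract framework of~\cite{ckns,ffp14,axioms} for the residual estimator $\varrho_\bullet(u_\bullet)$ along the nested adaptive meshes $\mathcal{T}_\ell$: stability on non-refined elements, estimator reduction on refined elements, reliability (Step~1), the improved discrete reliability of Section~\ref{section:residual}, and the orthogonality furnished by the Pythagoras theorem \eqref{eq:pythagoras} for the nested spaces ${\mathcal{S}}^p_0(\mathcal{T}_\ell)$ are all available. The single non-standard ingredient is that Algorithm~\ref{algorithm} marks a D\"orfler set $\mathcal{M}_\ell$ with respect to $\eta_\ell(\widehat u_\ell)$, not $\varrho_\ell(u_\ell)$. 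To transfer the marking we combine the global equivalence of Step~1 with the pointwise bound $\eta_\bullet(T,\widehat u_\bullet)^2\lesssim\varrho_\bullet(T,u_\bullet)^2+\norm{\boldsymbol A^{1/2}\nabla(\widehat u_\bullet-u_\bullet)}{T}^2$ (from the $\lambda_\bullet$-identity and, for ${\rm res}_\bullet$, from $f+{\rm div}(\boldsymbol A\nabla\widehat u_\bullet)=(f+{\rm div}(\boldsymbol A\nabla u_\bullet))+{\rm div}(\boldsymbol A\nabla(\widehat u_\bullet-u_\bullet))$ together with an inverse estimate) and the discrete efficiency of Section~\ref{section:residual}; the outcome is that $\mathcal{M}_\ell$, enlarged by a uniformly bounded neighbourhood $\mathcal{R}_\ell$ with $\#\mathcal{R}_\ell\lesssim\#\mathcal{M}_\ell$, is a D\"orfler set for $\varrho_\ell(u_\ell)$ with parameter $\theta'\simeq\theta$. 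The abstract theory then yields linear convergence of $\varrho_\ell(u_\ell)$ for every $0<\theta\le1$ and optimal rates for $\theta<\theta_{\rm opt}$, and the global equivalence together with \eqref{eq:approximationclass} transports both statements to $\eta_\ell(\widehat u_\ell)$, i.e.\ \eqref{eq:linear} and \eqref{eq:optimal}. The stated dependence of the constants follows by tracking them through Lemma~\ref{lem:simple}, Section~\ref{section:residual}, and the abstract theorems, where the NVB closure and cardinality estimates are what make $c_{\rm opt}$ depend only on $C_{\rm son}$, $\#\mathcal{T}_0$, and $s$.

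\emph{Main obstacle.} The crux is precisely that $\eta_\bullet$ is \emph{only globally, not locally}, equivalent to $\varrho_\bullet(u_\bullet)$: the $\lambda_\bullet$- resp.\ $\mu_\bullet$-part carries no mesh-size weight and hence does not obey an element-wise reduction estimate, so the CKNS-type contraction argument cannot be run on $\eta_\bullet$ itself; moreover the normal jumps of $\boldsymbol A\nabla u_\bullet$ over $\mathcal{T}_\bullet$-facets that drive $\varrho_\bullet(u_\bullet)$ are not visible locally in $\eta_\bullet$. The way out is the pointwise identity for $\lambda_\bullet$ combined with the sharpened, essentially \emph{patch-free} discrete reliability and efficiency of Section~\ref{section:residual}: only with the patch-free versions does the marking transfer $\eta_\bullet\rightsquigarrow\varrho_\bullet$ preserve $\#\mathcal{R}_\ell\lesssim\#\mathcal{M}_\ell$, which is exactly what \eqref{eq:optimal} requires. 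Establishing these sharpened estimates, and verifying that the uniform (M3)/(M3')-refinement really endows $\widehat{\mathcal{T}}_\bullet$ with the inner-node property in the required generality (in particular for $d=3$, cf.\ Figure~\ref{fig:bisec17}), is the technical heart of the proof.
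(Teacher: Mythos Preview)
Your Step~1 (global equivalence $\eta_\bullet(\widehat u_\bullet)\simeq\varrho_\bullet(u_\bullet)$, hence \eqref{eq:reliable} and \eqref{eq:approximationclass}) is essentially what the paper does, and is fine.

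Your Step~2, however, has a genuine gap, and its ``Main obstacle'' paragraph misdiagnoses the situation. The transfer of D\"orfler marking from $\eta_\ell$ to $\varrho_\ell$ would require a \emph{local} bound of the form $\norm{\boldsymbol A^{1/2}\nabla(\widehat u_\ell-u_\ell)}{T}\lesssim\varrho_\ell(\omega(T),u_\ell)$ on the individual elements $T\in\mathcal{M}_\ell$. The discrete efficiency of Section~\ref{section:residual} goes in the \emph{opposite} direction ($\varrho_\ell\lesssim$ local error $+$ oscillation), and no local bound of the type you need is available: the Galerkin correction $\widehat u_\ell-u_\ell$ solves a global problem and its energy cannot be localized this way. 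Summing your pointwise bound over $\mathcal{M}_\ell$ only produces $\eta_\ell(\mathcal{M}_\ell)^2\lesssim\varrho_\ell(\mathcal{M}_\ell)^2+\widetilde\mu_\ell^{\,2}\lesssim\varrho_\ell(\mathcal{M}_\ell)^2+\varrho_\ell(u_\ell)^2$, which is useless for D\"orfler marking. The same obstruction blocks the reverse transfer needed in the optimality argument.

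More importantly, your assertion that ``the $\lambda_\bullet$-part carries no mesh-size weight and hence does not obey an element-wise reduction estimate'' is \emph{false}, and this is precisely the insight the paper exploits. Because ${\rm refine}(\cdot)$ satisfies (M2), for every marked $T\in\mathcal{M}_\ell$ the children $\{T'\in\mathcal{T}_{\ell+1}:T'\subset T\}$ coincide with (or refine) $\widehat{\mathcal{T}}_\ell|_T$; hence $\boldsymbol A^{1/2}\nabla\widehat u_\ell|_{T'}\in\mathcal{P}^{p-1}(\{T'\})$ and $\lambda_{\ell+1}(T',\widehat u_\ell)=0$. This gives the reduction axiom for $\lambda_\bullet$ with reduction factor \emph{zero} on marked elements (Lemma~\ref{lem:local reduction lambda}). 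Together with elementary stability of $\lambda_\bullet$ (Lemma~\ref{lem:local stability lambda}), discrete reliability of $\eta_\bullet$ itself (Lemma~\ref{lemma:eta:drel}), and Pythagoras-based quasi-orthogonality for the sequence $\widehat u_j$ (Lemma~\ref{lem:general quasi orthogonality}), the paper applies the abstract framework of~\cite{axioms} \emph{directly} to $\eta_\bullet$, obtaining \eqref{eq:linear}--\eqref{eq:optimal} without any detour through $\varrho_\bullet$. The $\mu_\bullet$-based variants are then handled via the local equivalence~\eqref{eq1:hh2:equivalent}.
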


\begin{remark}
 \label{rem:estimator}
{\rm(i)} 
Recall that $\lambda_\bullet(\widehat u_\bullet)\le \norm{\boldsymbol{A}^{1/2} \nabla(u-u_\bullet)}{\Omega}$ according to \eqref{eq:hh2:efficient}.
For $\eta_\bullet^2 = \lambda_\bullet(\widehat u_\bullet)^2 + {\rm osc}_\bullet^2$, this yields that $C_{\rm eff} = 1$ in~\eqref{eq:reliable}, i.e., the estimator $\eta_\bullet$ is a guaranteed lower bound for the unknown total error with constant $1$.
\\
{\rm(ii)}
In general, one expects an optimal convergence rate of $s=p/d$ for the error.
Asymptotically, this leads to $\norm{\boldsymbol{A}^{1/2} \nabla(u-u_\ell)}{\Omega}= C (\#\mathcal{T}_\ell)^{-p/d}$ for some constant $C>0$.
If uniform refinement  ${\rm refine}(\mathcal{T}_\ell,\mathcal{T}_\ell)$ bisects all elements into exactly $C_{\rm son}$ elements, this suggests that 
$\norm{\boldsymbol{A}^{1/2} \nabla(u-\widehat u_\ell)}{\Omega}= C (C_{\rm son}\#\mathcal{T}_\ell)^{-p/d}$.
In particular, one obtains that $q_{\rm sat}=C_{\rm son}^{-p/d}$ in~\eqref{intro:saturation_assumption}. 
Together with \eqref{intro:efficiency+reliability} and \eqref{eq:hh2:efficient}, this yields the asymptotical upper bound 
\begin{align*}
\norm{\boldsymbol{A}^{1/2} \nabla (u - u_\ell)}{\Omega}
 \le (1-C_{\rm son}^{-2p/d})^{-1/2}\mu_\ell.
 \end{align*}
For $\eta_\bullet^2 = \mu_\bullet(\widehat u_\bullet)^2 + {\rm osc}_\bullet^2$, the estimator $\eta_\bullet$ is hence an upper bound for the unknown total error in~\eqref{eq:reliable} with known asymptotical reliability constant $C_{\rm rel} = (1-C_{\rm son}^{-2p/d})^{-1/2}$.
\\
{\rm(iii)} Note that the approximation norm $\norm{u}{\mathbb{A}_s^{\rm tot}}$ 
is the same as for residual error estimators; cf.~\cite{ckns,ks,ffp14,axioms}. In explicit terms, the $(h-h/2)$-type estimators from~\eqref{table} thus lead to the same  algebraic convergence rates as the residual error estimators. 
\\
{\rm (iv)} Alternatively, one could define the approximation classes $\norm{u}{\mathbb{A}_s^{\eta}}$ and $\norm{u}{\mathbb{A}_s^{\rm tot}}$ with ${\rm refine}(\cdot)$ instead of ${\rm nvb}(\cdot)$, i.e.,
\begin{align*}
\norm{u}{\widetilde{\mathbb{A}}_s^{\eta}}
 &:= \sup_{N\in\mathbb{N}_0}\min_{\substack{\mathcal{T}_\bullet\in{\rm refine}(\mathcal{T}_0)\\\#\mathcal{T}_\bullet - \#\mathcal{T}_0\le N}}(N+1)^s\,\eta_\bullet(\widehat u_\bullet),
\\
 \norm{u}{\widetilde{\mathbb{A}}_s^{\rm tot}}
 &:= \sup_{N\in\mathbb{N}_0} \min_{\substack{\mathcal{T}_\bullet\in{\rm refine}(\mathcal{T}_0)\\\#\mathcal{T}_\bullet - \#\mathcal{T}_0\le N}} (N+1)^s\, \big(\min_{v_\bullet \in {\mathcal{S}}^p_0(\mathcal{T}_\bullet)} \norm{\boldsymbol{A}^{1/2}\nabla (u - v_\bullet)}{\Omega}+{{\rm osc}_\bullet} \big).
\end{align*}
Clearly, \eqref{eq:reliable} gives that $\norm{u}{\widetilde{\mathbb{A}}_s^{\eta}}\simeq\norm{u}{\widetilde{\mathbb{A}}_s^{\rm tot}}$. 
Moreover, 
$\norm{u}{{\mathbb{A}}_s^{\eta}}\le \norm{u}{\widetilde{\mathbb{A}}_s^{\eta}}$ follows from ${\rm refine}(\mathcal{T}_0)\subseteq{\rm nvb}(\mathcal{T}_0)$.
Arguing as in  \cite[Prop.~4.15]{axioms}, we prove that
\begin{align}\label{eq:lower optimal estimate}
\widetilde c_{\rm opt}\norm{u}{\widetilde{\mathbb{A}}_s^{\eta}}
 \le \sup_{\ell\in\mathbb{N}_0}{(\# \mathcal{T}_\ell-\#\mathcal{T}_0+1)^{s}}\,{\eta_\ell(\widehat u_\ell)},
 \end{align}
where $\widetilde c_{\rm opt}$ depends only on $C_{\rm son}$, $\#\mathcal{T}_0$, and  $s$.
Together with \eqref{eq:optimal}, we conclude 
\begin{align}\label{eq:refine or nvb}
\frac{\widetilde c_{\rm opt}}{C_{\rm opt}}\norm{u}{\widetilde{\mathbb{A}}_s^{\eta}} \le \norm{u}{{\mathbb{A}}_s^{\eta}}\le \norm{u}{\widetilde{\mathbb{A}}_s^{\eta}}
\end{align} 
\end{remark}%

\section{Residual error estimator}
\label{section:residual}
As an auxiliary tool, we consider the residual error estimator. Because of its later application, we use the notation $\mathcal{T}_\blacktriangle$ and $\mathcal{T}_\vartriangle \in {\rm nvb}(\mathcal{T}_\blacktriangle)$ for a given triangulation $\mathcal{T}_\blacktriangle\in{\rm nvb}(\mathcal{T}_0)$ and a corresponding refinement.
Recall the definition of
${\rm res}_\blacktriangle(\cdot)$ and
${\rm osc}_\blacktriangle(\cdot)$ from~\eqref{eq:osc}.
We define, for all $v_\blacktriangle \in {\mathcal{S}}^p_0(\mathcal{T}_\blacktriangle)$,
\begin{align}\label{eq:eta}
 \varrho_\blacktriangle(\tau,v_\blacktriangle)^2 
 := 
 \begin{cases}
  {\rm res}_\blacktriangle(T,v_\blacktriangle)^2
 \quad&
 \text{for } \tau = T \in \mathcal{T}_\blacktriangle,
 \\ 
 h_E \, \norm{[\boldsymbol{A}\nabla v_\blacktriangle\cdot n]}{E}^2,
 \quad&
 \text{for } \tau = E \in \mathcal{E}_\blacktriangle^\Omega.
 \end{cases}
\end{align}
Generalizing the convention~\eqref{eq:convention}, we define, for all $v_\blacktriangle \in {\mathcal{S}}^p_0(\mathcal{T}_\blacktriangle)$,  
\begin{align*}
 \varrho_\blacktriangle(v_\blacktriangle) := \varrho_\blacktriangle(\mathcal{T}_\blacktriangle\cup\mathcal{E}_\blacktriangle^\Omega,v_\blacktriangle),
 \text{ where }
 \varrho_\blacktriangle({\mathcal{U}}_\blacktriangle,v_\blacktriangle)^2 := \sum_{\tau \in {\mathcal{U}}_\blacktriangle} \varrho_\blacktriangle(\tau,v_\blacktriangle)^2
 \text{ for all } {\mathcal{U}}_\blacktriangle \subseteq \mathcal{T}_\blacktriangle\cup\mathcal{E}_\blacktriangle^\Omega.
\end{align*}%
It is well-known~\cite{aoAposteriori,verfuerth} that $\varrho_\blacktriangle(\cdot)$ is reliable and efficient in the sense that
\begin{align}\label{eq:rho:releff}
 C_{\rm rel}^{-1} \,  \norm{\boldsymbol{A}^{1/2}\nabla(u - u_\blacktriangle)}{\Omega} + {\rm osc}_\blacktriangle
 \le \varrho_\blacktriangle(u_\blacktriangle)
 \le C_{\rm eff} \, \big( \norm{\boldsymbol{A}^{1/2}\nabla(u - u_\blacktriangle)}{\Omega} + {\rm osc}_\blacktriangle \big),
\end{align}
where  $C_{\rm rel},C_{\rm eff}>0$ depend only on $d$, $\boldsymbol{A}$, $p$, and $\gamma$-shape regularity of $\mathcal{T}_\blacktriangle$.

The next lemma recalls the discrete reliability estimate which originally goes back to~\cite{stevenson07}. While the proof of~\cite{stevenson07} relied on the refined elements $\mathcal{T}_\blacktriangle \backslash \mathcal{T}_\vartriangle$ plus one additional layer of elements for the localized upper bound, the proof of~\cite{ckns} involves only the refined elements $\mathcal{T}_\blacktriangle \backslash \mathcal{T}_\vartriangle$. Even though~\cite{stevenson07,ckns} consider an element-based formulation of the residual error estimator, their ideas of the proof also yield the following slightly stronger estimate for our variant of $\varrho_\blacktriangle(\cdot)$ (which is indexed by elements and interior facets).
While~\cite[Lemma~3.6]{ckns} would also involve non-refined facets of refined elements on the right-hand side of~\eqref{eq:rho:drel}, we only require refined facets. 

\begin{lemma}[discrete reliability of residual error estimator]
It holds that
\begin{align}\label{eq:rho:drel}
 \norm{\boldsymbol{A}^{1/2}\nabla(u_\vartriangle - u_\blacktriangle)}{\Omega}
 \le C_{\rm drl} \, \varrho_\blacktriangle\big((\mathcal{T}_\blacktriangle \backslash \mathcal{T}_\vartriangle) \cup (\mathcal{E}_\blacktriangle^\Omega \backslash \mathcal{E}_\vartriangle^\Omega) , u_\blacktriangle\big).
\end{align}
The constant $C_{\rm drl} > 0$ depends only on $d$, $\boldsymbol{A}$, and $\gamma$-shape regularity of $\mathcal{T}_\blacktriangle$.
\end{lemma}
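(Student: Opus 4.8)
The plan is to follow the localization strategy underlying~\cite{stevenson07,ckns}, the only new ingredient being a more careful choice of the quasi-interpolation operator so that its interpolation error has the tightest possible support around the refined region. Set $w_\vartriangle := u_\vartriangle - u_\blacktriangle \in {\mathcal{S}}^p_0(\mathcal{T}_\vartriangle)$. As $\mathcal{T}_\vartriangle\in{\rm nvb}(\mathcal{T}_\blacktriangle)$ yields ${\mathcal{S}}^p_0(\mathcal{T}_\blacktriangle)\subseteq{\mathcal{S}}^p_0(\mathcal{T}_\vartriangle)$, subtracting the discrete problems~\eqref{eq:discrete} for $\mathcal{T}_\blacktriangle$ and $\mathcal{T}_\vartriangle$ shows $\int_\Omega \boldsymbol{A}\nabla w_\vartriangle\cdot\nabla v_\blacktriangle\,dx = 0$ for all $v_\blacktriangle\in{\mathcal{S}}^p_0(\mathcal{T}_\blacktriangle)$, hence
\[
 \norm{\boldsymbol{A}^{1/2}\nabla w_\vartriangle}{\Omega}^2
 = \int_\Omega f\,(w_\vartriangle - v_\blacktriangle)\,dx - \int_\Omega \boldsymbol{A}\nabla u_\blacktriangle\cdot\nabla(w_\vartriangle - v_\blacktriangle)\,dx
 \qquad\text{for all } v_\blacktriangle\in{\mathcal{S}}^p_0(\mathcal{T}_\blacktriangle).
\]
Since $\boldsymbol{A}$ is $\mathcal{T}_0$- and hence $\mathcal{T}_\blacktriangle$-piecewise constant, ${\rm div}(\boldsymbol{A}\nabla u_\blacktriangle)|_T$ is a polynomial on each $T\in\mathcal{T}_\blacktriangle$, so elementwise integration by parts over $\mathcal{T}_\blacktriangle$ together with $(w_\vartriangle - v_\blacktriangle)|_\Gamma = 0$ recasts the right-hand side as
\[
 \sum_{T\in\mathcal{T}_\blacktriangle} \int_T \big(f + {\rm div}(\boldsymbol{A}\nabla u_\blacktriangle)\big)\,(w_\vartriangle - v_\blacktriangle)\,dx
 \;-\; \sum_{E\in\mathcal{E}_\blacktriangle^\Omega} \int_E [\boldsymbol{A}\nabla u_\blacktriangle\cdot n]\,(w_\vartriangle - v_\blacktriangle)\,ds .
\]

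The decisive choice is $v_\blacktriangle = J_\blacktriangle w_\vartriangle$, where $J_\blacktriangle : {\mathcal{S}}^p_0(\mathcal{T}_\vartriangle) \to {\mathcal{S}}^p_0(\mathcal{T}_\blacktriangle)$ is a Scott--Zhang-type quasi-interpolation whose functional at each Lagrange node $z$ of $\mathcal{T}_\blacktriangle$ is attached, whenever possible, to a \emph{surviving} facet $E_z\in\mathcal{E}_\blacktriangle\cap\mathcal{E}_\vartriangle$ containing $z$ (a boundary facet if $z\in\Gamma$, and an element of $\mathcal{T}_\blacktriangle\cap\mathcal{T}_\vartriangle$ for the Lagrange nodes in element interiors that occur for large $p$). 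The functional attached to a surviving facet $E_z$ reproduces the nodal value at $z$ of any function whose trace on $E_z$ is a polynomial of degree $\le p$; since $E_z$ is then also a facet of $\mathcal{T}_\vartriangle$, this applies to $w_\vartriangle$, and likewise for the element functionals on unrefined elements. Consequently, $\phi := w_\vartriangle - J_\blacktriangle w_\vartriangle$ vanishes at every Lagrange node of $\mathcal{T}_\blacktriangle$ that lies on a surviving facet or in an unrefined element. Because $\phi|_T$ and $w_\vartriangle|_T$ are polynomials of degree $\le p$ on every $T\in\mathcal{T}_\blacktriangle\cap\mathcal{T}_\vartriangle$ (recall $T\in\mathcal{T}_\vartriangle$), this forces $\phi|_T = 0$ on each unrefined element; and since $\phi|_E$ is a polynomial of degree $\le p$ on every surviving interior facet $E\in\mathcal{E}_\blacktriangle^\Omega\cap\mathcal{E}_\vartriangle^\Omega$, all of whose Lagrange nodes lie on $E$, also $\phi|_E = 0$ there. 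Together with $\phi|_\Gamma = 0$, the two sums above collapse onto $T\in\mathcal{T}_\blacktriangle\backslash\mathcal{T}_\vartriangle$ and $E\in\mathcal{E}_\blacktriangle^\Omega\backslash\mathcal{E}_\vartriangle^\Omega$, respectively --- this is precisely the place where~\cite[Lemma~3.6]{ckns} would instead retain all facets of the refined elements.

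It then remains to apply the Cauchy--Schwarz inequality elementwise and facetwise, to insert the standard local approximation and trace bounds for the quasi-interpolation error, $\norm{\phi}{T}\lesssim h_T\,\norm{\nabla w_\vartriangle}{\omega_\blacktriangle(T)}$ and $\norm{\phi}{E}\lesssim h_E^{1/2}\,\norm{\nabla w_\vartriangle}{\omega_\blacktriangle(E)}$ on the respective first-order element/facet patches $\omega_\blacktriangle(\cdot)$ of $\mathcal{T}_\blacktriangle$, and to use $\gamma$-shape regularity (bounded patch overlap, $h_E\simeq h_T$ for $E\subset T$) together with the uniform ellipticity of $\boldsymbol{A}$ so as to bound the remaining factor by $\big(\sum\norm{\nabla w_\vartriangle}{\omega_\blacktriangle(\cdot)}^2\big)^{1/2}\lesssim\norm{\nabla w_\vartriangle}{\Omega}\lesssim\norm{\boldsymbol{A}^{1/2}\nabla w_\vartriangle}{\Omega}$. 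Dividing by $\norm{\boldsymbol{A}^{1/2}\nabla w_\vartriangle}{\Omega}$ and recalling the definitions of ${\rm res}_\blacktriangle(\cdot)$ and $\varrho_\blacktriangle(\cdot)$ then yields~\eqref{eq:rho:drel} with $C_{\rm drl}$ independent of $\mathcal{T}_\blacktriangle\in{\rm nvb}(\mathcal{T}_0)$ and of $\mathcal{T}_\vartriangle\in{\rm nvb}(\mathcal{T}_\blacktriangle)$.

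I expect the main obstacle to be the construction and analysis of this modified Scott--Zhang operator: one has to verify that the assignment of nodes to surviving facets (resp.\ to unrefined elements) can be carried out globally consistently while honouring the homogeneous Dirichlet condition, that $J_\blacktriangle$ still satisfies the usual local $H^1$-stability and first-order approximation estimates on $\mathcal{T}_\blacktriangle$-patches (so that no coarser auxiliary mesh enters the final constant), and --- the genuinely new point compared to~\cite{ckns} --- that choosing surviving facets makes $\phi$ vanish already on all surviving interior facets, not merely on the unrefined elements, which is exactly what eliminates the non-refined facets of refined elements from the right-hand side of~\eqref{eq:rho:drel}.
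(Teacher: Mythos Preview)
Your proposal is correct and follows essentially the same approach as the paper: both arguments start from the residual representation obtained via Galerkin orthogonality and elementwise integration by parts, then choose $v_\blacktriangle$ as a Scott--Zhang-type quasi-interpolant of $u_\vartriangle - u_\blacktriangle$ whose nodal functionals are, whenever possible, attached to surviving facets $E\in\mathcal{E}_\blacktriangle\cap\mathcal{E}_\vartriangle$ (and to the unique containing element for interior Lagrange nodes), so that the interpolation error vanishes on all unrefined elements \emph{and} on all surviving interior facets. The paper's proof makes the case analysis for the node-to-facet assignment fully explicit (four bullet points distinguishing $z\in\Gamma$, $z$ on a surviving interior facet, $z$ on a refined interior facet, and $z$ in an element interior), which is precisely the verification you flag as the main obstacle in your final paragraph.
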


\begin{proof}[Sketch of proof]
Recall the (discrete) Galerkin orthogonality
\begin{align*}
 \int_\Omega \boldsymbol{A}\nabla (u_\vartriangle - u_\blacktriangle) \cdot \nabla v_\blacktriangle \, dx = 0
 \quad \text{for all } v_\blacktriangle \in {\mathcal{S}}^p_0(\mathcal{T}_\blacktriangle).
\end{align*}
For arbitrary $v_\blacktriangle \in {\mathcal{S}}^p_0(\mathcal{T}_\blacktriangle)$, define $w_\vartriangle := (u_\vartriangle - u_\blacktriangle)-v_\blacktriangle$.
The discrete formulation~\eqref{eq:discrete} for $u_\vartriangle$ proves that
\begin{align*}
 &\norm{\boldsymbol{A}^{1/2}\nabla (u_\vartriangle - u_\blacktriangle)}{\Omega}^2
 = \int_\Omega \boldsymbol{A}\nabla (u_\vartriangle - u_\blacktriangle) \cdot \nabla w_\vartriangle \,dx
 \reff{eq:discrete}= \int_\Omega f w_\vartriangleÃÂ \, dx
 - \sum_{T\in\mathcal{T}_\blacktriangle}\int_T \boldsymbol{A}\nabla u_\blacktriangle \cdot \nabla w_\vartriangle \,dx.
\end{align*}
For $T\in\mathcal{T}_\blacktriangle$, integration by parts and $w_\vartriangle \in {\mathcal{S}}^p_0(\mathcal{T}_\vartriangle)$ yield that
\begin{align*}
 -\int_T \boldsymbol{A}\nabla u_\blacktriangle \cdot \nabla w_\vartriangle \,dx
 = \int_T {\rm div}(\boldsymbol{A}\nabla u_\blacktriangle) \, w_\vartriangle \, dx
 - \int_{\partial T\backslash\Gamma} \boldsymbol{A}\nabla u_\blacktriangle\cdot n\,w_\vartriangle \, ds.
\end{align*}
Combining these identities, we see that
\begin{align}\label{eq1:rho:drel}
&\norm{\boldsymbol{A}^{1/2}\nabla (u_\vartriangle - u_\blacktriangle)}{\Omega}^2
= \sum_{T \in \mathcal{T}_\blacktriangle} \int_T (f+{\rm div} (\boldsymbol{A}\nabla u_\blacktriangle)) w_\vartriangleÃÂ \, dx
- \sum_{E \in \mathcal{E}_\blacktriangle^\Omega} \int_E[ \boldsymbol{A}\nabla u_\blacktriangle\cdot n] \, w_\vartriangle \,ds.
\end{align}
To proceed, we will choose $v_\blacktriangle = \mathcal{J}_\blacktriangle (u_\vartriangle - u_\blacktriangle)$, where $\mathcal{J}_\blacktriangle : H^1(\Omega) \to {\mathcal{S}}^p(\mathcal{T}_\blacktriangle)$ is a Scott-Zhang projector~\cite{sz}. For the convenience of the reader, we recall the construction of $\mathcal{J}_\blacktriangle$: Let $\mathcal{L}_\blacktriangle \subseteq \overline\Omega$ be the set of  Lagrange nodes of ${\mathcal{S}}^p(\mathcal{T}_\blacktriangle)$. Let $\set{\phi_{\blacktriangle,z}\in{\mathcal{S}}^p(\mathcal{T}_\blacktriangle)}{z\in\mathcal{L}_\blacktriangle}$ be the corresponding nodal basis of ${\mathcal{S}}^p(\mathcal{T}_\blacktriangle)$, i.e., with Kroneckers's delta, it holds that $\phi_{\blacktriangle,z}(z') = \delta_{zz'}$ for all $z,z' \in \mathcal{L}_\blacktriangle$. If $z\in\mathcal{L}_\blacktriangle$ is on the skeleton $\bigcup_{E \in \mathcal{E}_\blacktriangle} E$, choose a facet $\tau_z := E \in \mathcal{E}_\blacktriangle$ with $z \in \tau_z$ subject to the following constraints (which further specify the constraints from~\cite{sz}):
\begin{itemize}
\item If $z \in \Gamma$, then choose $\tau_z = E \subset \Gamma$.
\item If $z \in E \in \mathcal{E}_\blacktriangle^\Omega \cap \mathcal{E}_\vartriangle^\Omega$, then choose $\tau_z = E$ (which is not necessarily unique).
\item Otherwise, choose an arbitrary $\tau_z = E \in \mathcal{E}_\blacktriangle^\Omega \backslash \mathcal{E}_\vartriangle^\Omega$ with $z \in E$.
\end{itemize}
If $z$ is not on the skeleton, then there exists a unique element $T\in\mathcal{T}_\blacktriangle$ such that $z$ lies in the interior of $\tau_z := T$.
Consider the nodal basis $\{\phi_{\blacktriangle,z'}\}$ restricted to $\mathcal{P}^p(\tau_z)$ and let $\{\psi_{\blacktriangle,z'}\} \subset \mathcal{P}^p(\tau_z)$ be the corresponding dual basis, i.e., 
$\int_{\tau_z}\phi_{\blacktriangle,z'}\psi_{\blacktriangle,z''}\,dx = \delta_{z'z''}$ for all $z',z'' \in \tau_z \cap \mathcal{L}_\blacktriangle$. Then, the Scott-Zhang projector is defined by
\begin{align*}
 \mathcal{J}_\blacktriangle v := \sum_{z \in \mathcal{L}_\blacktriangle} \bigg(\int_{\tau_z} v \psi_{\blacktriangle,z} \, dx\bigg)\,\phi_{\blacktriangle,z}.
\end{align*}
According to~\cite{sz}, $\mathcal{J}_\blacktriangle$ has the following properties for all $w \in H^1(\Omega)$, all $w_\blacktriangle \in {\mathcal{S}}^p(\mathcal{T}_\blacktriangle)$, and all $T\in\mathcal{T}_\blacktriangle$, where $\omega_\blacktriangle(T):=\bigcup\set{T'\in\mathcal{T}_\blacktriangle}{T\cap T'\neq\emptyset}$ denotes the element patch:
\begin{itemize}
\item {\bf projection property:} $w = w_\blacktriangle$ on $\omega_\blacktriangle(T)$ implies that $\mathcal{J}_\blacktriangle w = w_\blacktriangle$ on $T$;
\item {\bf preservation of discrete traces:} $w = w_\blacktriangle$ on $\Gamma$ implies that $\mathcal{J}_\blacktriangle w = w_\blacktriangle$ on $\Gamma$;
\item {\bf $\boldsymbol{L^2}$ approximation property:} $\norm{w - \mathcal{J}_\blacktriangle w}T \lesssim h_T \, \norm{\nabla w}{\omega_\blacktriangle(T)}$;
\item {\bf $\boldsymbol{H^1}$ stability:} $\norm{\nabla (w - \mathcal{J}_\blacktriangle w)}T \lesssim \norm{\nabla w}{\omega_\blacktriangle(T)}$.
\end{itemize}
In addition, our choice of $\tau_z$ yields further structure: Let $v_\vartriangle \in {\mathcal{S}}^p_0(\mathcal{T}_\vartriangle)$ and $z \in \mathcal{L}_\blacktriangle$.
\begin{itemize}
\item
If $z \in \Gamma$, it holds that $v_\vartriangle |_{\tau_z} = 0$ and hence $(\mathcal{J}_\blacktriangle v_\vartriangle)(z) = 0 = v_\vartriangle(z)$.

\item
If $\tau_z = E \in \mathcal{E}_\blacktriangle^\Omega \cap \mathcal{E}_\vartriangle^\Omega$, then $v_\vartriangle |_{\tau_z} \in \mathcal{P}^p(\tau_z)$ and hence $(\mathcal{J}_\blacktriangle v_\vartriangle)(z) = v_\vartriangle(z)$ by choice of the dual basis.

\item 
Let $E \in \mathcal{E}_\blacktriangle^\Omega \cap \mathcal{E}_\vartriangle^\Omega$. Suppose that $z \in E \neq \tau_z$. Then, 
$(\mathcal{J}_\blacktriangle v_\vartriangle)(z) = v_\vartriangle(z)$ follows from the previous steps.

\item
If $z \in \tau_z = T \in \mathcal{T}_\blacktriangle \capÃÂ \mathcal{T}_\vartriangle$ is in the interior of $T$, then $v_\vartriangle |_{\tau_z} \in \mathcal{P}^p(\tau_z)$ and hence $(\mathcal{J}_\blacktriangle v_\vartriangle)(z) = v_\vartriangle(z)$ by choice of the dual basis.
\end{itemize}
Overall, this proves that $v_\vartriangle - \mathcal{J}_\blacktriangle v_\vartriangle = 0$ on all $T \in \mathcal{T}_\blacktriangle \capÃÂ \mathcal{T}_\vartriangle$ as well as on all $E \in \mathcal{E}_\blacktriangle^\Omega \cap \mathcal{E}_\vartriangle^\Omega$. For $v_\vartriangle := u_\vartriangle - u_\blacktriangle$ and $w_\vartriangle := v_\vartriangle - \mathcal{J}_\blacktriangle v_\vartriangle$, we plug this into~\eqref{eq1:rho:drel} and observe that
\begin{align*}
&\norm{\boldsymbol{A}^{1/2}\nabla (u_\vartriangle - u_\blacktriangle)}{\Omega}^2
= \sum_{T \in \mathcal{T}_\blacktriangle \backslash \mathcal{T}_\vartriangle} 
\int_T(f+{\rm div}(\boldsymbol{A} \nabla u_\blacktriangle)) w_\vartriangleÃÂ \, dx
- \sum_{E \in \mathcal{E}_\blacktriangle^\Omega \backslash \mathcal{E}_\vartriangle^\Omega} \int_E [\boldsymbol{A}\nabla u_\blacktriangle\cdot n] \, w_\vartriangle \,ds.
\end{align*}
With the usual arguments (see, e.g.,~\cite{aoAposteriori,verfuerth}), this leads to~\eqref{eq:rho:drel}.
\end{proof}

Next, we recall that the error estimator $\varrho_\blacktriangle(\cdot)$ depends (locally) Lipschitz continuously on the discrete functions. The following result is obtained analogously to~\cite[Prop.~3.3]{ckns}, where the proof relies only on the trace inequality plus inverse estimates. 

\begin{lemma}[local stability of residual error estimator]\label{lem:local stability rho}
Let $v_\blacktriangle, w_\blacktriangle \in {\mathcal{S}}^p_0(\mathcal{T}_\blacktriangle)$. Let $T,T'\in\mathcal{T}_\blacktriangle$ and $E := T \cap T' \in \mathcal{E}_\blacktriangle^\Omega$. Then, it holds that
\begin{subequations}\label{eq:rho:stab}
\begin{align}\label{eq:rho:stab elements}
 \varrho_\blacktriangle(T,v_\blacktriangle) &\le \varrho_\blacktriangle(T,w_\blacktriangle) + C_{\rm stb}\,\norm{\boldsymbol{A}^{1/2}\nabla (v_\blacktriangle - w_\blacktriangle)}{T},
 \\
 \varrho_\blacktriangle(E,v_\blacktriangle) &\le \varrho_\blacktriangle(E,w_\blacktriangle) + C_{\rm stb}\,\norm{\boldsymbol{A}^{1/2}\nabla (v_\blacktriangle - w_\blacktriangle)}{T\cup T'}.
\end{align}
\end{subequations}
The constant $C_{\rm stb} > 0$ depends only on $d$, $\boldsymbol{A}$, $p$, and $\gamma$-shape regularity of $\mathcal{T}_\blacktriangle$.\qed
\end{lemma}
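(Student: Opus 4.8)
The statement to prove is Lemma~\ref{lem:local stability rho}: local Lipschitz dependence of the residual indicators $\varrho_\blacktriangle(\tau,\cdot)$ on the discrete argument, with a constant depending only on $d$, $\boldsymbol{A}$, $p$, and $\gamma$-shape regularity. Since both bounds are of the form ``indicator at $v_\blacktriangle$ $\le$ indicator at $w_\blacktriangle$ $+$ (local energy norm of the difference)'', the natural route is the reverse triangle inequality: by definition $\varrho_\blacktriangle(T,\cdot)$ and $\varrho_\blacktriangle(E,\cdot)$ are (semi)norms of linear functions of the discrete argument, hence
\[
 \varrho_\blacktriangle(T,v_\blacktriangle) \le \varrho_\blacktriangle(T,w_\blacktriangle) + \varrho_\blacktriangle(T,v_\blacktriangle - w_\blacktriangle),
\]
and likewise for facets. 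So everything reduces to estimating $\varrho_\blacktriangle(\tau,\psi_\blacktriangle)$ for $\psi_\blacktriangle := v_\blacktriangle - w_\blacktriangle \in {\mathcal{S}}^p_0(\mathcal{T}_\blacktriangle)$ by $C_{\rm stb}\,\norm{\boldsymbol{A}^{1/2}\nabla\psi_\blacktriangle}{T}$ (respectively over $T\cup T'$). This is the content that I would extract from \cite[Prop.~3.3]{ckns}.

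\textbf{Volumetric term.} For $\tau = T$, recall $\varrho_\blacktriangle(T,\psi_\blacktriangle)^2 = h_T^2\,\norm{f+{\rm div}(\boldsymbol{A}\nabla\psi_\blacktriangle)}{T}^2$ (using that $\psi_\blacktriangle\in{\mathcal{S}}^p_0(\mathcal{T}_\blacktriangle)$, the finer-mesh sum collapses). Splitting off $f$ would not help, since $f$ cancels in the difference — more precisely, I apply the reverse triangle inequality \emph{before} touching $f$, so that the only surviving term is $h_T\,\norm{{\rm div}(\boldsymbol{A}\nabla\psi_\blacktriangle)}{T}$. On the single element $T$, $\boldsymbol{A}$ is constant, and ${\rm div}(\boldsymbol{A}\nabla\psi_\blacktriangle)|_T$ is a polynomial of degree $\le p-2$; an inverse estimate on the reference element, transported by the affine map and $\gamma$-shape regularity, gives $h_T\,\norm{{\rm div}(\boldsymbol{A}\nabla\psi_\blacktriangle)}{T} \lesssim \norm{\nabla\psi_\blacktriangle}{T} \lesssim \norm{\boldsymbol{A}^{1/2}\nabla\psi_\blacktriangle}{T}$, the last step using that $\boldsymbol{A}^{-1/2}$ is bounded. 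This handles~\eqref{eq:rho:stab elements}.

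\textbf{Jump term.} For $\tau = E = T\cap T'$, again apply the reverse triangle inequality first, leaving $h_E^{1/2}\,\norm{[\boldsymbol{A}\nabla\psi_\blacktriangle\cdot n]}{E}$. Bound the jump by the two one-sided contributions $h_E^{1/2}(\norm{\boldsymbol{A}\nabla\psi_\blacktriangle|_T\cdot n}{E} + \norm{\boldsymbol{A}\nabla\psi_\blacktriangle|_{T'}\cdot n}{E})$. For each, use a scaled trace inequality on the element: $\norm{\varphi}{E}^2 \lesssim h_T^{-1}\norm{\varphi}{T}^2 + h_T\norm{\nabla\varphi}{T}^2$ applied to $\varphi = \boldsymbol{A}\nabla\psi_\blacktriangle|_T\cdot n$; since $\varphi$ is polynomial on $T$, the gradient term is absorbed by an inverse estimate, yielding $h_E^{1/2}\norm{\varphi}{E}\lesssim \norm{\nabla\psi_\blacktriangle}{T}$. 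Summing over $T$ and $T'$ and using $h_E\simeq h_T\simeq h_{T'}$ (shape regularity) plus boundedness of $\boldsymbol{A}^{\pm1/2}$ gives the bound over $T\cup T'$. The constant depends only on the stated quantities since all estimates reduce to the reference simplex.

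\textbf{Main obstacle.} There is no deep obstacle — the result is a routine scaling argument. The one point that needs care is keeping the constants dimensionless: all inverse and trace inequalities must be phrased on a fixed reference simplex and then transported, so that $\gamma$-shape regularity (equivalently, $h_E\simeq h_T\simeq{\rm diam}(T)$ with $\gamma$-dependent constants) is the only mesh information used, and $\|\boldsymbol{A}^{1/2}\|_{L^\infty}$, $\|\boldsymbol{A}^{-1/2}\|_{L^\infty}$, $d$, $p$ the only other inputs. I would therefore organize the proof as: (1) reverse triangle inequality to reduce to $\psi_\blacktriangle = v_\blacktriangle - w_\blacktriangle$; (2) reference-element inverse estimate for the volumetric residual; (3) reference-element trace-plus-inverse estimate for the jump; (4) collect constants. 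Since the companion result is already cited from \cite{ckns}, a short sketch in this style is all that is needed.
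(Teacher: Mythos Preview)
Your approach is correct and is exactly what the paper indicates (the paper gives no proof beyond citing \cite[Prop.~3.3]{ckns} and remarking that ``the proof relies only on the trace inequality plus inverse estimates''). One minor point: your displayed inequality $\varrho_\blacktriangle(T,v_\blacktriangle) \le \varrho_\blacktriangle(T,w_\blacktriangle) + \varrho_\blacktriangle(T,v_\blacktriangle - w_\blacktriangle)$ is not literally correct since $v\mapsto f+{\rm div}(\boldsymbol{A}\nabla v)$ is affine rather than linear (so $\varrho_\blacktriangle(T,\cdot)$ is not a seminorm), but you immediately handle this correctly by isolating $h_T\,\norm{{\rm div}(\boldsymbol{A}\nabla\psi_\blacktriangle)}{T}$ as the surviving term.
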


\begin{remark}\label{rem:local stability rho}
We note that \eqref{eq:rho:stab elements} is also satisfied if $v_\blacktriangle, w_\blacktriangle\in {\mathcal{S}}^p_0(\widehat{\mathcal{T}}_\blacktriangle)$. 
In this case the constant $C_{\rm stb}>0$ depends additionally on $C_{\rm son}$.
\end{remark}

The following lemma is proved along the lines of~\cite[Prop.~2]{fop} and adapts the classical efficiency proof by using cleverly chosen bubble functions. We note that the idea goes back to the seminal works~\cite{doerfler,mns}. 

\begin{lemma}[local discrete efficiency of residual error estimator]
\label{lemma:rho:efficiency}
Let $T, T',T'' \in \mathcal{T}_\blacktriangle \backslash \mathcal{T}_\vartriangle$ and $E = T'\cap T'' \in \mathcal{E}_\blacktriangle^\Omega \backslash \mathcal{E}_\vartriangle^\Omega$. Let $p\ge1$. 
If $E$ contains an interior node $z \in {\mathcal{N}}_\vartriangle$, 
then it holds that
\begin{subequations}\label{eq:rho:eff}
\begin{align}
\label{eq2:rho:eff}
 C_{\rm eff}^{-1}\,\varrho_\blacktriangle(E,v_\blacktriangle)
 &\le \norm{\boldsymbol{A}^{1/2}\nabla(u_\vartriangle - v_\blacktriangle)}{T'\cup T''}
 + \varrho_\blacktriangle(\{T', T''\},v_\blacktriangle).
 \intertext{If one of the following cases is satisfied
\begin{itemize}
\item $d=2$ and 
 $p\ge2$,
 \item $d=3$, $p\ge2$, and each facet of $T$ contains an interior node $z\in{\mathcal{N}}_\vartriangle$,
 \item $T$ contains an interior node $z \in {\mathcal{N}}_\vartriangle$, 
 \end{itemize}then it holds that}
 \label{eq1:rho:eff}
 C_{\rm eff}^{-1}\,\varrho_\blacktriangle(T,v_\blacktriangle)
 &\le \norm{\boldsymbol{A}^{1/2}\nabla(u_\vartriangle - v_\blacktriangle)}{T}
 + \begin{cases}{\rm apx}_\blacktriangle(T) \text{ in the first two cases,}
 \\ {\rm osc}_\blacktriangle(T) \,\text{ in the third case}. \end{cases}
\end{align}
\end{subequations}
The constant $C_{\rm eff}>0$ depends only on $d$, $\boldsymbol{A}$, $p$, and $\gamma$-shape regularity of $\mathcal{T}_\blacktriangle$ and $\mathcal{T}_\vartriangle$.
\end{lemma}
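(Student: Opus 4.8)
The plan is to run the classical residual–estimator efficiency argument (bubble functions, Verfürth), but with the twist that the test functions must lie in the \emph{discrete} space ${\mathcal{S}}^p_0(\mathcal{T}_\vartriangle)$ rather than in $H^1_0(\Omega)$; this is feasible precisely because the elements and facets under consideration are refined and carry interior nodes, and the bookkeeping follows~\cite[Prop.~2]{fop} (the idea of using cleverly placed discrete bubbles going back to~\cite{doerfler,mns}). Abbreviate $g := f + {\rm div}(\boldsymbol{A}\nabla v_\blacktriangle)$, which on each $T\in\mathcal{T}_\blacktriangle\setminus\mathcal{T}_\vartriangle$ equals $f$ plus a polynomial of degree $\le p-2$ (as $\boldsymbol{A}$ is $\mathcal{T}_0$-piecewise constant), so that $\varrho_\blacktriangle(T,v_\blacktriangle)=h_T\norm{g}{T}$, and put $J_E:=[\boldsymbol{A}\nabla v_\blacktriangle\cdot n]|_E\in\mathcal{P}^{p-1}(E)$, so that $\varrho_\blacktriangle(E,v_\blacktriangle)=h_E^{1/2}\norm{J_E}{E}$. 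The common starting point is the identity, valid for all $w\in{\mathcal{S}}^p_0(\mathcal{T}_\vartriangle)$ by the discrete equation~\eqref{eq:discrete} for $u_\vartriangle$ and $\mathcal{T}_\blacktriangle$-elementwise integration by parts,
\begin{align}\label{eq:eff:starting}
 \int_\Omega\boldsymbol{A}\nabla(u_\vartriangle-v_\blacktriangle)\cdot\nabla w\,dx
 = \sum_{T\in\mathcal{T}_\blacktriangle}\int_T g\,w\,dx - \sum_{E\in\mathcal{E}_\blacktriangle^\Omega}\int_E J_E\,w\,ds,
\end{align}
together with a localized choice of $w$ that picks out a single term on the right.

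For the facet estimate~\eqref{eq2:rho:eff}, I would fix the interior node $z\in{\mathcal{N}}_\vartriangle$ of $E=T'\cap T''$ with associated piecewise-affine hat function $\phi_{\vartriangle,z}\in{\mathcal{S}}^1(\mathcal{T}_\vartriangle)\subseteq{\mathcal{S}}^p(\mathcal{T}_\vartriangle)$ (whose support lies in $T'\cup T''$), take a polynomial extension $\widetilde J_E$ of $J_E$ with $\widetilde J_E|_{T'}\in\mathcal{P}^{p-1}(T')$, $\widetilde J_E|_{T''}\in\mathcal{P}^{p-1}(T'')$ continuous across $E$ (e.g. the one constant along lines transversal to $E$), and test~\eqref{eq:eff:starting} with $w:=\phi_{\vartriangle,z}\widetilde J_E\in{\mathcal{S}}^p_0(\mathcal{T}_\vartriangle)$. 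Since $\phi_{\vartriangle,z}$ is bounded below by a fixed constant on a $(d-1)$-ball of radius $\simeq h_E$ inside $E$, the weighted-norm equivalence on the finite-dimensional space $\mathcal{P}^{p-1}(E)$ gives $\int_E\phi_{\vartriangle,z}J_E^2\,ds\simeq\norm{J_E}{E}^2$, while scaling and an inverse estimate give $\norm{w}{T'\cup T''}\lesssim h_E^{1/2}\norm{J_E}{E}$ and $\norm{\nabla w}{T'\cup T''}\lesssim h_E^{-1/2}\norm{J_E}{E}$. As only the terms with $T\in\{T',T''\}$ and $E$ itself survive in~\eqref{eq:eff:starting}, rearranging, Cauchy--Schwarz, $h_{T'}\simeq h_{T''}\simeq h_E$, $\norm{g}{T'\cup T''}\simeq h_E^{-1}\varrho_\blacktriangle(\{T',T''\},v_\blacktriangle)$, and division by $\norm{J_E}{E}$ yield~\eqref{eq2:rho:eff}.

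For the element estimate~\eqref{eq1:rho:eff} I would argue the same way with a discrete \emph{element} bubble supported in $T$. In the third case ($T$ carries an interior node $z\in{\mathcal{N}}_\vartriangle$), set $\bar g:=\pi_\blacktriangle f+{\rm div}(\boldsymbol{A}\nabla v_\blacktriangle)|_T\in\mathcal{P}^{p-1}(T)$, so that $h_T\norm{g-\bar g}{T}={\rm osc}_\blacktriangle(T)$, and test~\eqref{eq:eff:starting} with $w:=\phi_{\vartriangle,z}\bar g\in{\mathcal{S}}^p_0(\mathcal{T}_\vartriangle)$, whose support (the nodal patch of $z$) is contained in $T$; now all facet terms and all element terms other than $T$ drop out, whence
\begin{align*}
 \norm{\bar g}{T}^2\simeq\int_T\phi_{\vartriangle,z}\bar g^2\,dx
 = \int_T\boldsymbol{A}\nabla(u_\vartriangle-v_\blacktriangle)\cdot\nabla w\,dx-\int_T(1-\pi_\blacktriangle)f\,w\,dx,
\end{align*}
and Cauchy--Schwarz with $\norm{w}{T}\lesssim\norm{\bar g}{T}$, $\norm{\nabla w}{T}\lesssim h_T^{-1}\norm{\bar g}{T}$ gives $h_T\norm{\bar g}{T}\lesssim\norm{\boldsymbol{A}^{1/2}\nabla(u_\vartriangle-v_\blacktriangle)}{T}+{\rm osc}_\blacktriangle(T)$; with $\varrho_\blacktriangle(T,v_\blacktriangle)=h_T\norm{g}{T}\le h_T\norm{\bar g}{T}+{\rm osc}_\blacktriangle(T)$ this is the claim. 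In the first two cases ($p\ge2$) I would instead use $\bar g:=\Pi_\blacktriangle f+{\rm div}(\boldsymbol{A}\nabla v_\blacktriangle)|_T\in\mathcal{P}^{\max\{p-2,0\}}(T)=\mathcal{P}^{p-2}(T)$, so that $h_T\norm{g-\bar g}{T}={\rm apx}_\blacktriangle(T)$, and replace $\phi_{\vartriangle,z}$ by a discrete element bubble $b_T\in{\mathcal{S}}^2_0(\mathcal{T}_\vartriangle)\subseteq{\mathcal{S}}^p_0(\mathcal{T}_\vartriangle)$ supported in $T$ --- the product of the hat functions of two edge midpoints of $T$ for $d=2$ under~(M3), and of the two interior nodes highlighted in Figure~\ref{fig:bisec17} for $d=3$ under~(M3). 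Since $b_T$ is bounded below on a $d$-ball of radius $\simeq h_T$ inside $T$, the computation above with $w:=b_T\bar g$ carries over verbatim, with ${\rm apx}_\blacktriangle(T)$ in place of ${\rm osc}_\blacktriangle(T)$.

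The hard part is not this bookkeeping but the construction and the three properties of the discrete bubbles invoked above: that $\phi_{\vartriangle,z}$ (resp.\ $b_T$) is supported in the correct patch, that this patch contains a ball of radius comparable to the parent mesh-size $h_T$ (resp.\ $h_E$), and the ensuing weighted-norm equivalences on the polynomial spaces, all with constants depending only on $d$, $\boldsymbol{A}$, $p$, and shape regularity of $\mathcal{T}_\blacktriangle$ and $\mathcal{T}_\vartriangle$ (in particular on the comparability of the two mesh-sizes near these new nodes). This is settled by a scaling/compactness argument reducing to the finitely many reference configurations of a refined simplex generated by the refinement rules of Section~\ref{section:triangulations}, exactly as in~\cite[Prop.~2]{fop}; the crucial input is that (M3)/(M3') indeed force an interior node of $E$ (resp.\ of $T$) whose patch is non-degenerate relative to the parent element, which is precisely what is verified there.
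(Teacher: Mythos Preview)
Your overall architecture---discrete bubble, identity~\eqref{eq:eff:starting}, Cauchy--Schwarz, inverse estimate, triangle inequality with $\bar g$---matches the paper's. The genuine gap is in the \emph{choice} of bubble. You take $\phi_{\vartriangle,z}\in\mathcal{S}^1(\mathcal{T}_\vartriangle)$ and claim it is bounded below on a ball of radius $\simeq h_E$ (resp.\ $\simeq h_T$). But the lemma is stated for arbitrary $\mathcal{T}_\vartriangle\in{\rm nvb}(\mathcal{T}_\blacktriangle)$, so $\mathcal{T}_\vartriangle$ may be refined arbitrarily many levels beyond $\mathcal{T}_\blacktriangle$ near $z$; then ${\rm supp}(\phi_{\vartriangle,z})$ has diameter $\simeq h_{\vartriangle,z}\ll h_E$, and the equivalence $\int_E\phi_{\vartriangle,z}J_E^2\,ds\simeq\norm{J_E}{E}^2$ fails with a uniform constant. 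Shape regularity of $\mathcal{T}_\blacktriangle$ and $\mathcal{T}_\vartriangle$ separately does \emph{not} bound the level gap, so your appeal to ``finitely many reference configurations'' breaks down: there are infinitely many.

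The paper fixes this by passing to the \emph{coarsest} NVB refinement $\mathcal{T}_\star\in{\rm nvb}(\mathcal{T}_\blacktriangle)$ that already creates the required interior node, and takes the bubble $\beta_E:=\phi_{\star,z}$ (resp.\ $\beta_T$) from $\mathcal{T}_\star$. Because NVB is binary and $\mathcal{T}_\star$ is coarsest, the local picture of $\mathcal{T}_\star$ inside $T'\cup T''$ (resp.\ $T$) is one of finitely many reference configurations, so $|{\rm supp}(\beta_E)|\simeq|T'\cup T''|$ with a uniform constant; and since $\mathcal{T}_\vartriangle$ must locally refine at least as far as $\mathcal{T}_\star$ (the assumed node is in $\mathcal{N}_\vartriangle$), one still has $\beta_E\in\mathcal{S}^1_0(\mathcal{T}_\vartriangle)$. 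This is precisely the ``cleverly chosen'' bubble the paper refers to. A related slip: in the case $d=2$, $p\ge2$ the lemma assumes only $T\in\mathcal{T}_\blacktriangle\setminus\mathcal{T}_\vartriangle$, not (M3); the paper uses a \emph{single} bisection of $T$ and takes $\beta_T=\phi_{\star,z_1}\phi_{\star,z_2}$ for the two endpoints of the new \emph{interior} edge (one old vertex, one new midpoint), which is automatically supported in $T$. Your ``product of two edge-midpoint hat functions'' is neither supported in $T$ in general nor available without (M3).
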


\begin{proof}[Proof of~\eqref{eq2:rho:eff}]
Since NVB is a binary refinement rule, there exists a coarsest refinement $\mathcal{T}_\star \in {\rm nvb}(\mathcal{T}_\blacktriangle)$ such that $E$ contains an interior node $z \in {\mathcal{N}}_\star \backslash {\mathcal{N}}_\blacktriangle$.
Choose the corresponding hat function $\phi_{\star,z} \in {\mathcal{S}}^1_0(\mathcal{T}_\vartriangle)$ as discrete facet bubble function
\begin{align}\label{eq:betaE}
 \beta_E := \phi_{\star,z} \in {\mathcal{S}}^1_0(\mathcal{T}_\star) \subseteq {\mathcal{S}}^1_0(\mathcal{T}_\vartriangle).
\end{align}
In particular, $\beta_E \in H^1_0(T'\cup T'')$ and $|{\rm supp}(\beta_E)| \simeq |T' \cup T''|$.
Since $u_\blacktriangle\in\mathcal{P}^p(\mathcal{T}_\blacktriangle)$, a scaling argument shows the existence of some $r_\blacktriangle \in \mathcal{P}^{p-1}(T'\cup T'')$ such that 
\begin{align*}
 r_\blacktriangle|_E = [\boldsymbol{A}\nabla u_\blacktriangle \cdot n]|_E
 \quad \text{and} \quad
 \norm{r_\blacktriangle}{T'\cup T''} \lesssim h_E^{1/2} \, \norm{[\boldsymbol{A}\nabla u_\blacktriangle \cdot n]}{E}.
\end{align*}
Choose $v := r_\blacktriangle \beta_E \in \mathcal{P}^p(\mathcal{T}_\vartriangle)$ and note that $v\in H^1_0(T'\cup T'')$.
Let ${\rm div}_\blacktriangle$ denote the $\mathcal{T}_\blacktriangle$-piecewise divergence operator.  A scaling argument and integration by parts prove that
\begin{align*}
 \norm{[\boldsymbol{A}\nabla u_\blacktriangle \cdot n]}{E}^2
& \lesssim \norm{[\boldsymbol{A}\nabla u_\blacktriangle \cdot n]\,\beta_E^{1/2}}{E}^2
 = \int_E [\boldsymbol{A}\nabla u_\blacktriangle \cdot n] \, v \, ds
 \\&= \int_{\partial T'} \boldsymbol{A}\nabla u_\blacktriangle \cdot n \, v \, ds+ \int_{\partial T''} \boldsymbol{A}\nabla u_\blacktriangle \cdot n \, v \, ds 
\\ &= \int_{T' \cup T''} \boldsymbol{A}\nabla u_\blacktriangle \cdot \nabla v \, dx
 + \int_{T' \cup T''} {\rm div}_\blacktriangle(\boldsymbol{A}\nabla u_\blacktriangle) \, v \, dx.
\end{align*}
Since $v \in \mathcal{P}^p(\mathcal{T}_\vartriangle) \cap H^1_0(T'\cup T'') \subset {\mathcal{S}}^p_0(\mathcal{T}_\vartriangle)$, the discrete formulation~\eqref{eq:discrete} yields that
\begin{align}\label{eq:rho:eff:problem2}
\begin{split}
 &\norm{[\boldsymbol{A}\nabla u_\blacktriangle \cdot n]}{E}^2
 \lesssim \int_{T' \cup T''} \boldsymbol{A}\nabla u_\blacktriangle \cdot \nabla v \, dx
 + \int_{T' \cup T''} {\rm div}_\blacktriangle(\boldsymbol{A}\nabla u_\blacktriangle ) \, v \, dx
 \\&\quad
 \reff{eq:discrete}= -\int_{T'\cup T''} \boldsymbol{A}\nabla (u_\vartriangle - u_\blacktriangle) \cdot \nabla v \, dx
 + \int_{T' \cup T''} \big( f + {\rm div}_\blacktriangle(\boldsymbol{A}\nabla u_\blacktriangle ) \big) \, v \, dx
 \\&\quad
 \le \norm{\boldsymbol{A}^{1/2}\nabla (u_\vartriangle - u_\blacktriangle)}{T' \cup T''} \, \norm{\boldsymbol{A}^{1/2}\nabla v}{T' \cup T''}
 + \norm{f + {\rm div}_\blacktriangle(\boldsymbol{A}\nabla u_\blacktriangle )}{T' \cup T''} \, \norm{v}{T' \cup T''}.
\end{split}
\end{align}
With $h_E \simeq h_{T'} \simeq h_{T''}$, an inverse estimate and $0\le\beta_E\le1$ prove that
\begin{align*}
 h_E \, \norm{\boldsymbol{A}^{1/2}\nabla v}{T' \cup T''} 
 \lesssim \norm{v}{T' \cup T''}
 \le \norm{r_\blacktriangle}{T' \cup T''}
 \lesssim h_E^{1/2} \, \norm{[\boldsymbol{A}\nabla u_\blacktriangle \cdot n]}{E}.
\end{align*}
This leads to 
\begin{align*}
 \varrho_\blacktriangle(E,u_\blacktriangle) = h_E^{1/2} \, \norm{[\boldsymbol{A}\nabla u_\blacktriangle \cdot n]}{E}
 \lesssim \norm{\boldsymbol{A}^{1/2}\nabla (u_\vartriangle - u_\blacktriangle)}{T' \cup T''}
 + h_E \, \norm{f + {\rm div}_\blacktriangle(\boldsymbol{A}\nabla u_\blacktriangle )}{T'\cup T''}
\end{align*}
and concludes the proof.
\end{proof}

\begin{proof}[Proof of~\eqref{eq1:rho:eff}]
The proof is split into three steps.

\textbf{Step 1.}
First, we consider
$d=2$ and
 $p\ge 2$. 
Since NVB is a binary refinement rule, there exists a coarsest refinement $\mathcal{T}_\star \in {\rm nvb}(\mathcal{T}_\blacktriangle)$,
where $T$ is only bisected once into triangles $T_1, T_2 \in \mathcal{T}_\star$, i.e., there exists $E = {\rm conv}\{z_1,z_2\} \in \mathcal{E}_\star$ which bisects the interior of $T$, such that $z_1 \in {\mathcal{N}}_\blacktriangle \subset {\mathcal{N}}_\vartriangle$ and $z_2 \in {\mathcal{N}}_\star \backslashÂ {\mathcal{N}}_\blacktriangle \subseteq {\mathcal{N}}_\vartriangle \backslashÂ {\mathcal{N}}_\blacktriangle$. With the corresponding hat functions $\phi_{\star,j}\in\mathcal{P}^1(\{T_1,T_2\})$, define the discrete bubble function
\begin{align}\label{eq:betaT}
\beta_T = \prod_{j=1}^{2} \phi_{\star,j} \in \mathcal{P}^{2}(\{T_1,T_2\}) \cap H^1_0(T) \subset {\mathcal{S}}^{2}_0(\mathcal{T}_\vartriangle)
\quad\text{with }{\rm supp}(\beta_T) = T;
\end{align}
we note that $\beta_T$ is, in fact, the ``classical'' edge bubble for the new edge $E$.
Recall that,  $\Pi_\blacktriangle$ is the $L^2(\Omega)$-orthogonal projection onto $\mathcal{P}^{p-2}(\mathcal{T}_\blacktriangle)$.
Let $q_\blacktriangle := \Pi_\blacktriangle (f + {\rm div}_\blacktriangle(\boldsymbol{A}\nabla u_\blacktriangle )) = \Pi_\blacktriangle f + {\rm div}_\blacktriangle(\boldsymbol{A}\nabla u_\blacktriangle ) \in \mathcal{P}^{p-2}(\mathcal{T}_\blacktriangle)$, where ${\rm div}_\blacktriangle$ is the $\mathcal{T}_\blacktriangle$-piecewise divergence and hence ${\rm div}_\blacktriangle(\boldsymbol{A}\nabla u_\blacktriangle ) \in \mathcal{P}^{p-2}(\mathcal{T}_\blacktriangle)$. Define $v := q_\blacktriangle \beta_T \in H^1_0(T) \cap {\mathcal{S}}^{p}_0(\mathcal{T}_\vartriangle)$. A scaling argument proves that
\begin{align*}
 \norm{q_\blacktriangle}{T}^2 
 \simeq \norm{q_\blacktriangle\beta_T^{1/2}}{T}^2
 = \int_T \big(q_\blacktriangle - (f + {\rm div}(\boldsymbol{A}\nabla u_\blacktriangle ))\big)\,v\,dx
 + \int_T (f + {\rm div}(\boldsymbol{A}\nabla u_\blacktriangle ))\,v\,dx.
\end{align*}
The first integral is estimated by 
\begin{align*}
 \int_T \big(q_\blacktriangle - (f + {\rm div}(\boldsymbol{A}\nabla u_\blacktriangle ))\big)\,v\,dx
 \le \norm{q_\blacktriangle - (f + {\rm div}(\boldsymbol{A}\nabla u_\blacktriangle ))}{T} \, \norm{v}{T}
 = \norm{(1-\Pi_\blacktriangle)f}{T} \, \norm{v}{T}.
\end{align*}
For the second integral, integration by parts and $v \in H^1_0(T)$ prove that
\begin{align*}
 \int_T (f + {\rm div}(\boldsymbol{A}\nabla u_\blacktriangle ))\,v\,dx
 =  \int_T fv\,dx - \int_T \boldsymbol{A}\nabla u_\blacktriangle \cdot \nabla v\,dx.
\end{align*}
Recall  that $v \in  H^1_0(T)\cap {\mathcal{S}}^p_0(\mathcal{T}_\vartriangle)$. Therefore,
\begin{align}\label{eq:rho:eff:problem}
\begin{split}
 \int_T (f + {\rm div}(\boldsymbol{A}\nabla u_\blacktriangle ))\,v\,dx
& \reff{eq:discrete}= \int_T \boldsymbol{A}\nabla (u_\vartriangle - u_\blacktriangle) \cdot \nabla v\,dx
 \\
 &\le \norm{\boldsymbol{A}^{1/2}\nabla (u_\vartriangle - u_\blacktriangle)}{T} \, \norm{\boldsymbol{A}^{1/2}\nabla v}{T}.
 \end{split}
\end{align}
An inverse estimate and $0 \le \beta_T \le 1$ prove that $h_T \, \norm{\boldsymbol{A}^{1/2}\nabla v}{T} \lesssim \norm{v}{T}
\le \norm{q_\blacktriangle}{T}$. Hence, 
\begin{align*}
 h_T \, \norm{q_\blacktriangle}{T}
 \lesssim h_T \, \norm{(1-\Pi_\blacktriangle)f}{T} + \norm{\boldsymbol{A}^{1/2}\nabla (u_\vartriangle - u_\blacktriangle)}{T}.
\end{align*}
The triangle inequality and $f + {\rm div}(\boldsymbol{A}\nabla u_\blacktriangle ) - q_\blacktriangle = (1 - \Pi_\blacktriangle) f$ yield that
\begin{align*}
 \varrho_\blacktriangle(T,v_\blacktriangle) = h_T \, \norm{f + {\rm div}(\boldsymbol{A}\nabla u_\blacktriangle )}{T}
 \le h_T \, \norm{(1 - \Pi_\blacktriangle)f}{T} + h_T\, \norm{q_\blacktriangle}{T}.
\end{align*}
Combining the last two estimates, we prove~\eqref{eq1:rho:eff} for $d=2$ and $p\ge 2$.

\textbf{Step 2.}
For $d=3$ and $p \ge 2$, we suppose that each facet of $T$ contains an interior node.
Since NVB is a binary refinement rule, there exists a coarsest refinement $\mathcal{T}_\star \in {\rm nvb}(\mathcal{T}_\blacktriangle)$ with this property. Then, $T$ is refined as depicted in Figure~\ref{fig:bisec17}.
Consider the product of hat functions for the highlighted nodes $\frac{1}{2}(\frac{1}{2}(z_{\pi(1)}+z_{\pi(4)})+z_{\pi(3)})$ and $\frac{1}{2}(z_{\pi(2)}+z_{\pi(3)})$ of Figure~\ref{fig:bisec17}.
This provides a discrete element bubble function $\beta_T\in\mathcal{P}^2(\{T\})\cap H_0^1(T)$.
Arguing as in Step~1, we conclude~\eqref{eq1:rho:eff}.

\textbf{Step 3.}
Finally, suppose that $d \ge 2$, $p\ge 1$, and $T$ contains an interior node $z \in {\mathcal{N}}_\vartriangle$.
Since NVB is a binary refinement rule, there exists a coarsest refinement $\mathcal{T}_\star \in {\rm nvb}(\mathcal{T}_\blacktriangle)$ such that $T$ contains an interior node $z \in {\mathcal{N}}_\star$.
In particular, the corresponding hat function $\beta_T := \phi_{\star,z}Â \in {\mathcal{S}}^1_0(\mathcal{T}_\star) \subseteq  {\mathcal{S}}^1_0(\mathcal{T}_\vartriangle)$ 
satisfies that ${\rm supp}(\beta_T) \subseteq T$ and $|{\rm supp}(\beta_T)| \simeq |T|$, and
may thus serve as an element bubble function. 
Defining $q_\blacktriangle := \pi_\blacktriangle (f + {\rm div}_\blacktriangle(\boldsymbol{A}\nabla u_\blacktriangle )) = \pi_\blacktriangle f + {\rm div}_\blacktriangle(\boldsymbol{A}\nabla u_\blacktriangle ) \in \mathcal{P}^{p-1}(\mathcal{T}_\blacktriangle)$, we conclude~\eqref{eq1:rho:eff} as in Step~1.
\end{proof}%

\section{Proof of Theorem~\ref{thm:abstract}}
\label{section:proof}

\subsection{Proof of efficiency and reliability~(\ref{eq:reliable})}
Recall the different estimators from~\eqref{table}. 
The proof is split into several steps. 

{\bf Step~1.} 
We recall that the residual error estimator \eqref{eq:eta} satisfies that
\begin{align}\label{eq:aux3}
 \norm{\boldsymbol{A}^{1/2}\nabla(u-u_\bullet)}{\Omega} + {\rm osc}_\bullet
\stackrel{\eqref{eq:rho:releff}} \simeq \varrho_\bullet(u_\bullet)
 \simeq \varrho_\bullet(\mathcal{E}_\bullet,u_\bullet) + \varrho_\bullet(\mathcal{T}_\bullet,u_\bullet).
\end{align}
Moreover, the stability from Remark~\ref{rem:local stability rho} implies that
\begin{align}\label{eq:aux4}
 \norm{\boldsymbol{A}^{1/2}\nabla(\widehat u_\bullet - u_\bullet)}{\Omega}+ \varrho_\bullet(\mathcal{T}_\bullet)
 \simeq \norm{\boldsymbol{A}^{1/2}\nabla(\widehat u_\bullet - u_\bullet)}{\Omega}+ {\rm res}_\bullet(\mathcal{T}_\bullet,\widehat u_\bullet) .
\end{align}

{\bf Step~2.} 
According to~\eqref{eq:hh2:efficient}, it holds that
\begin{align*}
\lambda_\bullet(\widehat u_\bullet)
\le\norm{\boldsymbol{A}^{1/2}\nabla(\widehat u_\bullet-u_\bullet)}{\Omega}
 \le \norm{\boldsymbol{A}^{1/2}\nabla(u-u_\bullet)}{\Omega}.
 \end{align*}
Moreover, it holds that
\begin{align*}
  {\rm res}_\bullet(\mathcal{T}_\bullet,\widehat u_\bullet)^2
&  \reff{eq:aux4}\lesssim  
 \norm{\boldsymbol{A}^{1/2}\nabla(\widehat u_\bullet-u_\bullet)}{\Omega}^2 
 + \varrho_\bullet(\mathcal{T}_\bullet)^2 \reff{eq:aux3}\lesssim 
 \norm{\boldsymbol{A}^{1/2}\nabla(u-u_\bullet)}{\Omega}^2 + {\rm osc}_\bullet^2.
\end{align*}
In any case (cf.~\eqref{table}), the estimator equivalence~\eqref{eq2:hh2:equivalent} proves efficiency
$$
\eta(\widehat u_\bullet)^2 \lesssim \norm{\boldsymbol{A}^{1/2}\nabla(u-u_\bullet)}{\Omega}^2 + {\rm osc}_\bullet^2.
$$

{\bf Step~3.} 
Recall that the refinement employed in Algorithm~\ref{algorithm} satisfies (M3). Therefore, Lemma~\ref{lemma:rho:efficiency} implies that
\begin{align*}
 \varrho_\bullet(\mathcal{E}_\bullet) 
 \lesssim \norm{\boldsymbol{A}^{1/2}\nabla(\widehat u_\bullet - u_\bullet)}{\Omega} + \varrho_\bullet(\mathcal{T}_\bullet).
\end{align*}
Hence, we are led to
\begin{align}\label{eq:some reliability}
 \norm{\boldsymbol{A}^{1/2}\nabla(u-u_\bullet)}{\Omega} + {\rm osc}_\bullet
 \reff{eq:aux3}\simeq \varrho_\bullet(\mathcal{E}_\bullet) + \varrho_\bullet(\mathcal{T}_\bullet)
 &\lesssim \norm{\boldsymbol{A}^{1/2}\nabla(\widehat u_\bullet - u_\bullet)}{\Omega} + \varrho_\bullet(\mathcal{T}_\bullet).
\end{align}
In the first and fourth case of \eqref{table}, the equivalence \eqref{eq2:hh2:equivalent} of the $(h-h/2)$-type error estimators shows that  $\eta_\bullet(\widehat u_\bullet) \simeq \norm{\boldsymbol{A}^{1/2}\nabla(\widehat u_\bullet - u_\bullet)}{\Omega} + {\rm res}_\bullet(\mathcal{T}_\bullet,\widehat u_\bullet)$. 
This yields that
\begin{align*}
 \norm{\boldsymbol{A}^{1/2}\nabla(\widehat u_\bullet - u_\bullet)}{\Omega} + \varrho_\bullet(\mathcal{T}_\bullet) \reff{eq:aux4}\simeq \norm{\boldsymbol{A}^{1/2}\nabla(\widehat u_\bullet - u_\bullet)}{\Omega}
 + {\rm res}_\bullet(\mathcal{T}_\bullet,\widehat u_\bullet)
 \simeq \eta_\bullet(\widehat u_\bullet).
\end{align*}
In the other cases of \eqref{table},  the  equivalence \eqref{eq2:hh2:equivalent} shows that  $\eta_\bullet(\widehat u_\bullet) \simeq \norm{\boldsymbol{A}^{1/2}\nabla(\widehat u_\bullet - u_\bullet)}{\Omega} + {\rm osc}_\bullet$.
We recall that according to~\eqref{table}, it holds that either $p\ge2$ or that the refinement ensures (M3').
Therefore, Lemma~\ref{lemma:rho:efficiency} implies again that
\begin{align*}
\varrho_\bullet(\mathcal{T}_\bullet) 
\lesssim \norm{\boldsymbol{A}^{1/2}\nabla(\widehat u_\bullet - u_\bullet)}{\Omega} + {\rm osc}_\bullet. 
\end{align*}
Then, we are led to
\begin{align*}
 \norm{\boldsymbol{A}^{1/2}\nabla(\widehat u_\bullet - u_\bullet)}{\Omega} + \varrho(\mathcal{T}_\bullet)\,\lesssim\, \norm{\boldsymbol{A}^{1/2}\nabla(\widehat u_\bullet - u_\bullet)}{\Omega} + {\rm osc}_\bullet
 \simeq \eta(\widehat u_\bullet).
\end{align*}
In any case, this proves that
$$
\norm{\boldsymbol{A}^{1/2}\nabla(u-u_\bullet)}{\Omega}^2 + {\rm osc}_\bullet^2 \lesssim \eta(\widehat u_\bullet)^2.
$$
This concludes the proof.\qed

\subsection{Proof of~(\ref{eq:linear})--(\ref{eq:optimal})}
In the following, we verify that the $\lambda_\bullet$-based error estimators $\eta_\bullet$ from~\eqref{table} satisfy the \emph{axioms of adaptivity} from~\cite{axioms}. 
To prove linear convergence with optimal rates for the $\mu_\bullet$-based error estimators, we then exploit the local equivalence~\eqref{eq1:hh2:equivalent}.
We stress that unlike the various {\sl a~posteriori} error estimators in~\cite{ks,axioms}, the $(h-h/2)$-type estimators $\eta_\bullet$ are \emph{not} locally equivalent to the residual error estimator. Throughout, let $\mathcal{T}_\bullet \in {\rm nvb}(\mathcal{T}_0)$.

\begin{lemma}[local stability of $\boldsymbol{\lambda_\bullet(\cdot)}$]\label{lem:local stability lambda}
Let $\mathcal{T}_\circ \in {\rm nvb}(\mathcal{T}_\bullet)$.
For all $\widehat v_\bullet \in {\mathcal{S}}^p_0(\widehat{\mathcal{T}}_\bullet)$ and all
$\widehat v_\circ \in {\mathcal{S}}^p_0(\widehat{\mathcal{T}}_\circ)$, it  holds that
\begin{align}\label{eq:mu:stability}
 |\lambda_\circ(T,\widehat v_\circ) - \lambda_\bullet(T,\widehat v_\bullet) | 
 \le \norm{\boldsymbol{A}^{1/2}\nabla(\widehat v_\circ - \widehat v_\bullet)}{T}
 \quad \text{for all } T \in \mathcal{T}_\bullet \cap \mathcal{T}_\circ.
\end{align}
In particular, this implies that
\begin{align}\label{eq:mu:stability2}
 |\lambda_\circ(\mathcal{T}_\bullet \cap \mathcal{T}_\circ,\widehat v_\circ) 
 - \lambda_\bullet(\mathcal{T}_\bullet \cap \mathcal{T}_\circ,\widehat v_\bullet) | 
 \le  \norm{\boldsymbol{A}^{1/2}\nabla(\widehat v_\circ - \widehat v_\bullet)}{\bigcup(\mathcal{T}_\bullet \cap \mathcal{T}_\circ)}.
\end{align}%
Further, there exists  $C_{\rm stb}>0$ such that the $\lambda_\bullet$-based estimators $\eta_\bullet$ from~\eqref{table} satisfy that
\begin{align}\label{eq:eta stable}
 |\eta_\circ({\mathcal{S}},\widehat v_\circ) 
 - \eta_\bullet({\mathcal{S}},\widehat v_\bullet) | 
 \le C_{\rm stb} \norm{\boldsymbol{A}^{1/2}\nabla(\widehat v_\circ - \widehat v_\bullet)}{\bigcup{\mathcal{S}}}\quad\text{for all }{\mathcal{S}}\subseteq \mathcal{T}_\bullet \cap \mathcal{T}_\circ.
\end{align}
The constant $C_{\rm stb}$ depends only on $d$,  $\boldsymbol{A}$, $p$, $C_{\rm son}$, and shape-regularity of $\mathcal{T}_0$.
\end{lemma}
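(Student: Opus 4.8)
The plan is to exploit that all involved $L^2$-projections act elementwise, so that they literally agree on elements shared by $\mathcal{T}_\bullet$ and $\mathcal{T}_\circ$: for every $T\in\mathcal{T}_\bullet\cap\mathcal{T}_\circ$, the operators $\pi_\bullet|_T$ and $\pi_\circ|_T$ both coincide with the $L^2(T)$-orthogonal projection onto $\mathcal{P}^{p-1}(\{T\})$, likewise $\Pi_\bullet|_T=\Pi_\circ|_T$, and $\boldsymbol{A}|_T$ is a fixed constant matrix because $\mathcal{T}_\bullet,\mathcal{T}_\circ\in{\rm nvb}(\mathcal{T}_0)$. For~\eqref{eq:mu:stability} I would use the representation~\eqref{eq:lambda2}, abbreviate $\pi_T:=\pi_\bullet|_T=\pi_\circ|_T$, and apply the reverse triangle inequality for $\norm{\cdot}{T}$,
\begin{align*}
 \big|\lambda_\circ(T,\widehat v_\circ)-\lambda_\bullet(T,\widehat v_\bullet)\big|
 = \big|\,\norm{\boldsymbol{A}^{1/2}(1-\pi_T)\nabla\widehat v_\circ}{T}-\norm{\boldsymbol{A}^{1/2}(1-\pi_T)\nabla\widehat v_\bullet}{T}\,\big|
 \le \norm{\boldsymbol{A}^{1/2}(1-\pi_T)\nabla(\widehat v_\circ-\widehat v_\bullet)}{T}.
\end{align*}
Since $\boldsymbol{A}^{1/2}$ commutes with the componentwise projection $\pi_T$ (this is precisely the point of~\eqref{eq:lambda2}) and $1-\pi_T$ is an $L^2(T)$-orthogonal projection, hence a contraction, the right-hand side is $\le\norm{\boldsymbol{A}^{1/2}\nabla(\widehat v_\circ-\widehat v_\bullet)}{T}$. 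This settles~\eqref{eq:mu:stability} with constant $1$.

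Estimate~\eqref{eq:mu:stability2} then follows by squaring~\eqref{eq:mu:stability}, summing over the pairwise disjoint $T\in\mathcal{T}_\bullet\cap\mathcal{T}_\circ$, and using the reverse triangle inequality in $\ell^2$ applied to the vectors $\bigl(\lambda_\circ(T,\widehat v_\circ)\bigr)_T$ and $\bigl(\lambda_\bullet(T,\widehat v_\bullet)\bigr)_T$: the resulting bound $\bigl(\sum_T\norm{\boldsymbol{A}^{1/2}\nabla(\widehat v_\circ-\widehat v_\bullet)}{T}^2\bigr)^{1/2}$ equals $\norm{\boldsymbol{A}^{1/2}\nabla(\widehat v_\circ-\widehat v_\bullet)}{\bigcup(\mathcal{T}_\bullet\cap\mathcal{T}_\circ)}$, because the $L^2$-norms over the elements add up to the $L^2$-norm over their union. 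The same $\ell^2$-argument (summing over $T\in{\mathcal S}$, then over the two components of $\eta_\bullet(T,\cdot)^2$) will reduce~\eqref{eq:eta stable} to a pointwise estimate for a single $T\in\mathcal{T}_\bullet\cap\mathcal{T}_\circ$.

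It therefore remains to control, for fixed $T\in\mathcal{T}_\bullet\cap\mathcal{T}_\circ$, the additive term of $\eta_\bullet$ in~\eqref{table}. If it is ${\rm osc}_\bullet(T)$ or ${\rm apx}_\bullet(T)$, then it depends only on $f$, $h_T$, and $\pi_\bullet|_T$ resp.\ $\Pi_\bullet|_T$, so the elementwise agreement above gives ${\rm osc}_\bullet(T)={\rm osc}_\circ(T)$ and ${\rm apx}_\bullet(T)={\rm apx}_\circ(T)$, and there is nothing to add (so that $C_{\rm stb}=1$ in these cases). For the term ${\rm res}_\bullet(T,\widehat v_\bullet)$ one first notes that $\widehat{\mathcal{T}}_\bullet$ and $\widehat{\mathcal{T}}_\circ$ are uniform refinements, so $T$ is ``marked'' in both refinement steps; hence~\textbf{(M2)} yields $\set{T'\in\widehat{\mathcal{T}}_\bullet}{T'\subseteq T}=\set{T'\in\widehat{\mathcal{T}}_\circ}{T'\subseteq T}$, and on each such son $\widehat v_\bullet-\widehat v_\circ$ restricts to a polynomial of degree $\le p$, so ${\rm div}(\boldsymbol{A}\nabla(\widehat v_\bullet-\widehat v_\circ))|_{T'}\in\mathcal{P}^{\max\{p-2,0\}}(\{T'\})$. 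Since the $f$-contribution cancels in the difference, the reverse triangle inequality in $\ell^2$ over the common set of sons gives
\begin{align*}
 \big|{\rm res}_\circ(T,\widehat v_\circ)-{\rm res}_\bullet(T,\widehat v_\bullet)\big|
 \le h_T\Bigl(\sum_{\substack{T'\in\widehat{\mathcal{T}}_\bullet \\ T'\subseteq T}}\norm{{\rm div}(\boldsymbol{A}\nabla(\widehat v_\bullet-\widehat v_\circ))}{T'}^2\Bigr)^{1/2}
 \lesssim \norm{\boldsymbol{A}^{1/2}\nabla(\widehat v_\bullet-\widehat v_\circ)}{T},
\end{align*}
where the last step uses an inverse estimate on each $T'$ together with $h_{T'}\simeq h_T$, the equivalence constant depending on $C_{\rm son}$ since $T$ has at most $C_{\rm son}$ sons. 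Combining this with~\eqref{eq:mu:stability} via the $\ell^2$-argument above proves~\eqref{eq:eta stable}, with $C_{\rm stb}$ absorbing the inverse-estimate constant and thus depending only on $d$, $\boldsymbol{A}$, $p$, $C_{\rm son}$, and the shape regularity of $\mathcal{T}_0$.

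The only step that needs genuine care is this last one: one has to recognize, via property~\textbf{(M2)}, that the uniform refinements of $\mathcal{T}_\bullet$ and $\mathcal{T}_\circ$ agree on shared coarse elements — only then is $\widehat v_\bullet-\widehat v_\circ$ piecewise polynomial on the fine mesh inside such an element and an inverse estimate applicable. Everything else is bookkeeping with reverse triangle inequalities for $L^2$- and $\ell^2$-norms, and uses no refinement property beyond those already recalled.
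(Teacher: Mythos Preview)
Your proof is correct and follows essentially the same approach as the paper: both exploit that $\pi_\bullet$ and $\pi_\circ$ agree on shared elements to get~\eqref{eq:mu:stability} with constant~$1$ via the (reverse) triangle inequality and the contraction property of orthogonal projections, and both handle the ${\rm osc}_\bullet$/${\rm apx}_\bullet$ cases by noting these terms coincide on $T\in\mathcal{T}_\bullet\cap\mathcal{T}_\circ$. For the ${\rm res}_\bullet$ case the paper observes ${\rm res}_\bullet(T,\widehat v_\bullet)={\rm res}_\circ(T,\widehat v_\bullet)$ (which is precisely your use of~\textbf{(M2)}) and then invokes the stability estimate for the residual indicator (Remark~\ref{rem:local stability rho}) rather than writing out the inverse estimate explicitly, but the content is identical.
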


\begin{proof}
We prove the lemma in two steps.

{\bf Step~1.}
Note that $\pi_\circ$ and $\pi_\bullet$ coincide on $T$.
The triangle inequality thus proves that
\begin{align*}
 \lambda_\circ(T,\widehat v_\circ) 
 = \norm{(1-\pi_\bullet) \boldsymbol{A}^{1/2}\nabla \widehat v_\circ}{T}
 \le  \lambda_\bullet(T,\widehat v_\bullet)
 + \norm{(1-\pi_\bullet)\boldsymbol{A}^{1/2}\nabla(\widehat v_\circ - \widehat v_\bullet)}{T}.
\end{align*}
The same argument shows that
\begin{align*}
 \lambda_\bullet(T,\widehat v_\bullet) 
 = \norm{(1-\pi_\bullet) \boldsymbol{A}^{1/2}\nabla \widehat v_\bullet}{T}
 \le  \lambda_\circ(T,\widehat v_\circ)
 + \norm{(1-\pi_\bullet)\boldsymbol{A}^{1/2}\nabla(\widehat v_\circ - \widehat v_\bullet)}{T}.
\end{align*}
Together with $\norm{(1-\pi_\bullet)\boldsymbol{A}^{1/2}\nabla(\widehat v_\circ - \widehat v_\bullet)}{T} 
\le \norm{\boldsymbol{A}^{1/2}\nabla(\widehat v_\circ - \widehat v_\bullet)}{T}$, we conclude the proof of \eqref{eq:mu:stability}, which immediately yields \eqref{eq:mu:stability2}.

{\bf Step~2.}
If   $\eta_\bullet(\widehat v_\bullet)^2=\lambda_\bullet(\widehat v_\bullet)^2+{\rm osc}_\bullet^2$,  \eqref{eq:eta stable} follows from Step~1 and the fact that ${\rm osc}_\circ(T)={\rm osc}_\bullet(T)$ for all $T\in\mathcal{T}_\bullet\cap\mathcal{T}_\circ$. 
The same argument works if $\eta_\bullet(\widehat v_\bullet)^2=\lambda_\bullet(\widehat v_\bullet)^2+{\rm apx}_\bullet^2$.
If   $\eta_\bullet(\widehat v_\bullet)^2=\lambda_\bullet(\widehat v_\bullet)^2+{\rm res}_\bullet(\widehat v_\bullet)^2$, we note that 
${\rm res}_\bullet(T,\widehat v_\bullet)={\rm res}_\circ(T,\widehat v_\bullet)$ for all $T\in\mathcal{T}_\bullet\cap\mathcal{T}_\circ$.
Therefore, \eqref{eq:eta stable} follows from Step~1 and Remark~\ref{rem:local stability rho} with $\mathcal{T}_\blacktriangle=\mathcal{T}_\circ$. 
\end{proof}

\begin{lemma}[local reduction of $\boldsymbol{\lambda_\bullet(\cdot)}$]\label{lem:local reduction lambda}
Let $\mathcal{M}_\bullet\subseteq\mathcal{T}_\bullet$ and $\mathcal{T}_\circ\in{\rm nvb}(\mathcal{T}_\vartriangle)$ with $\mathcal{T}_\vartriangle={\rm refine}(\mathcal{T}_\bullet,\mathcal{M}_\bullet)$.
For all $\widehat v_\bullet \in {\mathcal{S}}^p_0(\widehat{\mathcal{T}}_\bullet)$, it  holds that
\begin{align}\label{eq1:mu:reduction}
 \lambda_\circ(\set{T' \in \mathcal{T}_\circ}{T' \subset T},\widehat v_\bullet) = 0
 \quad \text{for all }  T \in \mathcal{M}_\bullet.
\end{align}
In particular, this implies that
\begin{align}\label{eq2:mu:reduction}
 \lambda_\circ\big(\set{T'\in\mathcal{T}_\circ}{T'\subset\bigcup\mathcal{M}_\bullet},\widehat v_\circ\big) \le 
 \norm{\boldsymbol{A}^{1/2}\nabla(\widehat v_\circ - \widehat v_\bullet)}{\bigcup\mathcal{M}_\bullet}. 
\end{align}
Further, there exist constants $0<q_{\rm red}<1$ and $C_{\rm red}>0$ such that the $\lambda_\bullet$-based estimators $\eta_\bullet$ from~\eqref{table} satisfy that 
\begin{align}\label{eq3:mu:reduction}
 \eta_\circ\big(\set{T'\in\mathcal{T}_\circ}{T'\subset\bigcup\mathcal{M}_\bullet},\widehat v_\circ\big) \le q_{\rm red} \, \eta_\bullet(\mathcal{M}_\bullet,\widehat v_\bullet)
+ C_{\rm red}\, \norm{\boldsymbol{A}^{1/2}\nabla(\widehat v_\circ - \widehat v_\bullet)}{\bigcup\mathcal{M}_\bullet}. 
\end{align}
The constant $q_{\rm red}$  depends only on $d$, while $C_{\rm red}$ depends additionally on  $\boldsymbol{A}$, $p$, $C_{\rm son}$, and shape-regularity of $\mathcal{T}_0$.
\end{lemma}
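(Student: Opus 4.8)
The plan is to establish the three assertions in order, each building on the previous one; the whole lemma is essentially a ``mesh-size reduction plus stability'' statement for the $\lambda_\bullet$-type estimators. \emph{Step~1 (the identity~\eqref{eq1:mu:reduction}).} First I would use~(M2) to pin down the local mesh on a marked element. Applying~(M2) with $\mathcal{T}_\blacktriangle=\mathcal{T}_\bullet$ and $\mathcal{M}_\blacktriangle=\mathcal{T}_\bullet$, so that the corresponding refinement is the uniform one $\widehat{\mathcal{T}}_\bullet={\rm refine}(\mathcal{T}_\bullet,\mathcal{T}_\bullet)$, yields $\set{T'\in\mathcal{T}_\vartriangle}{T'\subset T}=\set{T'\in\widehat{\mathcal{T}}_\bullet}{T'\subset T}$ for every $T\in\mathcal{M}_\bullet$. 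Since $\mathcal{T}_\circ\in{\rm nvb}(\mathcal{T}_\vartriangle)$ refines $\mathcal{T}_\vartriangle$, each $T'\in\mathcal{T}_\circ$ with $T'\subset T$ is contained in a single element of $\widehat{\mathcal{T}}_\bullet$, hence $\widehat v_\bullet|_{T'}\in\mathcal{P}^p(\{T'\})$; since $\boldsymbol{A}$ is constant on $T\in\mathcal{M}_\bullet\subset{\rm nvb}(\mathcal{T}_0)$, also $\boldsymbol{A}^{1/2}\nabla\widehat v_\bullet|_{T'}\in\mathcal{P}^{p-1}(\{T'\})$, so $(1-\pi_\circ)\boldsymbol{A}^{1/2}\nabla\widehat v_\bullet$ vanishes on $T'$, i.e.\ $\lambda_\circ(T',\widehat v_\bullet)=0$, which proves~\eqref{eq1:mu:reduction}.

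\emph{Step~2 (the estimate~\eqref{eq2:mu:reduction}).} Every $T'\in\mathcal{T}_\circ$ with $T'\subset\bigcup\mathcal{M}_\bullet$ lies in a unique $T\in\mathcal{M}_\bullet$, since $\mathcal{T}_\circ$ refines $\mathcal{T}_\bullet$. The triangle inequality, Step~1, and the $L^2$-contractivity of $(1-\pi_\circ)$ give $\lambda_\circ(T',\widehat v_\circ)\le\lambda_\circ(T',\widehat v_\bullet)+\norm{(1-\pi_\circ)\boldsymbol{A}^{1/2}\nabla(\widehat v_\circ-\widehat v_\bullet)}{T'}\le\norm{\boldsymbol{A}^{1/2}\nabla(\widehat v_\circ-\widehat v_\bullet)}{T'}$. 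Squaring and summing over the disjoint $T'$, whose union equals $\bigcup\mathcal{M}_\bullet$, yields~\eqref{eq2:mu:reduction}.

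\emph{Step~3 (the reduction~\eqref{eq3:mu:reduction}).} Write $\mathcal{S}_\circ:=\set{T'\in\mathcal{T}_\circ}{T'\subset\bigcup\mathcal{M}_\bullet}$ and split $\eta_\circ(\mathcal{S}_\circ,\widehat v_\circ)^2=\lambda_\circ(\mathcal{S}_\circ,\widehat v_\circ)^2+D_\circ^2$, where $D_\circ$ is the corresponding data term ${\rm res}_\circ(\mathcal{S}_\circ,\widehat v_\circ)$, ${\rm osc}_\circ(\mathcal{S}_\circ)$, or ${\rm apx}_\circ(\mathcal{S}_\circ)$ from~\eqref{table}; the $\lambda_\circ$-part is handled by~\eqref{eq2:mu:reduction}. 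The key for $D_\circ$ is mesh-size reduction: since $\mathcal{M}_\bullet\subseteq\mathcal{T}_\bullet\setminus\mathcal{T}_\vartriangle$, every marked element is bisected at least once, so $h_{T'}^2\le 2^{-2/d}h_T^2$ whenever $T'\in\mathcal{T}_\circ$, $T'\subset T\in\mathcal{M}_\bullet$. For $D={\rm osc}$ (or ${\rm apx}$), $\pi_\circ$ (or $\Pi_\circ$) is at least as good an elementwise $L^2$-approximation of $f$ as $\pi_\bullet$ (or $\Pi_\bullet$), and since the sons partition $T$ one obtains $D_\circ\le 2^{-1/d}D_\bullet(\mathcal{M}_\bullet)$. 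For $D={\rm res}$, the same reduction together with~(M2) — which makes $\widehat{\mathcal{T}}_\circ$ refine $\widehat{\mathcal{T}}_\bullet$ on each marked $T$, so the two piecewise divergences of $\boldsymbol{A}\nabla\widehat v_\bullet$ coincide there — gives ${\rm res}_\circ(\mathcal{S}_\circ,\widehat v_\bullet)\le 2^{-1/d}{\rm res}_\bullet(\mathcal{M}_\bullet,\widehat v_\bullet)$, and Minkowski's inequality plus an inverse estimate on $\widehat{\mathcal{T}}_\circ$ (whose constant depends on $C_{\rm son}$) yield ${\rm res}_\circ(\mathcal{S}_\circ,\widehat v_\circ)\le 2^{-1/d}{\rm res}_\bullet(\mathcal{M}_\bullet,\widehat v_\bullet)+C\,\norm{\boldsymbol{A}^{1/2}\nabla(\widehat v_\circ-\widehat v_\bullet)}{\bigcup\mathcal{M}_\bullet}$. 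Combining the $\lambda_\circ$- and $D_\circ$-bounds, bounding $D_\bullet(\mathcal{M}_\bullet)^2\le\eta_\bullet(\mathcal{M}_\bullet,\widehat v_\bullet)^2$, and absorbing the cross terms with a Young inequality (enlarging $2^{-2/d}$ to some $q_{\rm red}^2<1$ that still depends only on $d$) gives~\eqref{eq3:mu:reduction}, with $C_{\rm red}$ depending only on $d$, $\boldsymbol{A}$, $p$, $C_{\rm son}$, and the shape regularity of $\mathcal{T}_0$.

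\emph{Main obstacle.} The delicate point is the ${\rm res}$-case in Step~3: unlike ${\rm osc}$ and ${\rm apx}$, the residual term depends on the discrete function and cannot be ``projected down'' directly. One has to invoke~(M2) to identify the local meshes so that the relevant piecewise-divergence operators agree on marked elements, peel off the contribution of $\widehat v_\circ-\widehat v_\bullet$ via an inverse estimate on the uniformly refined mesh, and then carry out the Young-inequality bookkeeping that turns the bare mesh-size factor $2^{-1/d}$ into a genuine contraction $q_{\rm red}<1$ while keeping the additive constant $C_{\rm red}$ under control and keeping $q_{\rm red}$ dependent on $d$ only.
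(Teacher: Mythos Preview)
Your proof is correct and follows essentially the same approach as the paper: (M2) to identify $\mathcal{T}_\vartriangle|_T=\widehat{\mathcal{T}}_\bullet|_T$ on marked elements for~\eqref{eq1:mu:reduction}, triangle inequality plus projection contractivity for~\eqref{eq2:mu:reduction}, and mesh-size reduction $h_{T'}\le 2^{-1/d}h_T$ combined with stability (inverse estimate) for the data terms and a Young inequality to obtain~\eqref{eq3:mu:reduction}. Your elementwise argument for~\eqref{eq2:mu:reduction} is in fact slightly more direct than the paper's Young-inequality-with-$\delta\to0$ route, and what you spell out for the ${\rm res}$-case is exactly the content of the paper's appeal to Remark~\ref{rem:local stability rho}.
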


\begin{proof}
We prove the lemma in two steps. 

{\bf Step~1.}
Recall that NVB is a binary refinement rule. Therefore, $T \in \mathcal{M}_\bullet$ and (M2) imply that $\mathcal{T}_\circ|_T := \set{T' \in \mathcal{T}_\circ}{T' \subset T}$ is finer than $\widehat{\mathcal{T}}_\bullet|_T=\mathcal{T}_\vartriangle|_T$. This proves that $\norm{(1-\pi_\circ)\boldsymbol{A}^{1/2}\nabla \widehat v_\bullet}{T} = 0$ and hence~\eqref{eq1:mu:reduction}. 
The triangle inequality, the fact that orthogonal projections have operator norm one, and the Young inequality prove for all $\delta>0$ that
\begin{align*}
 &\lambda_\circ\big(\set{T'\in\mathcal{T}_\circ}{T'\subset\bigcup\mathcal{M}_\bullet}, \widehat v_\circ\big)^2\\
 &\quad\le (1+\delta^{-1})\,  \lambda_\circ\big(\set{T'\in\mathcal{T}_\circ}{T'\subset\bigcup\mathcal{M}_\bullet},\widehat v_\bullet\big)^2
+ (1+\delta) \, \norm{(1-\pi_\circ)\boldsymbol{A}^{1/2}\nabla(\widehat v_\circ - \widehat v_\bullet)}{\bigcup\mathcal{M}_\bullet}^2\\
&\quad \reff{eq1:mu:reduction}\le (1+\delta) \, \norm{\boldsymbol{A}^{1/2}\nabla(\widehat v_\circ - \widehat v_\bullet)}{\bigcup\mathcal{M}_\bullet}^2.
\end{align*}
With $\delta \to 0$, this concludes the proof of \eqref{eq2:mu:reduction}.

{\bf Step~2.}
Since $\norm{(1-\pi_\circ)(\cdot)}{T}\le\norm{(1-\pi_\bullet)(\cdot)}{T}$ for all $T\in\mathcal{T}_\bullet$ and  each marked element is bisected at least once, we have that 
\begin{align}\label{eq:reduction osc}
{\rm osc}_\circ\big(\set{T'\in\mathcal{T}_\circ}{T'\subset \bigcup\mathcal{M}_\bullet}\big)\le 2^{-1/d}\, {\rm osc}_\bullet(\mathcal{M}_\bullet).
\end{align}
The same argument is valid for the approximation terms ${\rm apx}_\circ$.
Moreover, Remark~\ref{rem:local stability rho} with the fact that each marked element is bisected at least once yields that 
\begin{align}\label{eq:reduction res}
\begin{split}
 {\rm res}_\circ\big(\set{T'\in\mathcal{T}_\circ}{T'\subset\bigcup\mathcal{M}_\bullet},\widehat v_\circ\big) &\le  2^{-1/d}\,{\rm res}_\bullet(\mathcal{M}_\bullet,\widehat v_\bullet)\\
 &\quad+C \,\norm{\boldsymbol{A}^{1/2}\nabla(\widehat v_\circ - \widehat v_\bullet)}{\bigcup\mathcal{M}_\bullet}.
\end{split}
\end{align}
Together with Step~1 and the Young inequality, \eqref{eq:reduction osc} and \eqref{eq:reduction res} conclude the proof.
\end{proof}

\begin{lemma}[discrete reliability of $\boldsymbol{\eta_{\bullet}(\cdot)}$]
\label{lemma:eta:drel}
There exists $C_{\rm drl}>0$ such that
\begin{align}\label{eq:eta:drel}
\norm{\boldsymbol{A}^{1/2}\nabla(\widehat u_\circ - \widehat u_\bullet)}{\Omega}
 \le C_{\rm drl}\,\eta_\bullet( \mathcal{T}_\bullet \backslash \mathcal{T}_\circ, \widehat u_\bullet)\quad\text{for all } 
\mathcal{T}_\circ \in {\rm nvb}( \mathcal{T}_\bullet ).
\end{align}
The constant $C_{\rm drl}$ depends only on $\boldsymbol{A}$, $p\ge 1$, and $\gamma$-shape regularity of $\mathcal{T}_0$.\qed
\end{lemma}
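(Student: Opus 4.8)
The plan is to transfer the discrete reliability \eqref{eq:rho:drel} of the residual error estimator from the coarse pair $(\mathcal{T}_\bullet,\mathcal{T}_\circ)$ to the \emph{uniformly refined} pair $(\widehat{\mathcal{T}}_\bullet,\widehat{\mathcal{T}}_\circ)$, and then to replace the resulting fine-mesh residual indicators on the refined region by the coarse-mesh quantities $\lambda_\bullet$ and ${\rm res}_\bullet$ (resp.\ ${\rm osc}_\bullet$, ${\rm apx}_\bullet$) that make up $\eta_\bullet$ in \eqref{table}. First I would note that $\mathcal{T}_\circ\in{\rm nvb}(\mathcal{T}_\bullet)$ together with {\rm(M1)--(M2)} and the binary nature of NVB implies $\widehat{\mathcal{T}}_\circ\in{\rm nvb}(\widehat{\mathcal{T}}_\bullet)$ (uniform refinement commutes with NVB). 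Since $\widehat u_\bullet\in{\mathcal{S}}^p_0(\widehat{\mathcal{T}}_\bullet)$ and $\widehat u_\circ\in{\mathcal{S}}^p_0(\widehat{\mathcal{T}}_\circ)$ are the respective Galerkin solutions and ${\mathcal{S}}^p_0(\widehat{\mathcal{T}}_\bullet)\subseteq{\mathcal{S}}^p_0(\widehat{\mathcal{T}}_\circ)$, applying \eqref{eq:rho:drel} with $\mathcal{T}_\blacktriangle=\widehat{\mathcal{T}}_\bullet$ and $\mathcal{T}_\vartriangle=\widehat{\mathcal{T}}_\circ$ gives
\begin{align*}
 \norm{\boldsymbol{A}^{1/2}\nabla(\widehat u_\circ-\widehat u_\bullet)}{\Omega}
 \lesssim\varrho_{\widehat{\mathcal{T}}_\bullet}\big((\widehat{\mathcal{T}}_\bullet\backslash\widehat{\mathcal{T}}_\circ)\cup(\mathcal{E}_{\widehat{\mathcal{T}}_\bullet}^{\Omega}\backslash\mathcal{E}_{\widehat{\mathcal{T}}_\circ}^{\Omega}),\widehat u_\bullet\big).
\end{align*}

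Next I would localize the right-hand side. By {\rm(M2)}, for every $T\in\mathcal{T}_\bullet\cap\mathcal{T}_\circ$ the uniform-refinement pattern inside $T$ is identical in $\widehat{\mathcal{T}}_\bullet$ and $\widehat{\mathcal{T}}_\circ$; hence every fine element in $\widehat{\mathcal{T}}_\bullet\backslash\widehat{\mathcal{T}}_\circ$ and every facet in $\mathcal{E}_{\widehat{\mathcal{T}}_\bullet}^{\Omega}\backslash\mathcal{E}_{\widehat{\mathcal{T}}_\circ}^{\Omega}$ interior to a coarse element lies in some $T\in\mathcal{T}_\bullet\backslash\mathcal{T}_\circ$, while any such facet lying on a coarse facet $E=T\cap T'$ has both coarse neighbours in $\mathcal{T}_\bullet\backslash\mathcal{T}_\circ$. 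It then suffices to bound, for each $T\in\mathcal{T}_\bullet\backslash\mathcal{T}_\circ$, the volume-residual and facet-jump contributions of $\varrho_{\widehat{\mathcal{T}}_\bullet}(\cdot,\widehat u_\bullet)$ attached to $T$.

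For the volume residuals I would use $\sum_{T'\in\widehat{\mathcal{T}}_\bullet,\,T'\subseteq T}\varrho_{\widehat{\mathcal{T}}_\bullet}(T',\widehat u_\bullet)^2\le{\rm res}_\bullet(T,\widehat u_\bullet)^2$, which holds by definition and $h_{T'}\le h_T$. For a fine facet $\widehat E$ interior to $T$, I would split $\widehat u_\bullet=(\widehat u_\bullet-I_\bullet\widehat u_\bullet)+I_\bullet\widehat u_\bullet$; since $\boldsymbol{A}$ is constant on $T$ and $I_\bullet\widehat u_\bullet|_T$ is a single polynomial, $\boldsymbol{A}\nabla I_\bullet\widehat u_\bullet$ is continuous across $\widehat E$, so a discrete trace inequality and an inverse estimate bound the total interior-facet contribution of $T$ by $\norm{\boldsymbol{A}^{1/2}\nabla(\widehat u_\bullet-I_\bullet\widehat u_\bullet)}{T}^2=\mu_\bullet(T,\widehat u_\bullet)^2\simeq\lambda_\bullet(T,\widehat u_\bullet)^2$ by Lemma~\ref{lem:simple}. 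For the fine facets on a coarse facet $E=T\cap T'$, the same splitting leaves a part $\lesssim\mu_\bullet(T,\widehat u_\bullet)^2+\mu_\bullet(T',\widehat u_\bullet)^2$ plus $\varrho_\bullet(E,I_\bullet\widehat u_\bullet)^2$, the jump of $\boldsymbol{A}\nabla I_\bullet\widehat u_\bullet$ across $E$ being a single polynomial along $E$ so that the sum over the fine subfacets collapses to the coarse indicator. To handle $\varrho_\bullet(E,I_\bullet\widehat u_\bullet)$ I would apply the local discrete efficiency \eqref{eq2:rho:eff} with $\mathcal{T}_\blacktriangle=\mathcal{T}_\bullet$, $\mathcal{T}_\vartriangle=\widehat{\mathcal{T}}_\bullet$ (so that $u_\vartriangle=\widehat u_\bullet$) and $v_\blacktriangle=I_\bullet\widehat u_\bullet$, the required interior node on $E$ being provided by {\rm(M3)}; this yields $\varrho_\bullet(E,I_\bullet\widehat u_\bullet)\lesssim\norm{\boldsymbol{A}^{1/2}\nabla(\widehat u_\bullet-I_\bullet\widehat u_\bullet)}{T\cup T'}+\varrho_\bullet(\{T,T'\},I_\bullet\widehat u_\bullet)$, where the first term equals $(\mu_\bullet(T,\widehat u_\bullet)^2+\mu_\bullet(T',\widehat u_\bullet)^2)^{1/2}$ and, by the local stability Lemma~\ref{lem:local stability rho} (see also Remark~\ref{rem:local stability rho}) and an inverse estimate, $\varrho_\bullet(T,I_\bullet\widehat u_\bullet)={\rm res}_\bullet(T,I_\bullet\widehat u_\bullet)\lesssim{\rm res}_\bullet(T,\widehat u_\bullet)+\mu_\bullet(T,\widehat u_\bullet)$. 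Summing over $T\in\mathcal{T}_\bullet\backslash\mathcal{T}_\circ$ and using $\mu_\bullet\simeq\lambda_\bullet$ then gives \eqref{eq:eta:drel} for the first estimator in \eqref{table}; for the variants with ${\rm osc}_\bullet$ (under {\rm(M3')}) or ${\rm apx}_\bullet$ (for $p\ge2$) one additionally invokes \eqref{eq1:rho:eff} with $v_\blacktriangle=I_\bullet\widehat u_\bullet$ to replace ${\rm res}_\bullet(T,\widehat u_\bullet)$ by ${\rm osc}_\bullet(T)+\mu_\bullet(T,\widehat u_\bullet)$ resp.\ ${\rm apx}_\bullet(T)+\mu_\bullet(T,\widehat u_\bullet)$, and the $\mu_\bullet$-based estimators are covered by the same bounds since everything was phrased through $\mu_\bullet\simeq\lambda_\bullet$.

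I expect the main obstacle to be avoiding every \emph{localized} appearance of $\widehat u_\bullet-u_\bullet$: splitting $\widehat u_\bullet$ against the coarse Galerkin solution $u_\bullet$ would leave terms $\norm{\boldsymbol{A}^{1/2}\nabla(\widehat u_\bullet-u_\bullet)}{T\cup T'}$ that cannot be bounded by $\eta_\bullet$ restricted to $\mathcal{T}_\bullet\backslash\mathcal{T}_\circ$, since $\widehat u_\bullet-u_\bullet$ is a genuinely global quantity — this is precisely why $\eta_\bullet$ is only \emph{globally} equivalent to the residual estimator. The cure, as above, is to split against $I_\bullet\widehat u_\bullet$, turning the correction into the elementwise, localizable quantity $\mu_\bullet(T,\widehat u_\bullet)\simeq\lambda_\bullet(T,\widehat u_\bullet)$, and to invoke the local discrete efficiency \eqref{eq:rho:eff} with the discrete comparison function $u_\vartriangle=\widehat u_\bullet$ rather than the exact solution. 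A secondary technical point is the verification of $\widehat{\mathcal{T}}_\circ\in{\rm nvb}(\widehat{\mathcal{T}}_\bullet)$ and of the combinatorics in the localization step, both of which rest on {\rm(M1)--(M2)} and the binary refinement rule.
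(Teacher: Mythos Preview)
Your proposal is correct and follows essentially the same approach as the paper: apply \eqref{eq:rho:drel} to the pair $(\widehat{\mathcal{T}}_\bullet,\widehat{\mathcal{T}}_\circ)$, split off $v_\bullet:=I_\bullet\widehat u_\bullet$ to make the correction $\widehat u_\bullet-v_\bullet$ localizable as $\mu_\bullet(T,\widehat u_\bullet)$, use the discrete efficiency \eqref{eq2:rho:eff} (enabled by {\rm(M3)}) to absorb the facet contributions into volume terms, and then handle the three estimator variants via stability for ${\rm res}_\bullet$ resp.\ \eqref{eq1:rho:eff} for ${\rm osc}_\bullet$ and ${\rm apx}_\bullet$. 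The only cosmetic difference is that the paper switches globally from $\widehat u_\bullet$ to $v_\bullet$ via \eqref{eq:rho:stab} at the outset rather than splitting case-by-case, and applies \eqref{eq2:rho:eff} at the fine level $(\widehat{\mathcal{T}}_\bullet,\widehat{\mathcal{T}}_\circ)$ instead of at the coarse level $(\mathcal{T}_\bullet,\widehat{\mathcal{T}}_\bullet)$ as you do.
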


\begin{proof}
Due to the local equivalence \eqref{eq1:hh2:equivalent}, it suffices to consider the $\mu_\bullet$-based estimators from~\eqref{table}.
The proof is split into three steps.

{\bf Step~1.}
Let $v_\bullet := I_\bullet(\widehat u_\bullet) \in {\mathcal{S}}^p_0(\mathcal{T}_\bullet)$. 
We apply the discrete reliability~\eqref{eq:rho:drel} of the residual error estimator for $\mathcal{T}_\vartriangle=\widehat{\mathcal{T}}_\circ$ and $\mathcal{T}_\blacktriangle = \widehat{\mathcal{T}}_\bullet$. Together with (local) stability~\eqref{eq:rho:stab} of the residual error estimator, this proves that 
\begin{align*}
 \norm{\boldsymbol{A}^{1/2}\nabla(\widehat u_\circ - \widehat u_\bullet)}{\Omega}
 &\reff{eq:rho:drel}\lesssim 
 \widehat\varrho_\bullet\big((\widehat{\mathcal{T}}_\bullet \backslash \widehat{\mathcal{T}}_\circ) \cup (\widehat{\mathcal{E}}_\bullet^\Omega \backslash \widehat{\mathcal{E}}_\circ^\Omega), \widehat u_\bullet \big)
 \\&
 \reff{eq:rho:stab}\lesssim \widehat\varrho_\bullet \big((\widehat{\mathcal{T}}_\bullet \backslash \widehat{\mathcal{T}}_\circ) \cup (\widehat{\mathcal{E}}_\bullet^\Omega \backslash \widehat{\mathcal{E}}_\circ^\Omega), v_\bullet \big)
 + \norm{\boldsymbol{A}^{1/2}\nabla(\widehat u_\bullet - v_\bullet)}{\bigcup(\widehat{\mathcal{T}}_\bullet \backslash \widehat{\mathcal{T}}_\circ)},
\end{align*}
 since the patch of a refined facet $\widehat E\in\widehat {\mathcal{E}}_\bullet^\Omega\setminus\widehat{\mathcal{E}}_\circ^\Omega$ belongs to $\widehat{\mathcal{T}}_\bullet\setminus\widehat{\mathcal{T}}_\circ$.
Next, we show that $ \bigcup(\widehat{\mathcal{T}}_\bullet \backslash \widehat{\mathcal{T}}_\circ) \subseteq \bigcup(\mathcal{T}_\bullet \backslash \mathcal{T}_\circ)$.
Let $\widehat T\in\widehat{\mathcal{T}}_\bullet \backslash \widehat{\mathcal{T}}_\circ$ and  $T\in\mathcal{T}_\bullet$  be the unique father element, i.e., $\widehat T\subset T$.
If $T\in\mathcal{T}_\circ$, then (M2) implies that $\widehat T\in\widehat{\mathcal{T}}_\circ$, which contradicts the assumption $\widehat T\not\in\widehat{\mathcal{T}}_\circ$.
This concludes the desired inclusion.
Since the local weights of the residual error estimator  are decreasing for (uniform) mesh-refinement, this yields that
\begin{align*}
 \widehat\varrho_\bullet(\widehat{\mathcal{T}}_\bullet \backslash \widehat{\mathcal{T}}_\circ,v_\bullet)
 \le \varrho_\bullet(\mathcal{T}_\bullet \backslash \mathcal{T}_\circ,v_\bullet).
\end{align*}%
 According to  the discrete efficiency~\eqref{eq2:rho:eff} of the residual error estimator for $\mathcal{T}_\vartriangle=\widehat{\mathcal{T}}_\circ$ and $\mathcal{T}_\blacktriangle =\widehat{\mathcal{T}}_\bullet$, it holds that
\begin{align*}
 \widehat\varrho_\bullet(\widehat{\mathcal{E}}_\bullet^\Omega \backslash \widehat{\mathcal{E}}_\circ^\Omega,v_\bullet)
\lesssim
 \norm{\boldsymbol{A}^{1/2}\nabla(\widehat u_\bullet - v_\bullet)}{\bigcup(\widehat{\mathcal{T}}_\bullet \backslash  \widehat{\mathcal{T}}_\circ)} + \widehat \varrho_\bullet(\widehat{\mathcal{T}}_\bullet \backslash\widehat{\mathcal{T}}_\circ,v_\bullet).
\end{align*}%
Combining the last three estimates and using that $\bigcup(\widehat{\mathcal{T}}_\bullet \backslash \widehat{\mathcal{T}}_\circ) \subseteq \bigcup(\mathcal{T}_\bullet \backslash \mathcal{T}_\circ)$, we are led to
\begin{align}\label{eq:rel:proof1}
 \norm{\boldsymbol{A}^{1/2}\nabla(\widehat u_\circ - \widehat u_\bullet)}{\Omega}
 \lesssim  
 \varrho_\bullet (\mathcal{T}_\bullet \backslash \mathcal{T}_\circ, v_\bullet)
 + \norm{\boldsymbol{A}^{1/2}\nabla(\widehat u_\bullet - v_\bullet)}{\bigcup(\mathcal{T}_\bullet \backslash \mathcal{T}_\circ)}.
\end{align}

{\bf Step~2.}
For arbitrary $p\ge1$, 
 we may use stability~\eqref{eq:rho:stab} of the residual error estimator to see that
\begin{align*}
 \varrho_\bullet (\mathcal{T}_\bullet \backslash \mathcal{T}_\circ, v_\bullet)
& \simeq \widehat\varrho_\bullet\big(\set{\widehat T'\in\widehat{\mathcal{T}}_\bullet}{\widehat T'\subseteq\bigcup(\mathcal{T}_\bullet\setminus\mathcal{T}_\circ)},v_\bullet\big) %
\\ &\reff{eq:rho:stab}\lesssim \widehat\varrho_\bullet\big(\set{\widehat T'\in\widehat{\mathcal{T}}_\bullet}{\widehat T'\subseteq\bigcup(\mathcal{T}_\bullet\setminus\mathcal{T}_\circ)},\widehat u_\bullet\big)  
 + \norm{\boldsymbol{A}^{1/2}\nabla(\widehat u_\bullet - v_\bullet)}{\bigcup(\mathcal{T}_\bullet \backslash \mathcal{T}_\circ)}
 \\
 &\simeq {\rm res}_\bullet(\mathcal{T}_\bullet \backslash \mathcal{T}_\circ, \widehat u_\bullet) 
 + \norm{\boldsymbol{A}^{1/2}\nabla(\widehat u_\bullet - v_\bullet)}{\bigcup(\mathcal{T}_\bullet \backslash \mathcal{T}_\circ)}. 
\end{align*}
Combining this with~\eqref{eq:rel:proof1} and the definition of $\mu_\bullet( \mathcal{T}_\bullet \backslash \mathcal{T}_\circ, \widehat u_\bullet)$, we prove \eqref{eq:eta:drel} for 
$\eta_\bullet^2=\mu_\bullet^2 + {\rm res}_\bullet^2$.

{\bf Step~3.}
If the refinement ensures (M3'), we use the discrete efficiency~\eqref{eq1:rho:eff} of the residual error estimator for $\mathcal{T}_\vartriangle=\widehat{\mathcal{T}}_\bullet$ and $\mathcal{T}_\blacktriangle = \mathcal{T}_\bullet$ to see that
\begin{align*}
 \varrho_\bullet (\mathcal{T}_\bullet \backslash \mathcal{T}_\circ, v_\bullet)
 \lesssim \norm{\boldsymbol{A}^{1/2}\nabla(\widehat u_\bullet - v_\bullet)}{\bigcup(\mathcal{T}_\bullet \backslash \mathcal{T}_\circ)} + {\rm osc}_\bullet(\mathcal{T}_\bullet \backslash \mathcal{T}_\circ).
\end{align*}
Combining this with~\eqref{eq:rel:proof1}, we prove \eqref{eq:eta:drel} for $\eta_\bullet^2=\mu_\bullet^2+{\rm osc}_\bullet^2$.

{\bf Step~4.}
Finally, if $p\ge2$, we can argue along the  lines of Step~3 that \eqref{eq:eta:drel} holds for $\eta_\bullet^2=\mu_\bullet^2+{\rm apx}_\bullet^2$.
\end{proof}

\begin{lemma}[general quasi-orthogonality for $\boldsymbol{\eta_\bullet(\cdot)}$]\label{lem:general quasi orthogonality}
Consider Algorithm~\ref{algorithm} with $\eta_\bullet$ from~\eqref{table}.
Then, it holds that 
\begin{align}\label{eq:general quasi orthogonality}
 \sum_{j=\ell}^\infty \norm{\boldsymbol{A}^{1/2}\nabla(\widehat u_{j+1}-\widehat u_j)}{\Omega}^2
 \le C_{\rm rel}^2 \, \eta_\ell(\widehat u_\ell)^2 
 \quad\text{for all } \ell \in \mathbb{N}_0,
\end{align}
where $C_{\rm rel}>0$ is the reliability constant from \eqref{eq:reliable}.
\end{lemma}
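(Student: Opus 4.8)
The plan is to exploit that, for symmetric $\boldsymbol{A}$, the fine-mesh Galerkin solutions $\widehat u_j$, $j\ge\ell$, form a sequence of \emph{nested} Galerkin approximations of $u$, so that the claimed estimate reduces to the Pythagoras theorem~\eqref{eq:pythagoras}, a telescoping argument, and the already established reliability~\eqref{eq:reliable}. The only structural input that needs attention is the nestedness ${\mathcal{S}}^p_0(\widehat{\mathcal{T}}_j)\subseteq{\mathcal{S}}^p_0(\widehat{\mathcal{T}}_{j+1})$, equivalently $\widehat{\mathcal{T}}_{j+1}\in{\rm nvb}(\widehat{\mathcal{T}}_j)$ (this inclusion is in fact already used implicitly in the proof of Lemma~\ref{lemma:eta:drel} when applying~\eqref{eq:rho:drel} to $\mathcal{T}_\vartriangle=\widehat{\mathcal{T}}_\circ$ and $\mathcal{T}_\blacktriangle=\widehat{\mathcal{T}}_\bullet$): one starts from $\mathcal{T}_{j+1}={\rm refine}(\mathcal{T}_j,\mathcal{M}_j)\in{\rm nvb}(\mathcal{T}_j)$ and uses (M2) together with the compatibility of the uniform refinement ${\rm refine}(\cdot,\cdot)$ with NVB bisection; concretely, for $T\in\mathcal{M}_j$ the sons of $T$ in $\mathcal{T}_{j+1}$ already coincide with those in $\widehat{\mathcal{T}}_j$, while for an arbitrary $T\in\mathcal{T}_j$ one checks, by induction on the number of NVB bisections of $T$ used to form $\mathcal{T}_{j+1}$, that uniformly refining each son of $T$ yields a refinement of the uniform refinement of $T$ itself (in $2$D this rests on the elementary fact that every proper NVB refinement of a triangle starts with a bisection of its reference edge; in $3$D one argues with the explicit $(M3)$-refinement of Figure~\ref{fig:bisec17}). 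I expect this combinatorial bookkeeping to be the main obstacle; the remainder is soft.

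Granting nestedness, $\widehat u_j\in{\mathcal{S}}^p_0(\widehat{\mathcal{T}}_j)\subseteq{\mathcal{S}}^p_0(\widehat{\mathcal{T}}_{j+1})$ is itself the Galerkin approximation of $\widehat u_{j+1}$, so~\eqref{eq:pythagoras} with $\mathcal{T}_\bullet=\widehat{\mathcal{T}}_{j+1}$, $u_\bullet=\widehat u_{j+1}$, and $v_\bullet=\widehat u_j$ gives
\begin{align*}
 \norm{\boldsymbol{A}^{1/2}\nabla(u-\widehat u_j)}{\Omega}^2
 = \norm{\boldsymbol{A}^{1/2}\nabla(u-\widehat u_{j+1})}{\Omega}^2
 + \norm{\boldsymbol{A}^{1/2}\nabla(\widehat u_{j+1}-\widehat u_j)}{\Omega}^2 .
\end{align*}
Summing this identity over $j=\ell,\dots,L$ and discarding the nonnegative term $\norm{\boldsymbol{A}^{1/2}\nabla(u-\widehat u_{L+1})}{\Omega}^2$ yields $\sum_{j=\ell}^{L}\norm{\boldsymbol{A}^{1/2}\nabla(\widehat u_{j+1}-\widehat u_j)}{\Omega}^2\le\norm{\boldsymbol{A}^{1/2}\nabla(u-\widehat u_\ell)}{\Omega}^2$ for every $L\ge\ell$, and passing to the limit $L\to\infty$ bounds the full series by $\norm{\boldsymbol{A}^{1/2}\nabla(u-\widehat u_\ell)}{\Omega}^2$.

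Finally, since ${\mathcal{S}}^p_0(\mathcal{T}_\ell)\subseteq{\mathcal{S}}^p_0(\widehat{\mathcal{T}}_\ell)$, the best-approximation property of the Galerkin solution $\widehat u_\ell$ and reliability~\eqref{eq:reliable} give $\norm{\boldsymbol{A}^{1/2}\nabla(u-\widehat u_\ell)}{\Omega}\le\min_{v_\ell\in{\mathcal{S}}^p_0(\mathcal{T}_\ell)}\norm{\boldsymbol{A}^{1/2}\nabla(u-v_\ell)}{\Omega}\le\big(\min_{v_\ell\in{\mathcal{S}}^p_0(\mathcal{T}_\ell)}\norm{\boldsymbol{A}^{1/2}\nabla(u-v_\ell)}{\Omega}^2+{\rm osc}_\ell^2\big)^{1/2}\le C_{\rm rel}\,\eta_\ell(\widehat u_\ell)$. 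Combining this with the telescoping bound of the previous paragraph proves~\eqref{eq:general quasi orthogonality}, with exactly the stated constant $C_{\rm rel}^2$.
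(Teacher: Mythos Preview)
Your proof is correct and follows essentially the same route as the paper: Pythagoras for the nested fine-mesh Galerkin solutions, telescoping, and then reliability~\eqref{eq:reliable} combined with the inclusion ${\mathcal{S}}^p_0(\mathcal{T}_\ell)\subseteq{\mathcal{S}}^p_0(\widehat{\mathcal{T}}_\ell)$ to pass from $\norm{\boldsymbol{A}^{1/2}\nabla(u-\widehat u_\ell)}{\Omega}$ to $C_{\rm rel}\,\eta_\ell(\widehat u_\ell)$. The only cosmetic difference is that the paper invokes the Pythagoras identity~\eqref{eq:pythagoras} directly to write $\norm{\boldsymbol{A}^{1/2}\nabla(u-\widehat u_\ell)}{\Omega}^2\le\norm{\boldsymbol{A}^{1/2}\nabla(u-u_\ell)}{\Omega}^2$ in the last step, whereas you phrase it via the best-approximation property; and the paper simply asserts the nested Pythagoras identity for $\mathcal{T}_\circ\in{\rm refine}(\mathcal{T}_\bullet)$ without the combinatorial discussion of $\widehat{\mathcal{T}}_{j+1}\in{\rm nvb}(\widehat{\mathcal{T}}_j)$ that you sketch.
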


\begin{proof}
For $\mathcal{T}_\circ \in {\rm refine}(\mathcal{T}_\bullet)$, there holds the Pythagoras identity
\begin{align*}
 \norm{\boldsymbol{A}^{1/2}\nabla(u-\widehat u_\circ)}{\Omega}^2
 + \norm{\boldsymbol{A}^{1/2}\nabla(\widehat u_\circ - \widehat u_\bullet)}{\Omega}^2
 = \norm{\boldsymbol{A}^{1/2}\nabla(u-\widehat u_\bullet)}{\Omega}^2.
\end{align*}
Applying this for $\mathcal{T}_\circ = \mathcal{T}_{j+1}$ and $\mathcal{T}_\bullet = \mathcal{T}_j$, we are led to
\begin{align*}
 \sum_{j=\ell}^N \norm{\boldsymbol{A}^{1/2}\nabla(\widehat u_{j+1} - \widehat u_j)}{\Omega}^2
 &= \sum_{j=\ell}^N \big( \norm{\boldsymbol{A}^{1/2}\nabla(u-\widehat u_{j})}{\Omega}^2
 - \norm{\boldsymbol{A}^{1/2}\nabla(u-\widehat u_{j+1})}{\Omega}^2 \big)
 \\&
 = \norm{\boldsymbol{A}^{1/2}\nabla(u-\widehat u_{\ell})}{\Omega}^2
 - \norm{\boldsymbol{A}^{1/2}\nabla(u-\widehat u_{N+1})}{\Omega}^2
 \\&
 \le \norm{\boldsymbol{A}^{1/2}\nabla(u-\widehat u_{\ell})}{\Omega}^2.
\end{align*}
According to  the Pythagoras theorem \eqref{eq:pythagoras} and reliability~\eqref{eq:reliable},  last term satisfies that
\begin{align}\label{eq:reliable2}
 \norm{\boldsymbol{A}^{1/2}\nabla(u-\widehat u_{\ell})}{\Omega}^2\le  \norm{\boldsymbol{A}^{1/2}\nabla(u-u_{\ell})}{\Omega}^2\le C_{\rm rel}^2 \,\eta_\ell(\widehat u_\ell)^2.
\end{align}
As $N\to\infty$, we conclude the proof.
\end{proof}

\begin{proof}[Proof of~\eqref{eq:linear}--\eqref{eq:optimal}]
We prove the assertion in three steps.

\textbf{Step~1:}
First, we  consider only the $\lambda_\bullet$-based estimators from~\eqref{table}.
With ${\mathcal{S}}_{\ell+1,\ell}:=\set{T'\in\mathcal{T}_{\ell+1}}{T'\not\subset\bigcup\mathcal{M}_\ell}$ being the sons of the non-marked elements, it holds that  
\begin{align*}
\eta_{\ell+1}(\widehat u_{\ell+1})^2=\eta_{\ell+1}({\mathcal{S}}_{\ell+1,\ell},\widehat u_{\ell+1})^2+\eta_{\ell+1}(\mathcal{T}_{\ell+1}\setminus{\mathcal{S}}_{\ell+1,\ell},\widehat u_{\ell+1})^2.
\end{align*}
Stability~\eqref{eq:eta stable} with $\mathcal{T}_\bullet=\mathcal{T}_\circ=\mathcal{T}_{\ell+1}$, reduction~\eqref{eq3:mu:reduction} with $\mathcal{T}_\bullet=\mathcal{T}_\ell$ and $\mathcal{T}_\circ=\mathcal{T}_{\ell+1}$, and the Young inequality show for arbitrary $\delta>0$ that
\begin{align*}
\eta_{\ell+1}(\widehat u_{\ell+1})^2
&\le(1+\delta)\big(\eta_{\ell+1}({\mathcal{S}}_{\ell+1,\ell},\widehat u_{\ell})^2
+q_{\rm red}^2\,\eta_{\ell}(\mathcal{M}_\ell,\widehat u_{\ell})^2\big)\\
&\quad+(1+\delta^{-1})(C_{\rm stb}^2 +C_{\rm red}^2) \norm{\boldsymbol{A}^{1/2}\nabla(\widehat u_{\ell+1}-\widehat u_\ell)}{\Omega}^2.
\end{align*}
Due to the facts that $h_{\ell+1}\le h_\ell$ and $\norm{(1-\pi_{\ell+1})(\cdot)}{T}\le \norm{(1-\pi_\ell)(\cdot)}{T}$ as well as $\norm{(1-\Pi_{\ell+1})(\cdot)}{T}\le \norm{(1-\Pi_\ell)(\cdot)}{T}$ for all $T\in\mathcal{T}_{\ell+1}$, we have that 
\begin{align}\label{eq:quasi-mon aux}
\eta_{\ell+1}({\mathcal{S}}_{\ell+1,\ell},\widehat u_{\ell})^2\le \eta_{\ell}(\mathcal{T}_{\ell}\setminus\mathcal{M}_\ell,\widehat u_{\ell})^2=\eta_\ell(\widehat u_\ell)^2- \eta_{\ell}(\mathcal{M}_\ell,\widehat u_{\ell})^2.
\end{align}
Together with the D\"orfler marking in Algorithm~\ref{algorithm} {\rm (iii)}, we derive  the estimator reduction 
\begin{align}\label{eq:estimator reduction}
\begin{split}
\eta_{\ell+1}(\widehat u_{\ell+1})^2&\le 
(1+\delta)\big(1-(1-q_{\rm red}^2)\theta\big)\eta_\ell(\widehat u_\ell)^2\\
&\quad +(1+\delta^{-1}) (C_{\rm stb}^2 +C_{\rm red}^2)\,\norm{\boldsymbol{A}^{1/2}\nabla(\widehat u_{\ell+1}-\widehat u_{\ell})}{\Omega}^2.
\end{split}
\end{align}
According to \cite[Prop.~4.10]{axioms},  general quasi-orthogonality~\eqref{eq:general quasi orthogonality}, reliability~\eqref{eq:reliable2},
 and estimator reduction \eqref{eq:estimator reduction} yield linear convergence \eqref{eq:linear} for the $\lambda_\bullet$-based estimators from~\eqref{table}.

\textbf{Step~2:}
Again, we only consider the $\lambda_\bullet$-based estimators from~\eqref{table}.
The first inequality in \eqref{eq:optimal} follows immediately from \eqref{eq:lower optimal estimate} and \eqref{eq:refine or nvb}. 
We prove the second inequality.
Similarly as in \eqref{eq:quasi-mon aux}, one shows that  $\eta_\circ(\mathcal{T}_\circ,\widehat u_\circ)\lesssim\eta_\bullet(\mathcal{T}_\bullet,\widehat u_\bullet)+\norm{\boldsymbol{A}^{1/2}\nabla(\widehat u_\circ - \widehat u_\bullet)}{\Omega}$  for arbitrary $\mathcal{T}_\bullet\in{\rm nvb}(\mathcal{T}_0)$ and $\mathcal{T}_\circ\in{\rm nvb}(\mathcal{T}_\bullet)$.
Then, discrete reliability \eqref{eq:eta:drel} immediately implies quasi-monotonicity
\begin{align}\label{eq:eta monotone}
\eta_\circ(\widehat u_\circ)\lesssim \eta_\bullet(\widehat u_\bullet).
\end{align}
Altogether, stability \eqref{eq:eta stable}, discrete reliability \eqref{eq:eta:drel},  quasi-monotonicity \eqref{eq:eta monotone},  and the overlay estimate \cite[Eq.~(2.2)]{ckns} for ${\rm nvb}(\cdot)$ allow to apply optimality of D\"orfler marking  \cite[Prop.~4.12]{axioms} and the comparison lemma \cite[Lem.~4.14]{axioms}, which show the following:
There exists a constant $0<\theta_{\rm opt}^{\lambda}<1$ such that for $0<\theta<\theta_{\rm opt}^\lambda$, $s>0$, and all meshes $\mathcal{T}_\bullet$, there exists a refinement $\widetilde{\mathcal{T}}_\bullet\in{\rm nvb}(\mathcal{T}_\bullet)$ such that 
\begin{subequations}
\begin{align}
\theta \, \eta_\bullet(\widehat u_\bullet)^2 &\le \eta_\bullet(\mathcal{T}_\bullet\setminus\widetilde{\mathcal{T}}_\bullet, \widehat u_\bullet)^2,\label{eq:comp1}\\
\#\widetilde{\mathcal{T}}_\bullet-\#\mathcal{T}_\bullet&\lesssim \norm{u}{\mathbb{A}_s^{\eta}}^{1/s}\,\eta_\bullet(\widehat u_\bullet)^{-1/s}.\label{eq:comp2}
\end{align}
\end{subequations}
For $0<\theta<\theta_{\rm opt}^\lambda$, and arbitrary  $\ell\in\mathbb{N}$ with $\mathcal{T}_\ell\neq\mathcal{T}_0$, the  closure estimate \cite[Section~2.6]{ckns} for ${\rm refine}(\cdot)$,  and  minimality of the set of marked elements $\mathcal{M}_\ell$ yield that
\begin{align*}
\#\mathcal{T}_\ell-\#\mathcal{T}_0+1&\,\lesssim\,\#\mathcal{T}_\ell-\#\mathcal{T}_0\lesssim\sum_{j=0}^{\ell-1}\#\mathcal{M}_j\\
&\reff{eq:comp1}\le\sum_{j=0}^{\ell-1}\#(\mathcal{T}_j\setminus\widetilde{\mathcal{T}}_j) 
\le \sum_{j=0}^{\ell-1}(\#\widetilde{\mathcal{T}}_j-\#\mathcal{T}_j) \reff{eq:comp2}\lesssim \norm{u}{\mathbb{A}_s^{\eta}}^{1/s}\sum_{j=0}^{\ell-1}\eta_j(\widehat u_j)^{-1/s}.
\end{align*}
With the linear convergence \eqref{eq:linear}, one can elementarily show that 
\begin{align*}
\sum_{j=0}^{\ell-1}\eta_j(\widehat u_j)^{-1/s}\lesssim \eta_\ell(\widehat u_\ell)^{-1/s};
\end{align*} see, e.g., \cite[Lemma~4.9]{axioms}.
Since $\eta_0(\widehat u_0)\le \norm{u}{\mathbb{A}_s^{\eta}}$, this concludes \eqref{eq:optimal} for the $\lambda_\bullet$-based estimators from \eqref{table}.

\textbf{Step~3:}
Finally, we consider the $\mu_\bullet$-based estimators from~\eqref{table}.
Recall the local equivalence of $\lambda_\bullet$ and $\mu_\bullet$ which immediately transfers to the corresponding estimators from~\eqref{table}. 
Hence, $\mu_\bullet$-based D\"orfler marking $\theta\eta_\bullet(\widehat u_\bullet)\le \eta_\bullet(\mathcal{M}_\bullet,\widehat u_\bullet)$ with parameter $\theta$ and marked elements $\mathcal{M}_\bullet$  implies $\lambda_\bullet$-based D\"orfler marking with parameter $C_{\rm hh2}^{-1}\,\theta$ and the same marked elements $\mathcal{M}_\bullet$ for the corresponding $\lambda_\bullet$-based estimator and vice versa. 

Therefore, $\mu_\bullet$-based D\"orfler marking implies linear convergence of the corresponding $\lambda_\bullet$-based estimator and by equivalence also linear convergence of the $\mu_\bullet$-based estimator.
Moreover, for sufficiently small $\theta$, the $\lambda_\bullet$-based estimator converges with optimal algebraic rates and hence does the $\mu_\bullet$-based estimator. Details are left to the reader.
\end{proof}

\section{Numerical experiments}
\label{section:numerics}

\newcommand{\logLogSlopeTriangle}[6]
{

    \pgfplotsextra
    {
        \pgfkeysgetvalue{/pgfplots/xmin}{\xmin}
        \pgfkeysgetvalue{/pgfplots/xmax}{\xmax}
        \pgfkeysgetvalue{/pgfplots/ymin}{\ymin}
        \pgfkeysgetvalue{/pgfplots/ymax}{\ymax}

        \pgfmathsetmacro{\xArel}{#1}
        \pgfmathsetmacro{\yArel}{#3}
        \pgfmathsetmacro{\xBrel}{#1-#2}
        \pgfmathsetmacro{\yBrel}{\yArel}
        \pgfmathsetmacro{\xCrel}{\xArel}

        \pgfmathsetmacro{\lnxB}{\xmin*(1-(#1-#2))+\xmax*(#1-#2)} 
        \pgfmathsetmacro{\lnxA}{\xmin*(1-#1)+\xmax*#1} 
        \pgfmathsetmacro{\lnyA}{\ymin*(1-#3)+\ymax*#3} 
        \pgfmathsetmacro{\lnyC}{\lnyA+#4*(\lnxA-\lnxB)}
        \pgfmathsetmacro{\yCrel}{\lnyC-\ymin)/(\ymax-\ymin)} 

        \coordinate (A) at (rel axis cs:\xArel,\yArel);
        \coordinate (B) at (rel axis cs:\xBrel,\yBrel);
        \coordinate (C) at (rel axis cs:\xCrel,\yCrel);

        \draw[#5]   (A)-- node[pos=0.5,anchor=south] {#6 1}
                    (B)-- 
                    (C)-- node[pos=0.5,anchor=west] {#6 #4}
                    cycle;
    }
}
\newcommand{\logLogSlopeTrianglelow}[6]
{

    \pgfplotsextra
    {
        \pgfkeysgetvalue{/pgfplots/xmin}{\xmin}
        \pgfkeysgetvalue{/pgfplots/xmax}{\xmax}
        \pgfkeysgetvalue{/pgfplots/ymin}{\ymin}
        \pgfkeysgetvalue{/pgfplots/ymax}{\ymax}

        \pgfmathsetmacro{\xArel}{#1}
        \pgfmathsetmacro{\yArel}{#3}
        \pgfmathsetmacro{\xBrel}{#1-#2}
        \pgfmathsetmacro{\yBrel}{\yArel}
        \pgfmathsetmacro{\xCrel}{\xBrel}

        \pgfmathsetmacro{\lnxB}{\xmin*(1-(#1-#2))+\xmax*(#1-#2)} 
        \pgfmathsetmacro{\lnxA}{\xmin*(1-#1)+\xmax*#1} 
        \pgfmathsetmacro{\lnyA}{\ymin*(1-#3)+\ymax*#3} 
        \pgfmathsetmacro{\lnyC}{\lnyA-#4*(\lnxA-\lnxB)}
        \pgfmathsetmacro{\yCrel}{\lnyC-\ymin)/(\ymax-\ymin)} 

        \coordinate (A) at (rel axis cs:\xArel,\yArel);
        \coordinate (B) at (rel axis cs:\xBrel,\yBrel);
        \coordinate (C) at (rel axis cs:\xCrel,\yCrel);

        \draw[#5]   (A)-- node[pos=0.5,anchor=north] {#6 $1$}
                    (B)-- node[pos=0.5,anchor=east] {#6 $#4$}
                    (C)-- 
                    cycle;
    }
}


In this section, we present three examples in two dimensions to empirically verify our theoretical results.
For all examples, we choose the L-shaped domain 
\begin{align*}
\Omega=\linebreak(-1,1)^2\backslash \big([0,1]\times[-1,0]\big).
\end{align*} 
The uniform initial mesh $\mathcal{T}_0$ consists of $12$ triangles. 
We run Algorithm~\ref{algorithm} either with $\theta=1$ for uniform refinement or with $\theta=0.5$
for adaptive refinement based on the indicators from~\eqref{table}
\begin{align}
 \label{eq:refinementindicator}
\eta_\bullet(T,\widehat u_\bullet)^2 :=\lambda_\bullet(T,\widehat u_\bullet)^2+
\begin{cases}
{\rm res}_\bullet(T,\widehat u_\bullet)^2\quad & \text{if } p=1\text{ and (M3)}, \\
{\rm osc}_\bullet(T)^2\quad & \text{if } p\in\{1,2\}\text{ and (M3')}, \\
{\rm apx}_\bullet(T)^2\quad & \text{if } p=2\text{ and (M3)}. 
\end{cases}
\end{align}
We consider the model problem~\eqref{eq:strong} with $\boldsymbol{A}=\mathbf{I}$, where we 
now allow inhomogeneous Dirichlet conditions. 
In our examples, we replace the Dirichlet data 
by its nodal interpolant for the numerical calculations.
In all figures, we plot the error $\big(\norm{\nabla(u - u_\bullet)}{\Omega}^2+{{\rm osc}_\bullet^2}\big)^{1/2}$ (if available)
as well as the overall error estimators 
\begin{alignat*}{2}
\lambda_\bullet' 
&:= \bigg(\sum_{T\in\mathcal{T}_\bullet}\big[\lambda_\bullet(T,\widehat u_\bullet)^2+{\rm res}_\bullet(T,\widehat u_\bullet)^2\big] \! \bigg)^{1/2}, 
&\quad 
\mu_\bullet' 
&:= \bigg(\sum_{T\in\mathcal{T}_\bullet}\big[\mu_\bullet(T,\widehat u_\bullet)^2+{\rm res}_\bullet(T,\widehat u_\bullet)^2\big] \! \bigg)^{1/2}, 
\\
\lambda_\bullet''
&:= \bigg(\sum_{T\in\mathcal{T}_\bullet}\big[\lambda_\bullet(T,\widehat u_\bullet)^2+{\rm osc}_\bullet(T)^2\big] \! \bigg)^{1/2}, 
& 
\mu_\bullet''
&:= \bigg(\sum_{T\in\mathcal{T}_\bullet}\big[\mu_\bullet(T,\widehat u_\bullet)^2 +{\rm osc}_\bullet(T)^2\big] \! \bigg)^{1/2}
\end{alignat*}
for uniform (unif.) and adaptive (adap.) refinement with respect to the number of elements $N$ of
$\mathcal{T}_\bullet$. 
For $p=2$, we additionally plot the overall estimators
\begin{alignat*}{2}
\lambda_\bullet''' 
&:= \bigg(\sum_{T\in\mathcal{T}_\bullet}\big[\lambda_\bullet(T,\widehat u_\bullet)^2+{\rm apx}_\bullet(T)^2\big] \! \bigg)^{1/2}, 
&\quad 
\mu_\bullet'''
&:= \bigg(\sum_{T\in\mathcal{T}_\bullet}\big[\mu_\bullet(T,\widehat u_\bullet)^2 +{\rm apx}_\bullet(T)^2\big] \! \bigg)^{1/2}.
\end{alignat*}
We use either
three or five bisections for refinement of a marked element; cf.~Figure~\ref{fig:nvb}.
This guarantees (M3) or (M3'). 
Note that for uniform refinement with (M3), 
the convergence order ${\mathcal{O}}(N^{-s})$ with $s>0$ corresponds to ${\mathcal{O}}(h^{2s})$, where $h:=\max_{T\in \mathcal{T}_\bullet}h_T$. 
This does not hold for uniform refinement with (M3'), since one refinement step leads to element sons of different levels.
In particular, the uniform convergence rates seem to be slightly worse than naively expected.
However, plotted over the maximal mesh-size $h$, one obtains the expected rates (not displayed). 

\begin{figure}[t]
\centering
\subfigure[${\mathcal{S}}^1$-FEM with (M3) and $\eta_\bullet(T,\widehat u_\bullet)^2 :=\lambda_\bullet(T,\widehat u_\bullet)^2+{\rm res}_\bullet(T,\widehat u_\bullet)^2$.]{
\label{subfig:mybsp1errornvb3P1}
\begin{tikzpicture}[scale=0.5]
\begin{loglogaxis}[width=0.92\textwidth,
xlabel={\small \# elements}, ylabel={\small error and error estimator}, font={\scriptsize}, 
ymin=5e-6,ymax=2e1,
legend style={font=\small, draw=none, fill=none, cells={anchor=west}, legend pos=south west}]
\addplot [color=red!50!green, mark=x, line width=0.5pt, mark size=3pt, mark options={solid, red!50!green}]                     table [x=nrelements,y=eta1] {datanumerics/mybsp1-P1FEM-nvb3eta1T2uni.dat};
\addplot [color=gray, mark=asterisk, line width=0.5pt, mark size=3pt, mark options={solid, gray}]                              table [x=nrelements,y=eta2] {datanumerics/mybsp1-P1FEM-nvb3eta1T2uni.dat};
\addplot [color=black!50!red, mark=+, line width=0.5pt, mark size=3pt, mark options={solid, black!50!red}]                     table [x=nrelements,y=eta4] {datanumerics/mybsp1-P1FEM-nvb3eta1T2uni.dat};
\addplot [color=violet, mark=diamond, line width=0.5pt, mark size=3pt, mark options={solid, violet}]                           table [x=nrelements,y=eta5] {datanumerics/mybsp1-P1FEM-nvb3eta1T2uni.dat};
\addplot [color=black, mark=o, line width=0.5pt, mark size=3pt, mark options={solid, black}]                                   table [x=nrelements,y=errorH1semiosc] {datanumerics/mybsp1-P1FEM-nvb3eta1T2uni.dat};
\addplot [color=black!50!green, mark=triangle, line width=0.5pt, mark size=3pt, mark options={solid, black!50!green}]          table [x=nrelements,y=eta1] {datanumerics/mybsp1-P1FEM-nvb3eta1T2ada.dat};
\addplot [color=black!50!blue, mark=triangle, line width=0.5pt, mark size=3pt, mark options={solid, rotate=90, black!50!blue}] table [x=nrelements,y=eta2] {datanumerics/mybsp1-P1FEM-nvb3eta1T2ada.dat};
\addplot [color=black!50!red, mark=triangle, line width=0.5pt, mark size=3pt, mark options={solid, rotate=180, black!50!red}]  table [x=nrelements,y=eta4] {datanumerics/mybsp1-P1FEM-nvb3eta1T2ada.dat};
\addplot [color=blue, mark=triangle, line width=0.5pt, mark size=3pt, mark options={solid, rotate=270, blue}]                  table [x=nrelements,y=eta5] {datanumerics/mybsp1-P1FEM-nvb3eta1T2ada.dat};
\addplot [color=red, mark=square, line width=0.5pt, mark size=3pt, mark options={solid, red}]                                  table [x=nrelements,y=errorH1semiosc] {datanumerics/mybsp1-P1FEM-nvb3eta1T2ada.dat};
%
\logLogSlopeTriangle{0.8}{0.2}{0.72}{-1/2}{black}{\scriptsize};
\logLogSlopeTrianglelow{0.8}{0.2}{0.45}{-1/2}{black}{\scriptsize};

\legend{
$\lambda_\bullet'$ (unif.),
$\lambda_\bullet''$ (unif.),
$\mu_\bullet'$ (unif.),
$\mu_\bullet''$ (unif.),
$\big(\norm{\nabla(u - u_\bullet)}{\Omega}^2+{{\rm osc}_\bullet^2}\big)^{1/2}$ (unif.),
$\lambda_\bullet'$ (adap.),
$\lambda_\bullet''$ (adap.),
$\mu_\bullet'$ (adap.),
$\mu_\bullet''$ (adap.),
$\big(\norm{\nabla(u - u_\bullet)}{\Omega}^2+{{\rm osc}_\bullet^2}\big)^{1/2}$ (adap.),
}
\end{loglogaxis}
\end{tikzpicture}
}
\hspace{0.015\textwidth}
\subfigure[${\mathcal{S}}^2$-FEM with (M3) and $\eta_\bullet(T,\widehat u_\bullet)^2 :=\lambda_\bullet(T,\widehat u_\bullet)^2+{\rm apx}_\bullet(T)^2$.]{\label{subfig:mybsp1errornvb3P2}
\begin{tikzpicture}[scale=0.5]
\begin{loglogaxis}[width=0.92\textwidth,
xlabel={\small \# elements}, ylabel={\small error and error estimator}, font={\scriptsize}, 
ymin=5e-6,ymax=2e1,
legend style={font=\small, draw=none, fill=none, cells={anchor=west}, legend pos=south west}]
\addplot [color=red!50!green, mark=x, line width=0.5pt, mark size=3pt, mark options={solid, red!50!green}]                    table [x=nrelements,y=eta1] {datanumerics/mybsp1-P2FEM-nvb3eta3T2uni.dat};
\addplot [color=gray, mark=asterisk, line width=0.5pt, mark size=3pt, mark options={solid, gray}]                             table [x=nrelements,y=eta2] {datanumerics/mybsp1-P2FEM-nvb3eta3T2uni.dat};
\addplot [color=black!25!red, mark=otimes, line width=0.5pt, mark size=2.5pt, mark options={solid, black!25!red}]             table [x=nrelements,y=eta3] {datanumerics/mybsp1-P2FEM-nvb3eta3T2uni.dat};
\addplot [color=black!50!red, mark=+, line width=0.5pt, mark size=3pt, mark options={solid, black!50!red}]                    table [x=nrelements,y=eta4] {datanumerics/mybsp1-P2FEM-nvb3eta3T2uni.dat};
\addplot [color=violet, mark=diamond, line width=0.5pt, mark size=3pt, mark options={solid, violet}]                          table [x=nrelements,y=eta5] {datanumerics/mybsp1-P2FEM-nvb3eta3T2uni.dat};
\addplot [color=black!25!yellow, mark=Mercedes star, line width=0.5pt, mark size=3pt, mark options={solid, black!25!yellow}]  table [x=nrelements,y=eta6] {datanumerics/mybsp1-P2FEM-nvb3eta3T2uni.dat};
\addplot [color=black, mark=o, line width=0.5pt, mark size=3pt, mark options={solid, black}]                                  table [x=nrelements,y=errorH1semiosc] {datanumerics/mybsp1-P2FEM-nvb3eta3T2uni.dat};
\addplot [color=black!50!green, mark=triangle, line width=0.5pt, mark size=3pt, mark options={solid, black!50!green}]         table [x=nrelements,y=eta1] {datanumerics/mybsp1-P2FEM-nvb3eta3T2ada.dat};
\addplot [color=black!50!blue, mark=triangle, line width=0.5pt, mark size=3pt, mark options={solid, rotate=90, black!50!blue}]table [x=nrelements,y=eta2] {datanumerics/mybsp1-P2FEM-nvb3eta3T2ada.dat};
\addplot [color=black!75!green, mark=pentagon, line width=0.5pt, mark size=2.5pt, mark options={solid, black!75!green}]       table [x=nrelements,y=eta3] {datanumerics/mybsp1-P2FEM-nvb3eta3T2ada.dat};
\addplot [color=black!50!red, mark=triangle, line width=0.5pt, mark size=3pt, mark options={solid, rotate=180, black!50!red}] table [x=nrelements,y=eta4] {datanumerics/mybsp1-P2FEM-nvb3eta3T2ada.dat};
\addplot [color=blue, mark=triangle, line width=0.5pt, mark size=3pt, mark options={solid, rotate=270, blue}]                 table [x=nrelements,y=eta5] {datanumerics/mybsp1-P2FEM-nvb3eta3T2ada.dat};
\addplot [color=brown, mark=oplus, line width=0.5pt, mark size=2.5pt, mark options={solid, brown}]                            table [x=nrelements,y=eta6] {datanumerics/mybsp1-P2FEM-nvb3eta3T2ada.dat};
\addplot [color=red, mark=square, line width=0.5pt, mark size=3pt, mark options={solid, red}]                                 table [x=nrelements,y=errorH1semiosc] {datanumerics/mybsp1-P2FEM-nvb3eta3T2ada.dat};
\logLogSlopeTriangle{0.8}{0.2}{0.5}{-1}{black}{\scriptsize};
\logLogSlopeTrianglelow{0.8}{0.2}{0.08}{-1}{black}{\scriptsize};

\legend{
$\lambda_\bullet'$ (unif.),
$\lambda_\bullet''$ (unif.),
$\lambda_\bullet'''$ (unif.),
$\mu_\bullet'$ (unif.),
$\mu_\bullet''$ (unif.),
$\mu_\bullet'''$ (unif.),
$\big(\norm{\nabla(u - u_\bullet)}{\Omega}^2+{{\rm osc}_\bullet^2}\big)^{1/2}$ (unif.),
$\lambda_\bullet'$ (adap.),
$\lambda_\bullet''$ (adap.),
$\lambda_\bullet'''$ (adap.),
$\mu_\bullet'$ (adap.),
$\mu_\bullet''$ (adap.),
$\mu_\bullet'''$ (adap.),
$\big(\norm{\nabla(u - u_\bullet)}{\Omega}^2+{{\rm osc}_\bullet^2}\big)^{1/2}$ (adap.),
}
\end{loglogaxis}
\end{tikzpicture}
}
\caption{Experiment from Section~\ref{ex1} with known smooth solution. We use the (minimal) refinement with (M3)
for both, uniform and adaptive refinement.  
}
\label{fig:mybsp1errornvb3}
\end{figure}
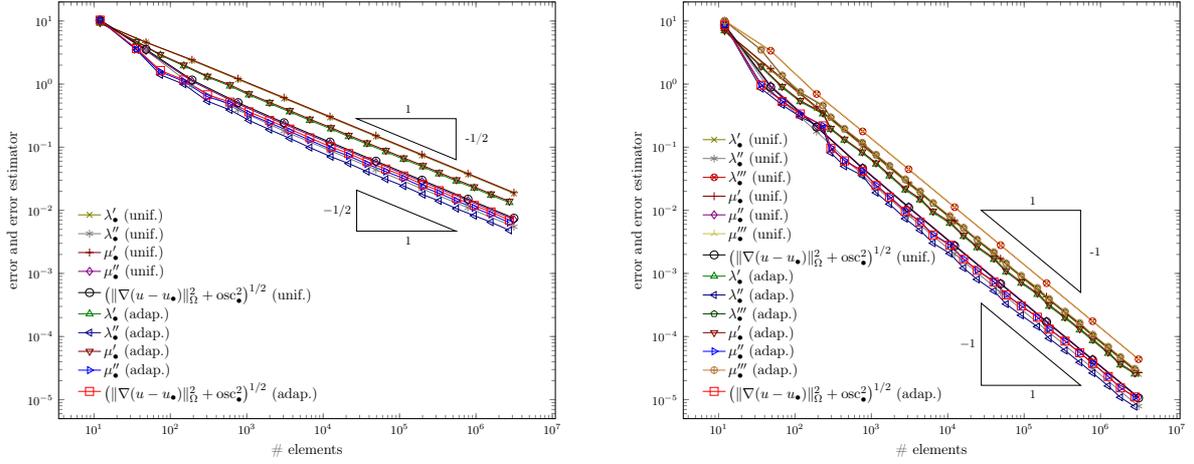
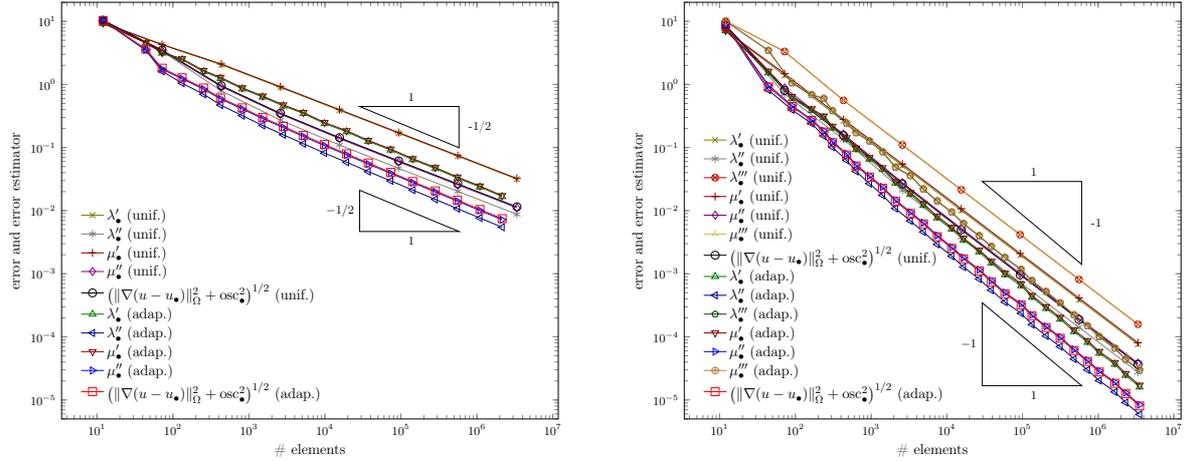

\begin{figure}
\centering
\subfigure[${\mathcal{S}}^1$-FEM with (M3') and $\eta_\bullet(T,\widehat u_\bullet)^2 :=\lambda_\bullet(T,\widehat u_\bullet)^2+{\rm osc}_\bullet(T)^2$.]{\label{subfig:mybsp1errornvb5P1}
\begin{tikzpicture}[scale=0.5]
\begin{loglogaxis}[width=0.92\textwidth,
xlabel={\small \# elements}, ylabel={\small error and error estimator}, font={\scriptsize}, 
ymin=5e-6,ymax=2e1,
legend style={font=\small, draw=none, fill=none, cells={anchor=west}, legend pos=south west}]
\addplot [color=red!50!green, mark=x, line width=0.5pt, mark size=3pt, mark options={solid, red!50!green}]                     table [x=nrelements,y=eta1] {datanumerics/mybsp1-P1FEM-nvb5eta2T2uni.dat};
\addplot [color=gray, mark=asterisk, line width=0.5pt, mark size=3pt, mark options={solid, gray}]                              table [x=nrelements,y=eta2] {datanumerics/mybsp1-P1FEM-nvb5eta2T2uni.dat};
\addplot [color=black!50!red, mark=+, line width=0.5pt, mark size=3pt, mark options={solid, black!50!red}]                     table [x=nrelements,y=eta4] {datanumerics/mybsp1-P1FEM-nvb5eta2T2uni.dat};
\addplot [color=violet, mark=diamond, line width=0.5pt, mark size=3pt, mark options={solid, violet}]                           table [x=nrelements,y=eta5] {datanumerics/mybsp1-P1FEM-nvb5eta2T2uni.dat};
\addplot [color=black, mark=o, line width=0.5pt, mark size=3pt, mark options={solid, black}]                                   table [x=nrelements,y=errorH1semiosc] {datanumerics/mybsp1-P1FEM-nvb5eta2T2uni.dat};
\addplot [color=black!50!green, mark=triangle, line width=0.5pt, mark size=3pt, mark options={solid, black!50!green}]          table [x=nrelements,y=eta1] {datanumerics/mybsp1-P1FEM-nvb5eta2T2ada.dat};
\addplot [color=black!50!blue, mark=triangle, line width=0.5pt, mark size=3pt, mark options={solid, rotate=90, black!50!blue}] table [x=nrelements,y=eta2] {datanumerics/mybsp1-P1FEM-nvb5eta2T2ada.dat};
\addplot [color=black!50!red, mark=triangle, line width=0.5pt, mark size=3pt, mark options={solid, rotate=180, black!50!red}]  table [x=nrelements,y=eta4] {datanumerics/mybsp1-P1FEM-nvb5eta2T2ada.dat};
\addplot [color=blue, mark=triangle, line width=0.5pt, mark size=3pt, mark options={solid, rotate=270, blue}]                  table [x=nrelements,y=eta5] {datanumerics/mybsp1-P1FEM-nvb5eta2T2ada.dat};
\addplot [color=red, mark=square, line width=0.5pt, mark size=3pt, mark options={solid, red}]                                  table [x=nrelements,y=errorH1semiosc] {datanumerics/mybsp1-P1FEM-nvb5eta2T2ada.dat};
%
\logLogSlopeTriangle{0.8}{0.2}{0.75}{-1/2}{black}{\scriptsize};
\logLogSlopeTrianglelow{0.8}{0.2}{0.45}{-1/2}{black}{\scriptsize};

\legend{
$\lambda_\bullet'$ (unif.),
$\lambda_\bullet''$ (unif.),
$\mu_\bullet'$ (unif.),
$\mu_\bullet''$ (unif.),
$\big(\norm{\nabla(u - u_\bullet)}{\Omega}^2+{{\rm osc}_\bullet^2}\big)^{1/2}$ (unif.),
$\lambda_\bullet'$ (adap.),
$\lambda_\bullet''$ (adap.),
$\mu_\bullet'$ (adap.),
$\mu_\bullet''$ (adap.),
$\big(\norm{\nabla(u - u_\bullet)}{\Omega}^2+{{\rm osc}_\bullet^2}\big)^{1/2}$ (adap.),
}
\end{loglogaxis}
\end{tikzpicture}
}
\hspace{0.015\textwidth}
\subfigure[${\mathcal{S}}^2$-FEM with (M3') and $\eta_\bullet(T,\widehat u_\bullet)^2 :=\lambda_\bullet(T,\widehat u_\bullet)^2+{\rm osc}_\bullet(T)^2$.]{\label{subfig:mybsp1errornvb5P2}
\begin{tikzpicture}[scale=0.5]
\begin{loglogaxis}[width=0.92\textwidth,
xlabel={\small \# elements}, ylabel={\small error and error estimator}, font={\scriptsize}, 
ymin=5e-6,ymax=2e1,
legend style={font=\small, draw=none, fill=none, cells={anchor=west}, legend pos=south west}]
\addplot [color=red!50!green, mark=x, line width=0.5pt, mark size=3pt, mark options={solid, red!50!green}]                    table [x=nrelements,y=eta1] {datanumerics/mybsp1-P2FEM-nvb5eta2T2uni.dat};
\addplot [color=gray, mark=asterisk, line width=0.5pt, mark size=3pt, mark options={solid, gray}]                             table [x=nrelements,y=eta2] {datanumerics/mybsp1-P2FEM-nvb5eta2T2uni.dat};
\addplot [color=black!25!red, mark=otimes, line width=0.5pt, mark size=2.5pt, mark options={solid, black!25!red}]             table [x=nrelements,y=eta3] {datanumerics/mybsp1-P2FEM-nvb5eta2T2uni.dat};
\addplot [color=black!50!red, mark=+, line width=0.5pt, mark size=3pt, mark options={solid, black!50!red}]                    table [x=nrelements,y=eta4] {datanumerics/mybsp1-P2FEM-nvb5eta2T2uni.dat};
\addplot [color=violet, mark=diamond, line width=0.5pt, mark size=3pt, mark options={solid, violet}]                          table [x=nrelements,y=eta5] {datanumerics/mybsp1-P2FEM-nvb5eta2T2uni.dat};
\addplot [color=black!25!yellow, mark=Mercedes star, line width=0.5pt, mark size=3pt, mark options={solid, black!25!yellow}]  table [x=nrelements,y=eta6] {datanumerics/mybsp1-P2FEM-nvb5eta2T2uni.dat};
\addplot [color=black, mark=o, line width=0.5pt, mark size=3pt, mark options={solid, black}]                                  table [x=nrelements,y=errorH1semiosc] {datanumerics/mybsp1-P2FEM-nvb5eta2T2uni.dat};
\addplot [color=black!50!green, mark=triangle, line width=0.5pt, mark size=3pt, mark options={solid, black!50!green}]         table [x=nrelements,y=eta1] {datanumerics/mybsp1-P2FEM-nvb5eta2T2ada.dat};
\addplot [color=black!50!blue, mark=triangle, line width=0.5pt, mark size=3pt, mark options={solid, rotate=90, black!50!blue}]table [x=nrelements,y=eta2] {datanumerics/mybsp1-P2FEM-nvb5eta2T2ada.dat};
\addplot [color=black!75!green, mark=pentagon, line width=0.5pt, mark size=2.5pt, mark options={solid, black!75!green}]       table [x=nrelements,y=eta3] {datanumerics/mybsp1-P2FEM-nvb5eta2T2ada.dat};
\addplot [color=black!50!red, mark=triangle, line width=0.5pt, mark size=3pt, mark options={solid, rotate=180, black!50!red}] table [x=nrelements,y=eta4] {datanumerics/mybsp1-P2FEM-nvb5eta2T2ada.dat};
\addplot [color=blue, mark=triangle, line width=0.5pt, mark size=3pt, mark options={solid, rotate=270, blue}]                 table [x=nrelements,y=eta5] {datanumerics/mybsp1-P2FEM-nvb5eta2T2ada.dat};
\addplot [color=brown, mark=oplus, line width=0.5pt, mark size=2.5pt, mark options={solid, brown}]                            table [x=nrelements,y=eta6] {datanumerics/mybsp1-P2FEM-nvb5eta2T2ada.dat};
\addplot [color=red, mark=square, line width=0.5pt, mark size=3pt, mark options={solid, red}]                                 table [x=nrelements,y=errorH1semiosc] {datanumerics/mybsp1-P2FEM-nvb5eta2T2ada.dat};
\logLogSlopeTriangle{0.8}{0.2}{0.57}{-1}{black}{\scriptsize};
\logLogSlopeTrianglelow{0.8}{0.2}{0.08}{-1}{black}{\scriptsize};

\legend{
$\lambda_\bullet'$ (unif.),
$\lambda_\bullet''$ (unif.),
$\lambda_\bullet'''$ (unif.),
$\mu_\bullet'$ (unif.),
$\mu_\bullet''$ (unif.),
$\mu_\bullet'''$ (unif.),
$\big(\norm{\nabla(u - u_\bullet)}{\Omega}^2+{{\rm osc}_\bullet^2}\big)^{1/2}$ (unif.),
$\lambda_\bullet'$ (adap.),
$\lambda_\bullet''$ (adap.),
$\lambda_\bullet'''$ (adap.),
$\mu_\bullet'$ (adap.),
$\mu_\bullet''$ (adap.),
$\mu_\bullet'''$ (adap.),
$\big(\norm{\nabla(u - u_\bullet)}{\Omega}^2+{{\rm osc}_\bullet^2}\big)^{1/2}$ (adap.),
}
\end{loglogaxis}
\end{tikzpicture}
}
\caption{Experiment from Section~\ref{ex1} with known smooth solution. We use the (minimal) refinement with (M3')
for both, uniform and adaptive refinement.
}
\label{fig:mybsp1errornvb5}
\end{figure}

%
\begin{figure}
\centering
\subfigure[${\mathcal{S}}^1$-FEM.]{\label{subfig:effectivityreliabilityindexoscS1}
\begin{tikzpicture}[scale=0.5]
\begin{axis}[width=0.92\textwidth,
xmode=log, xlabel={\small \# elements}, ylabel={\small efficiency index (lower half), reliability index (upper half)}, font={\scriptsize},
ymin=0.5,ymax=1.6, legend columns=2,
legend style={font=\small, draw=none, fill=none, cells={anchor=west}, legend pos=north west,
/tikz/column 2/.style={column sep=20pt},at={(axis cs:4000,1.5)},anchor=north west}]
\addplot [color=red!50!green, mark=x, line width=0.5pt, mark size=3pt, mark options={solid, red!50!green}]          table [x=nrelements,y=reliabilityindexeta5] {datanumerics/mybsp1-P1FEM-nvb3eta1T2ada.dat};
\addplot[color=gray, mark=asterisk, line width=0.5pt, mark size=3pt, mark options={solid, gray}]                    table [x=nrelements,y=reliabilityindexeta5] {datanumerics/mybsp1-P1FEM-nvb5eta2T2ada.dat};
\addplot  [color=black!50!red, mark=+, line width=0.5pt, mark size=3pt, mark options={solid, black!50!red}]         table [x=nrelements,y=reliabilityindexeta5] {datanumerics/mybsp2-P1FEM-nvb3eta1T2ada.dat};
\addplot [color=violet, mark=diamond, line width=0.5pt, mark size=3pt, mark options={solid, violet}]                table [x=nrelements,y=reliabilityindexeta5] {datanumerics/mybsp2-P1FEM-nvb5eta2T2ada.dat};
\addplot [color=black, line width=0.5pt, style=dashed]                 coordinates{(12,1.1547) (4*10^6,1.1547)};
\addplot [color=black, line width=0.5pt, style=dashed]                 coordinates{(12,1.0954) (4*10^6,1.0954)};
\node[above] at (axis cs:10^5,1.0857) {\small(M3')};
\node[above] at (axis cs:10^5,1.145) {\small(M3)};
\addplot  [color=red!50!green, mark=x, line width=0.5pt, mark size=3pt, mark options={solid, red!50!green}]         table [x=nrelements,y=effectivityindexeta2] {datanumerics/mybsp1-P1FEM-nvb3eta1T2ada.dat};
\addplot[color=gray, mark=asterisk, line width=0.5pt, mark size=3pt, mark options={solid, gray}]                    table [x=nrelements,y=effectivityindexeta2] {datanumerics/mybsp1-P1FEM-nvb5eta2T2ada.dat};
\addplot [color=black!50!red, mark=+, line width=0.5pt, mark size=3pt, mark options={solid, black!50!red}]          table [x=nrelements,y=effectivityindexeta2] {datanumerics/mybsp2-P1FEM-nvb3eta1T2ada.dat};
\addplot [color=violet, mark=diamond, line width=0.5pt, mark size=3pt, mark options={solid, violet}]                table [x=nrelements,y=effectivityindexeta2] {datanumerics/mybsp2-P1FEM-nvb5eta2T2ada.dat};

\legend{
Ex.~1 (M3),
Ex.~1 (M3'),
Ex.~2 (M3),
Ex.~2 (M3'),
}
\end{axis}
\end{tikzpicture}
}
\hspace{0.015\textwidth}
\subfigure[${\mathcal{S}}^2$-FEM]{\label{subfig:effectivityreliabilityindexoscS2}
\begin{tikzpicture}[scale=0.5]
\begin{axis}[width=0.92\textwidth,
xmode=log, xlabel={\small \# elements}, ylabel={\small efficiency index (lower half), reliability index (upper half)}, font={\scriptsize},
ymin=0.5,ymax=1.6, legend columns=2,
legend style={font=\small, draw=none, fill=none, cells={anchor=west}, legend pos=south east,
/tikz/column 2/.style={column sep=20pt},at={(axis cs:4000,1.5)},anchor=north west}]
\addplot  [color=red!50!green, mark=x, line width=0.5pt, mark size=3pt, mark options={solid, red!50!green}]         table [x=nrelements,y=reliabilityindexeta5] {datanumerics/mybsp1-P2FEM-nvb3eta3T2ada.dat};
\addplot[color=gray, mark=asterisk, line width=0.5pt, mark size=3pt, mark options={solid, gray}]                    table [x=nrelements,y=reliabilityindexeta5] {datanumerics/mybsp1-P2FEM-nvb5eta2T2ada.dat};
\addplot [color=black!50!red, mark=+, line width=0.5pt, mark size=3pt, mark options={solid, black!50!red}]          table [x=nrelements,y=reliabilityindexeta5] {datanumerics/mybsp2-P2FEM-nvb3eta3T2ada.dat};
\addplot[color=violet, mark=diamond, line width=0.5pt, mark size=3pt, mark options={solid, violet}]                 table [x=nrelements,y=reliabilityindexeta5] {datanumerics/mybsp2-P2FEM-nvb5eta2T2ada.dat};
\addplot [color=black, line width=0.5pt, style=dashed]                 coordinates{(12,1.0328) (4*10^6,1.0328)};
\addplot [color=black, line width=0.5pt, style=dashed]                 coordinates{(12,1.0142) (4*10^6,1.0142)};
\node[below] at (axis cs:10^1.55,1.02) {\small(M3')};
\node[above] at (axis cs:10^1.55,1.027) {\small(M3)};
\addplot[color=red!50!green, mark=x, line width=0.5pt, mark size=3pt, mark options={solid, red!50!green}]           table [x=nrelements,y=effectivityindexeta2] {datanumerics/mybsp1-P2FEM-nvb3eta3T2ada.dat};
\addplot [color=gray, mark=asterisk, line width=0.5pt, mark size=3pt, mark options={solid, gray}]                   table [x=nrelements,y=effectivityindexeta2] {datanumerics/mybsp1-P2FEM-nvb5eta2T2ada.dat};
\addplot[color=black!50!red, mark=+, line width=0.5pt, mark size=3pt, mark options={solid, black!50!red}]           table [x=nrelements,y=effectivityindexeta2] {datanumerics/mybsp2-P2FEM-nvb3eta3T2ada.dat};
\addplot [color=violet, mark=diamond, line width=0.5pt, mark size=3pt, mark options={solid, violet}]                table [x=nrelements,y=effectivityindexeta2] {datanumerics/mybsp2-P2FEM-nvb5eta2T2ada.dat};

\legend{
Ex.~1 (M3),
Ex.~1 (M3'),
Ex.~2 (M3),
Ex.~2 (M3'),
}
\end{axis}
\end{tikzpicture}
}
\caption{Reliability index $(\norm{\nabla(u - u_\bullet)}{\Omega}^2+{{\rm osc}_\bullet^2})^{1/2} / \mu_\bullet''$ (upper half)
and efficiency index $\lambda_\bullet'' / (\norm{\nabla(u - u_\bullet)}{\Omega}^2+{{\rm osc}_\bullet^2})^{1/2}$ (lower half)
for the example (Ex.~1) in Section~\ref{ex1} and for the example (Ex.~2) in Section~\ref{ex2},
and for adaptive refinement with (M3) and (M3') with the corresponding refinement indicator $\eta_\bullet(T,\widehat u_\bullet)$ defined in~\eqref{eq:refinementindicator}.
The (asymptotic) upper bounds for the reliability indices predicted in Remark~\ref{rem:estimator} are highlighted with dashed black  lines.
}
\label{fig:effectivityreliabilityindexosc}
\end{figure}

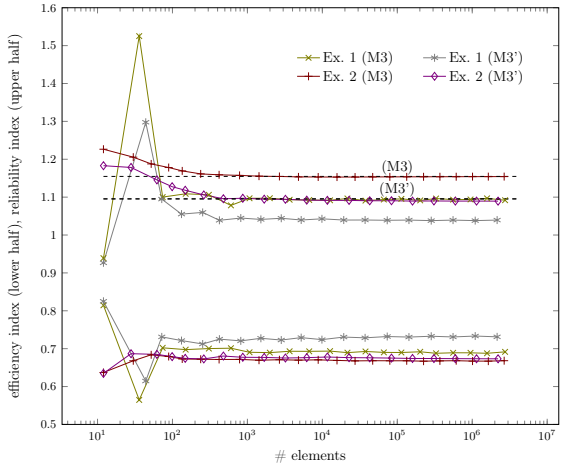
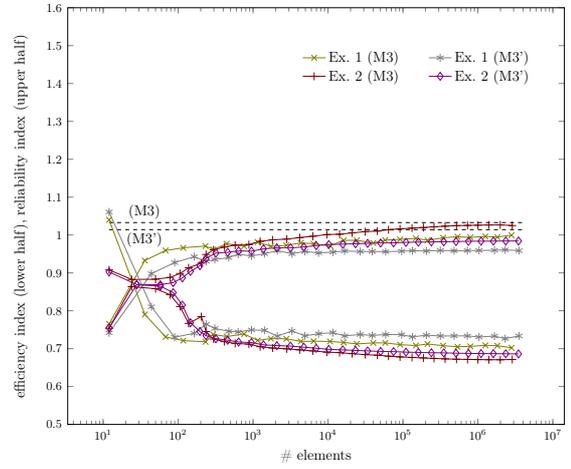
\begin{figure}
\centering
\subfigure[${\mathcal{S}}^1$-FEM.]{\label{subfig:effectivityreliabilityindexS1}
\begin{tikzpicture}[scale=0.5]
\begin{axis}[width=0.92\textwidth,
xmode=log, xlabel={\small \# elements}, ylabel={\small efficiency index (lower half), reliability index (upper half)}, font={\scriptsize},
ymin=0.5,ymax=1.6, legend columns=2,
legend style={font=\small, draw=none, fill=none, cells={anchor=west}, legend pos=north east,
/tikz/column 2/.style={column sep=20pt},at={(axis cs:4000,1.5)},anchor=north west}]
\addplot [color=red!50!green, mark=x, line width=0.5pt, mark size=3pt, mark options={solid, red!50!green}]          table [x=nrelements,y=reliabilityindexmu] {datanumerics/mybsp1-P1FEM-nvb3eta1T2ada.dat};
\addplot [color=gray, mark=asterisk, line width=0.5pt, mark size=3pt, mark options={solid, gray}]                   table [x=nrelements,y=reliabilityindexmu] {datanumerics/mybsp1-P1FEM-nvb5eta2T2ada.dat};
\addplot [color=black!50!red, mark=+, line width=0.5pt, mark size=3pt, mark options={solid, black!50!red}]          table [x=nrelements,y=reliabilityindexmu] {datanumerics/mybsp2-P1FEM-nvb3eta1T2ada.dat};
\addplot [color=violet, mark=diamond, line width=0.5pt, mark size=3pt, mark options={solid, violet}]                table [x=nrelements,y=reliabilityindexmu] {datanumerics/mybsp2-P1FEM-nvb5eta2T2ada.dat};
\addplot [color=black, line width=0.5pt, style=dashed]                 coordinates{(12,1.1547) (4*10^6,1.1547)};
\addplot [color=black, line width=0.5pt, style=dashed]                 coordinates{(12,1.0954) (4*10^6,1.0954)};
\addplot [color=black, line width=0.5pt, style=dashed]                 coordinates{(12,1.0954) (4*10^6,1.0954)};
\node[above] at (axis cs:10^5,1.0857) {\small(M3')};
\node[above] at (axis cs:10^5,1.145) {\small(M3)};
\addplot [color=red!50!green, mark=x, line width=0.5pt, mark size=3pt, mark options={solid, red!50!green}]          table [x=nrelements,y=effectivityindexlambda] {datanumerics/mybsp1-P1FEM-nvb3eta1T2ada.dat};
\addplot  [color=gray, mark=asterisk, line width=0.5pt, mark size=3pt, mark options={solid, gray}]                  table [x=nrelements,y=effectivityindexlambda] {datanumerics/mybsp1-P1FEM-nvb5eta2T2ada.dat};
\addplot [color=black!50!red, mark=+, line width=0.5pt, mark size=3pt, mark options={solid, black!50!red}]          table [x=nrelements,y=effectivityindexlambda] {datanumerics/mybsp2-P1FEM-nvb3eta1T2ada.dat};
\addplot [color=violet, mark=diamond, line width=0.5pt, mark size=3pt, mark options={solid, violet}]                table [x=nrelements,y=effectivityindexlambda] {datanumerics/mybsp2-P1FEM-nvb5eta2T2ada.dat};

\legend{
Ex.~1 (M3),
Ex.~1 (M3'),
Ex.~2 (M3),
Ex.~2 (M3'),
}
\end{axis}
\end{tikzpicture}
}
\hspace{0.015\textwidth}
\subfigure[${\mathcal{S}}^2$-FEM]{\label{subfig:effectivityreliabilityindexS2}
\begin{tikzpicture}[scale=0.5]
\begin{axis}[width=0.92\textwidth,
xmode=log, xlabel={\small \# elements}, ylabel={\small efficiency index (lower half), reliability index (upper half)}, font={\scriptsize},
ymin=0.5,ymax=1.6, legend columns=2,
legend style={font=\small, draw=none, fill=none, cells={anchor=west}, legend pos=south east,
/tikz/column 2/.style={column sep=20pt},at={(axis cs:4000,1.5)},anchor=north west}]
\addplot [color=red!50!green, mark=x, line width=0.5pt, mark size=3pt, mark options={solid, red!50!green}]            table [x=nrelements,y=reliabilityindexmu] {datanumerics/mybsp1-P2FEM-nvb3eta3T2ada.dat};
\addplot [color=gray, mark=asterisk, line width=0.5pt, mark size=3pt, mark options={solid, gray}]                     table [x=nrelements,y=reliabilityindexmu] {datanumerics/mybsp1-P2FEM-nvb5eta2T2ada.dat};
\addplot [color=black!50!red, mark=+, line width=0.5pt, mark size=3pt, mark options={solid, black!50!red}]            table [x=nrelements,y=reliabilityindexmu] {datanumerics/mybsp2-P2FEM-nvb3eta3T2ada.dat};
\addplot [color=violet, mark=diamond, line width=0.5pt, mark size=3pt, mark options={solid, violet}]                  table [x=nrelements,y=reliabilityindexmu] {datanumerics/mybsp2-P2FEM-nvb5eta2T2ada.dat};
\addplot [color=black, line width=0.5pt, style=dashed]                 coordinates{(12,1.0328) (4*10^6,1.0328)};
\addplot [color=black, line width=0.5pt, style=dashed]                 coordinates{(12,1.0142) (4*10^6,1.0142)};
\node[below] at (axis cs:10^1.55,1.02) {\small(M3')};
\node[above] at (axis cs:10^1.55,1.027) {\small(M3)};
\addplot [color=red!50!green, mark=x, line width=0.5pt, mark size=3pt, mark options={solid, red!50!green}]            table [x=nrelements,y=effectivityindexlambda] {datanumerics/mybsp1-P2FEM-nvb3eta3T2ada.dat};
\addplot [color=gray, mark=asterisk, line width=0.5pt, mark size=3pt, mark options={solid, gray}]                     table [x=nrelements,y=effectivityindexlambda] {datanumerics/mybsp1-P2FEM-nvb5eta2T2ada.dat};
\addplot  [color=black!50!red, mark=+, line width=0.5pt, mark size=3pt, mark options={solid, black!50!red}]           table [x=nrelements,y=effectivityindexlambda] {datanumerics/mybsp2-P2FEM-nvb3eta3T2ada.dat};
\addplot [color=violet, mark=diamond, line width=0.5pt, mark size=3pt, mark options={solid, violet}]                  table [x=nrelements,y=effectivityindexlambda] {datanumerics/mybsp2-P2FEM-nvb5eta2T2ada.dat};

\legend{
Ex.~1 (M3),
Ex.~1 (M3'),
Ex.~2 (M3),
Ex.~2 (M3'),
}
\end{axis}
\end{tikzpicture}
}
\caption{Reliability index $\norm{\nabla(u - u_\bullet)}{\Omega}/\mu_\bullet(\widehat{u}_\bullet)$ (upper half)
and efficiency index $\lambda_\bullet(\widehat{u}_\bullet)/\norm{\nabla(u - u_\bullet)}{\Omega}$ (lower half)
for the example (Ex.~1) in Section~\ref{ex1} and for the example (Ex.~2) in Section~\ref{ex2},
and for
adaptive refinement with (M3) and (M3') with the corresponding refinement indicator $\eta_\bullet(T,\widehat u_\bullet)$ defined in~\eqref{eq:refinementindicator}.
The (asymptotic) upper bounds for the reliability indices predicted in Remark~\ref{rem:estimator} are highlighted with dashed black  lines.
}
\label{fig:effectivityreliabilityindex}
\end{figure}
\subsection{Experiment with known smooth solution}
\label{ex1}
We prescribe the exact solution
\begin{align*}
u(x_1,x_2) = (1-10x_1^2-10x_2^2)e^{-5(x_1^2+x_2^2)} \quad\text{with } x=(x_1,x_2)\in\mathbb{R}^2.
\end{align*}
This also defines inhomogeneous Dirichlet
conditions and the right-hand side $f$ is calculated appropriately.
Since $u$ is smooth, uniform as well as adaptive mesh refinement
with (M3) or (M3') lead to optimal convergence behavior of ${\mathcal{O}}(N^{-1/2})$
and ${\mathcal{O}}(N^{-1})$
for ${\mathcal{S}}^1$-FEM and ${\mathcal{S}}^2$-FEM, respectively;
see Figure~\ref{fig:mybsp1errornvb3} for (M3) and Figure~\ref{fig:mybsp1errornvb5} for (M3').
In Figure~\ref{fig:effectivityreliabilityindexosc} and
Figure~\ref{fig:effectivityreliabilityindex}, we consider corresponding reliability and efficiency indices for adaptive refinement, which empirically confirm Remark~\ref{rem:estimator} (i) and (ii) (for inhomogeneous Dirichlet conditions).
Note that $C_{\rm son}=4$ for (M3) and $C_{\rm son}=6$ for (M3').

\begin{figure}
\centering
\subfigure[${\mathcal{S}}^1$-FEM with (M3) and $\eta_\bullet(T,\widehat u_\bullet)^2 :=\lambda_\bullet(T,\widehat u_\bullet)^2+{\rm res}_\bullet(T,\widehat u_\bullet)^2$.]{\label{subfig:mybsp2errornvb3P1}
\begin{tikzpicture}[scale=0.5]
\begin{loglogaxis}[width=0.92\textwidth,
xlabel={\small \# elements}, ylabel={\small error and error estimator}, font={\scriptsize}, 
ymin=1e-7,ymax=1e0,
legend style={font=\small, draw=none, fill=none, cells={anchor=west}, legend pos=south west}]
\addplot [color=red!50!green, mark=x, line width=0.5pt, mark size=3pt, mark options={solid, red!50!green}]                     table [x=nrelements,y=eta1] {datanumerics/mybsp2-P1FEM-nvb3eta1T2uni.dat};
\addplot [color=gray, mark=asterisk, line width=0.5pt, mark size=3pt, mark options={solid, gray}]                              table [x=nrelements,y=eta2] {datanumerics/mybsp2-P1FEM-nvb3eta1T2uni.dat};
\addplot [color=black!50!red, mark=+, line width=0.5pt, mark size=3pt, mark options={solid, black!50!red}]                     table [x=nrelements,y=eta4] {datanumerics/mybsp2-P1FEM-nvb3eta1T2uni.dat};
\addplot [color=violet, mark=diamond, line width=0.5pt, mark size=3pt, mark options={solid, violet}]                           table [x=nrelements,y=eta5] {datanumerics/mybsp2-P1FEM-nvb3eta1T2uni.dat};
\addplot [color=black, mark=o, line width=0.5pt, mark size=3pt, mark options={solid, black}]                                   table [x=nrelements,y=errorH1semiosc] {datanumerics/mybsp2-P1FEM-nvb3eta1T2uni.dat};
\addplot [color=black!50!green, mark=triangle, line width=0.5pt, mark size=3pt, mark options={solid, black!50!green}]          table [x=nrelements,y=eta1] {datanumerics/mybsp2-P1FEM-nvb3eta1T2ada.dat};
\addplot [color=black!50!blue, mark=triangle, line width=0.5pt, mark size=3pt, mark options={solid, rotate=90, black!50!blue}] table [x=nrelements,y=eta2] {datanumerics/mybsp2-P1FEM-nvb3eta1T2ada.dat};
\addplot [color=black!50!red, mark=triangle, line width=0.5pt, mark size=3pt, mark options={solid, rotate=180, black!50!red}]  table [x=nrelements,y=eta4] {datanumerics/mybsp2-P1FEM-nvb3eta1T2ada.dat};
\addplot [color=blue, mark=triangle, line width=0.5pt, mark size=3pt, mark options={solid, rotate=270, blue}]                  table [x=nrelements,y=eta5] {datanumerics/mybsp2-P1FEM-nvb3eta1T2ada.dat};
\addplot [color=red, mark=square, line width=0.5pt, mark size=3pt, mark options={solid, red}]                                  table [x=nrelements,y=errorH1semiosc] {datanumerics/mybsp2-P1FEM-nvb3eta1T2ada.dat};
%
\logLogSlopeTriangle{0.8}{0.2}{0.82}{-1/3}{black}{\scriptsize};
\logLogSlopeTrianglelow{0.8}{0.2}{0.525}{-1/2}{black}{\scriptsize};

\legend{
$\lambda_\bullet'$ (unif.),
$\lambda_\bullet''$ (unif.),
$\mu_\bullet'$ (unif.),
$\mu_\bullet''$ (unif.),
$\big(\norm{\nabla(u - u_\bullet)}{\Omega}^2+{{\rm osc}_\bullet^2}\big)^{1/2}$ (unif.),
$\lambda_\bullet'$ (adap.),
$\lambda_\bullet''$ (adap.),
$\mu_\bullet'$ (adap.),
$\mu_\bullet''$ (adap.),
$\big(\norm{\nabla(u - u_\bullet)}{\Omega}^2+{{\rm osc}_\bullet^2}\big)^{1/2}$ (adap.),
}
\end{loglogaxis}
\end{tikzpicture}
}
\hspace{0.015\textwidth}
\subfigure[${\mathcal{S}}^2$-FEM with (M3) and $\eta_\bullet(T,\widehat u_\bullet)^2 :=\lambda_\bullet(T,\widehat u_\bullet)^2+{\rm apx}_\bullet(T)^2$.]{\label{subfig:mybsp2errornvb3P2}
\begin{tikzpicture}[scale=0.5]
\begin{loglogaxis}[width=0.92\textwidth,
xlabel={\small \# elements}, ylabel={\small error and error estimator}, font={\scriptsize}, 
ymin=1e-7,ymax=1e0,
legend style={font=\small, draw=none, fill=none, cells={anchor=west}, legend pos=south west}]
\addplot [color=red!50!green, mark=x, line width=0.5pt, mark size=3pt, mark options={solid, red!50!green}]                    table [x=nrelements,y=eta1] {datanumerics/mybsp2-P2FEM-nvb3eta3T2uni.dat};
\addplot [color=gray, mark=asterisk, line width=0.5pt, mark size=3pt, mark options={solid, gray}]                             table [x=nrelements,y=eta2] {datanumerics/mybsp2-P2FEM-nvb3eta3T2uni.dat};
\addplot [color=black!25!red, mark=otimes, line width=0.5pt, mark size=2.5pt, mark options={solid, black!25!red}]             table [x=nrelements,y=eta3] {datanumerics/mybsp2-P2FEM-nvb3eta3T2uni.dat};
\addplot [color=black!50!red, mark=+, line width=0.5pt, mark size=3pt, mark options={solid, black!50!red}]                    table [x=nrelements,y=eta4] {datanumerics/mybsp2-P2FEM-nvb3eta3T2uni.dat};
\addplot [color=violet, mark=diamond, line width=0.5pt, mark size=3pt, mark options={solid, violet}]                          table [x=nrelements,y=eta5] {datanumerics/mybsp2-P2FEM-nvb3eta3T2uni.dat};
\addplot [color=black!25!yellow, mark=Mercedes star, line width=0.5pt, mark size=3pt, mark options={solid, black!25!yellow}]  table [x=nrelements,y=eta6] {datanumerics/mybsp2-P2FEM-nvb3eta3T2uni.dat};
\addplot [color=black, mark=o, line width=0.5pt, mark size=3pt, mark options={solid, black}]                                  table [x=nrelements,y=errorH1semiosc] {datanumerics/mybsp2-P2FEM-nvb3eta3T2uni.dat};
\addplot [color=black!50!green, mark=triangle, line width=0.5pt, mark size=3pt, mark options={solid, black!50!green}]         table [x=nrelements,y=eta1] {datanumerics/mybsp2-P2FEM-nvb3eta3T2ada.dat};
\addplot [color=black!50!blue, mark=triangle, line width=0.5pt, mark size=3pt, mark options={solid, rotate=90, black!50!blue}]table [x=nrelements,y=eta2] {datanumerics/mybsp2-P2FEM-nvb3eta3T2ada.dat};
\addplot [color=black!75!green, mark=pentagon, line width=0.5pt, mark size=2.5pt, mark options={solid, black!75!green}]       table [x=nrelements,y=eta3] {datanumerics/mybsp2-P2FEM-nvb3eta3T2ada.dat};
\addplot [color=black!50!red, mark=triangle, line width=0.5pt, mark size=3pt, mark options={solid, rotate=180, black!50!red}] table [x=nrelements,y=eta4] {datanumerics/mybsp2-P2FEM-nvb3eta3T2ada.dat};
\addplot [color=blue, mark=triangle, line width=0.5pt, mark size=3pt, mark options={solid, rotate=270, blue}]                 table [x=nrelements,y=eta5] {datanumerics/mybsp2-P2FEM-nvb3eta3T2ada.dat};
\addplot [color=brown, mark=oplus, line width=0.5pt, mark size=2.5pt, mark options={solid, brown}]                            table [x=nrelements,y=eta6] {datanumerics/mybsp2-P2FEM-nvb3eta3T2ada.dat};
\addplot [color=red, mark=square, line width=0.5pt, mark size=3pt, mark options={solid, red}]                                 table [x=nrelements,y=errorH1semiosc] {datanumerics/mybsp2-P2FEM-nvb3eta3T2ada.dat};
\logLogSlopeTriangle{0.8}{0.2}{0.85}{-1/3}{black}{\scriptsize};
\logLogSlopeTrianglelow{0.8}{0.2}{0.15}{-1}{black}{\scriptsize};

\legend{
$\lambda_\bullet'$ (unif.),
$\lambda_\bullet''$ (unif.),
$\lambda_\bullet'''$ (unif.),
$\mu_\bullet'$ (unif.),
$\mu_\bullet''$ (unif.),
$\mu_\bullet'''$ (unif.),
$\big(\norm{\nabla(u - u_\bullet)}{\Omega}^2+{{\rm osc}_\bullet^2}\big)^{1/2}$ (unif.),
$\lambda_\bullet'$ (adap.),
$\lambda_\bullet''$ (adap.),
$\lambda_\bullet'''$ (adap.),
$\mu_\bullet'$ (adap.),
$\mu_\bullet''$ (adap.),
$\mu_\bullet'''$ (adap.),
$\big(\norm{\nabla(u - u_\bullet)}{\Omega}^2+{{\rm osc}_\bullet^2}\big)^{1/2}$ (adap.),
}
\end{loglogaxis}
\end{tikzpicture}
}
\caption{Experiment from Section~\ref{ex2} with known solution with generic singularity.
We use the (minimal) refinement with (M3)
for both, uniform and adaptive refinement.
}
\label{fig:mybsp2errornvb3}
\end{figure}
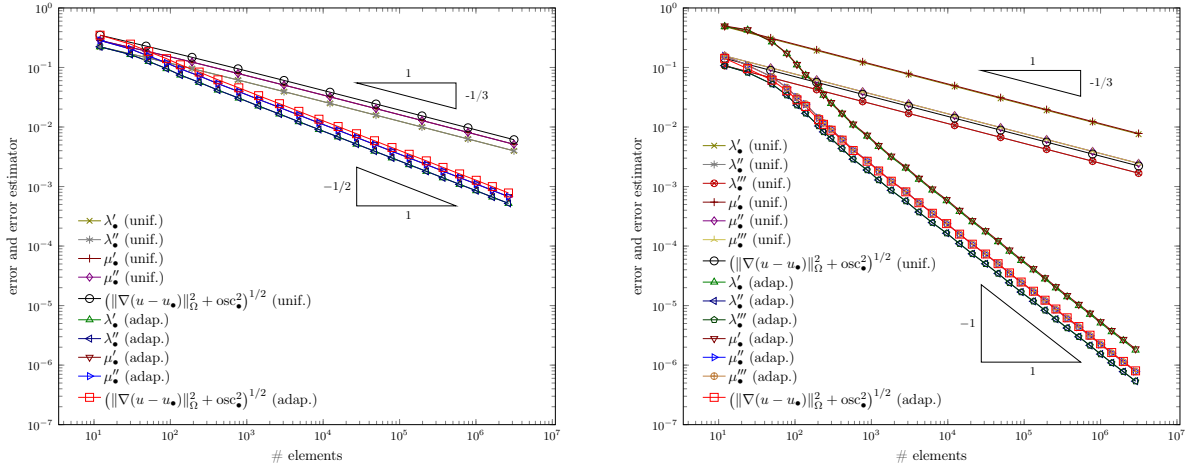

\begin{figure}
\centering
\subfigure[${\mathcal{S}}^1$-FEM with (M3') and $\eta_\bullet(T,\widehat u_\bullet)^2 :=\lambda_\bullet(T,\widehat u_\bullet)^2+{\rm osc}_\bullet(T)^2$.]{\label{subfig:mybsp2errornvb5P1}
\begin{tikzpicture}[scale=0.5]
\begin{loglogaxis}[width=0.92\textwidth,
xlabel={\small \# elements}, ylabel={\small error and error estimator}, font={\scriptsize}, 
ymin=1e-7,ymax=1e0,
legend style={font=\small, draw=none, fill=none, cells={anchor=west}, legend pos=south west}]
\addplot [color=red!50!green, mark=x, line width=0.5pt, mark size=3pt, mark options={solid, red!50!green}]                     table [x=nrelements,y=eta1] {datanumerics/mybsp2-P1FEM-nvb5eta2T2uni.dat};
\addplot [color=gray, mark=asterisk, line width=0.5pt, mark size=3pt, mark options={solid, gray}]                              table [x=nrelements,y=eta2] {datanumerics/mybsp2-P1FEM-nvb5eta2T2uni.dat};
\addplot [color=black!50!red, mark=+, line width=0.5pt, mark size=3pt, mark options={solid, black!50!red}]                     table [x=nrelements,y=eta4] {datanumerics/mybsp2-P1FEM-nvb5eta2T2uni.dat};
\addplot [color=violet, mark=diamond, line width=0.5pt, mark size=3pt, mark options={solid, violet}]                           table [x=nrelements,y=eta5] {datanumerics/mybsp2-P1FEM-nvb5eta2T2uni.dat};
\addplot [color=black, mark=o, line width=0.5pt, mark size=3pt, mark options={solid, black}]                                   table [x=nrelements,y=errorH1semiosc] {datanumerics/mybsp2-P1FEM-nvb5eta2T2uni.dat};
\addplot [color=black!50!green, mark=triangle, line width=0.5pt, mark size=3pt, mark options={solid, black!50!green}]          table [x=nrelements,y=eta1] {datanumerics/mybsp2-P1FEM-nvb5eta2T2ada.dat};
\addplot [color=black!50!blue, mark=triangle, line width=0.5pt, mark size=3pt, mark options={solid, rotate=90, black!50!blue}] table [x=nrelements,y=eta2] {datanumerics/mybsp2-P1FEM-nvb5eta2T2ada.dat};
\addplot [color=black!50!red, mark=triangle, line width=0.5pt, mark size=3pt, mark options={solid, rotate=180, black!50!red}]  table [x=nrelements,y=eta4] {datanumerics/mybsp2-P1FEM-nvb5eta2T2ada.dat};
\addplot [color=blue, mark=triangle, line width=0.5pt, mark size=3pt, mark options={solid, rotate=270, blue}]                  table [x=nrelements,y=eta5] {datanumerics/mybsp2-P1FEM-nvb5eta2T2ada.dat};
\addplot [color=red, mark=square, line width=0.5pt, mark size=3pt, mark options={solid, red}]                                  table [x=nrelements,y=errorH1semiosc] {datanumerics/mybsp2-P1FEM-nvb5eta2T2ada.dat};
%
\logLogSlopeTriangle{0.8}{0.2}{0.85}{-1/3}{black}{\scriptsize};
\logLogSlopeTrianglelow{0.8}{0.2}{0.525}{-1/2}{black}{\scriptsize};

\legend{
$\lambda_\bullet'$ (unif.),
$\lambda_\bullet''$ (unif.),
$\mu_\bullet'$ (unif.),
$\mu_\bullet''$ (unif.),
$\big(\norm{\nabla(u - u_\bullet)}{\Omega}^2+{{\rm osc}_\bullet^2}\big)^{1/2}$ (unif.),
$\lambda_\bullet'$ (adap.),
$\lambda_\bullet''$ (adap.),
$\mu_\bullet'$ (adap.),
$\mu_\bullet''$ (adap.),
$\big(\norm{\nabla(u - u_\bullet)}{\Omega}^2+{{\rm osc}_\bullet^2}\big)^{1/2}$ (adap.),
}
\end{loglogaxis}
\end{tikzpicture}
}
\hspace{0.015\textwidth}
\subfigure[${\mathcal{S}}^2$-FEM with (M3') and $\eta_\bullet(T,\widehat u_\bullet)^2 :=\lambda_\bullet(T,\widehat u_\bullet)^2+{\rm osc}_\bullet(T)^2$.]{\label{subfig:mybsp2errornvb5P2}
\begin{tikzpicture}[scale=0.5]
\begin{loglogaxis}[width=0.92\textwidth,
xlabel={\small \# elements}, ylabel={\small error and error estimator}, font={\scriptsize}, 
ymin=1e-7,ymax=1e0,
legend style={font=\small, draw=none, fill=none, cells={anchor=west}, legend pos=south west}]
\addplot [color=red!50!green, mark=x, line width=0.5pt, mark size=3pt, mark options={solid, red!50!green}]                    table [x=nrelements,y=eta1] {datanumerics/mybsp2-P2FEM-nvb5eta2T2uni.dat};
\addplot [color=gray, mark=asterisk, line width=0.5pt, mark size=3pt, mark options={solid, gray}]                             table [x=nrelements,y=eta2] {datanumerics/mybsp2-P2FEM-nvb5eta2T2uni.dat};
\addplot [color=black!25!red, mark=otimes, line width=0.5pt, mark size=2.5pt, mark options={solid, black!25!red}]             table [x=nrelements,y=eta3] {datanumerics/mybsp2-P2FEM-nvb5eta2T2uni.dat};
\addplot [color=black!50!red, mark=+, line width=0.5pt, mark size=3pt, mark options={solid, black!50!red}]                    table [x=nrelements,y=eta4] {datanumerics/mybsp2-P2FEM-nvb5eta2T2uni.dat};
\addplot [color=violet, mark=diamond, line width=0.5pt, mark size=3pt, mark options={solid, violet}]                          table [x=nrelements,y=eta5] {datanumerics/mybsp2-P2FEM-nvb5eta2T2uni.dat};
\addplot [color=black!25!yellow, mark=Mercedes star, line width=0.5pt, mark size=3pt, mark options={solid, black!25!yellow}]  table [x=nrelements,y=eta6] {datanumerics/mybsp2-P2FEM-nvb5eta2T2uni.dat};
\addplot [color=black, mark=o, line width=0.5pt, mark size=3pt, mark options={solid, black}]                                  table [x=nrelements,y=errorH1semiosc] {datanumerics/mybsp2-P2FEM-nvb5eta2T2uni.dat};
\addplot [color=black!50!green, mark=triangle, line width=0.5pt, mark size=3pt, mark options={solid, black!50!green}]         table [x=nrelements,y=eta1] {datanumerics/mybsp2-P2FEM-nvb5eta2T2ada.dat};
\addplot [color=black!50!blue, mark=triangle, line width=0.5pt, mark size=3pt, mark options={solid, rotate=90, black!50!blue}]table [x=nrelements,y=eta2] {datanumerics/mybsp2-P2FEM-nvb5eta2T2ada.dat};
\addplot [color=black!75!green, mark=pentagon, line width=0.5pt, mark size=2.5pt, mark options={solid, black!75!green}]       table [x=nrelements,y=eta3] {datanumerics/mybsp2-P2FEM-nvb5eta2T2ada.dat};
\addplot [color=black!50!red, mark=triangle, line width=0.5pt, mark size=3pt, mark options={solid, rotate=180, black!50!red}] table [x=nrelements,y=eta4] {datanumerics/mybsp2-P2FEM-nvb5eta2T2ada.dat};
\addplot [color=blue, mark=triangle, line width=0.5pt, mark size=3pt, mark options={solid, rotate=270, blue}]                 table [x=nrelements,y=eta5] {datanumerics/mybsp2-P2FEM-nvb5eta2T2ada.dat};
\addplot [color=brown, mark=oplus, line width=0.5pt, mark size=2.5pt, mark options={solid, brown}]                            table [x=nrelements,y=eta6] {datanumerics/mybsp2-P2FEM-nvb5eta2T2ada.dat};
\addplot [color=red, mark=square, line width=0.5pt, mark size=3pt, mark options={solid, red}]                                 table [x=nrelements,y=errorH1semiosc] {datanumerics/mybsp2-P2FEM-nvb5eta2T2ada.dat};
\logLogSlopeTriangle{0.8}{0.2}{0.875}{-1/3}{black}{\scriptsize};
\logLogSlopeTrianglelow{0.8}{0.2}{0.12}{-1}{black}{\scriptsize};

\legend{
$\lambda_\bullet'$ (unif.),
$\lambda_\bullet''$ (unif.),
$\lambda_\bullet'''$ (unif.),
$\mu_\bullet'$ (unif.),
$\mu_\bullet''$ (unif.),
$\mu_\bullet'''$ (unif.),
$\big(\norm{\nabla(u - u_\bullet)}{\Omega}^2+{{\rm osc}_\bullet^2}\big)^{1/2}$ (unif.),
$\lambda_\bullet'$ (adap.),
$\lambda_\bullet''$ (adap.),
$\lambda_\bullet'''$ (adap.),
$\mu_\bullet'$ (adap.),
$\mu_\bullet''$ (adap.),
$\mu_\bullet'''$ (adap.),
$\big(\norm{\nabla(u - u_\bullet)}{\Omega}^2+{{\rm osc}_\bullet^2}\big)^{1/2}$ (adap.),
}
\end{loglogaxis}
\end{tikzpicture}
}
\caption{Experiment from Section~\ref{ex2} with known solution with generic singularity.
We use the (minimal) refinement with (M3')
for both, uniform and adaptive refinement.
}
\label{fig:mybsp2errornvb5}
\end{figure}
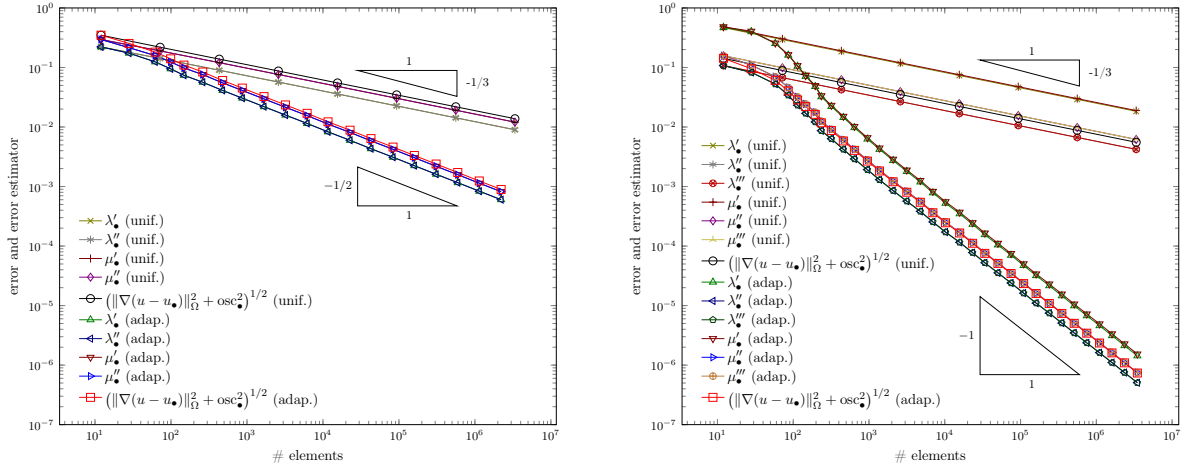
\subsection{Experiment with known solution with generic singularity}
\label{ex2}
We prescribe the exact solution in polar coordinates by
\begin{align*}
      u(x_1,x_2) = r^{2/3}\sin(2\varphi/3)
\quad\text{with } r\in[0,\infty), \varphi\in[0,2\pi).
\end{align*}
Hence, $f=0$ and the solution defines inhomogeneous
Dirichlet conditions. Furthermore, ${\rm osc}_\bullet=0$ and for ${\mathcal{S}}^1$-FEM even ${\rm res}_\bullet=0$.
This  implies that $\lambda_\bullet'=\lambda_\bullet''$ and $\mu_\bullet'=\mu_\bullet''$ for ${\mathcal{S}}^1$-FEM.
For ${\mathcal{S}}^2$-FEM, we additionally have that ${\rm apx}_\bullet=0$, which implies that $\lambda_\bullet''=\lambda_\bullet'''$ and $\mu_\bullet''=\mu_\bullet'''$.
It is well known that $u$ has a generic singularity at the reentrant corner $(0,0)$, which
leads to reduced regularity $u\in H^{1+2/3-\varepsilon}(\Omega)$ for all $\varepsilon>0$.
According to approximation theory we therefore get a reduced convergence order ${\mathcal{O}}(N^{-1/3})$ for
uniform refinement (M3), which is indeed observed in Figure~\ref{fig:mybsp2errornvb3}.
Our adaptive Algorithm~\ref{algorithm} recovers the optimal convergence rates
for ${\mathcal{S}}^1$-FEM and ${\mathcal{S}}^2$-FEM, which are plotted in Figure~\ref{fig:mybsp2errornvb3} for (M3)
and Figure~\ref{fig:mybsp2errornvb5} for (M3'). Hence, these figures also
verify Theorem~\ref{thm:abstract}.
In Figure~\ref{fig:effectivityreliabilityindexosc} and
Figure~\ref{fig:effectivityreliabilityindex}, we consider corresponding reliability and efficiency indices for adaptive refinement, which empirically confirm Remark~\ref{rem:estimator} (i) and (ii) (for inhomogeneous Dirichlet conditions).
Note that $C_{\rm son}=4$ for (M3) and $C_{\rm son}=6$ for (M3').

\begin{figure}
\centering
\subfigure[${\mathcal{S}}^1$-FEM with (M3) and $\eta_\bullet(T,\widehat u_\bullet)^2 :=\lambda_\bullet(T,\widehat u_\bullet)^2+{\rm res}_\bullet(T,\widehat u_\bullet)^2$.]{\label{subfig:mybsp3errornvb3P1}
\begin{tikzpicture}[scale=0.5]
\begin{loglogaxis}[width=0.92\textwidth,
xlabel={\small \# elements}, ylabel={\small error estimator}, font={\scriptsize},
ymin=1e-7,ymax=2e-0,
legend style={font=\small, draw=none, fill=none, cells={anchor=west}, legend pos=south west}]
\addplot [color=red!50!green, mark=x, line width=0.5pt, mark size=3pt, mark options={solid, red!50!green}]                     table [x=nrelements,y=eta1] {datanumerics/mybsp3-P1FEM-nvb3eta1T2uni.dat};
\addplot [color=gray, mark=asterisk, line width=0.5pt, mark size=3pt, mark options={solid, gray}]                              table [x=nrelements,y=eta2] {datanumerics/mybsp3-P1FEM-nvb3eta1T2uni.dat};
\addplot [color=black!50!red, mark=+, line width=0.5pt, mark size=3pt, mark options={solid, black!50!red}]                     table [x=nrelements,y=eta4] {datanumerics/mybsp3-P1FEM-nvb3eta1T2uni.dat};
\addplot [color=violet, mark=diamond, line width=0.5pt, mark size=3pt, mark options={solid, violet}]                           table [x=nrelements,y=eta5] {datanumerics/mybsp3-P1FEM-nvb3eta1T2uni.dat};
\addplot [color=black!50!green, mark=triangle, line width=0.5pt, mark size=3pt, mark options={solid, black!50!green}]          table [x=nrelements,y=eta1] {datanumerics/mybsp3-P1FEM-nvb3eta1T2ada.dat};
\addplot [color=black!50!blue, mark=triangle, line width=0.5pt, mark size=3pt, mark options={solid, rotate=90, black!50!blue}] table [x=nrelements,y=eta2] {datanumerics/mybsp3-P1FEM-nvb3eta1T2ada.dat};
\addplot [color=black!50!red, mark=triangle, line width=0.5pt, mark size=3pt, mark options={solid, rotate=180, black!50!red}]  table [x=nrelements,y=eta4] {datanumerics/mybsp3-P1FEM-nvb3eta1T2ada.dat};
\addplot [color=blue, mark=triangle, line width=0.5pt, mark size=3pt, mark options={solid, rotate=270, blue}]                  table [x=nrelements,y=eta5] {datanumerics/mybsp3-P1FEM-nvb3eta1T2ada.dat};
\logLogSlopeTriangle{0.8}{0.2}{0.78}{-1/3}{black}{\scriptsize};
\logLogSlopeTrianglelow{0.8}{0.2}{0.525}{-1/2}{black}{\scriptsize};

\legend{
$\lambda_\bullet'$ (unif.),
$\lambda_\bullet''$ (unif.),
$\mu_\bullet'$ (unif.),
$\mu_\bullet''$ (unif.),
$\lambda_\bullet'$ (adap.),
$\lambda_\bullet''$ (adap.),
$\mu_\bullet'$ (adap.),
$\mu_\bullet''$ (adap.),
}
\end{loglogaxis}
\end{tikzpicture}
}
\hspace{0.015\textwidth}
\subfigure[${\mathcal{S}}^2$-FEM with (M3) and $\eta_\bullet(T,\widehat u_\bullet)^2 :=\lambda_\bullet(T,\widehat u_\bullet)^2+{\rm apx}_\bullet(T)^2$.]{\label{subfig:mybsp3errornvb3P2}
\begin{tikzpicture}[scale=0.5]
\begin{loglogaxis}[width=0.92\textwidth,
xlabel={\small \# elements}, ylabel={\small error estimator}, font={\scriptsize},
ymin=1e-7,ymax=2e-0,
legend style={font=\small, draw=none, fill=none, cells={anchor=west}, legend pos=south west}]
\addplot [color=red!50!green, mark=x, line width=0.5pt, mark size=3pt, mark options={solid, red!50!green}]                    table [x=nrelements,y=eta1] {datanumerics/mybsp3-P2FEM-nvb3eta3T2uni.dat};
\addplot [color=gray, mark=asterisk, line width=0.5pt, mark size=3pt, mark options={solid, gray}]                             table [x=nrelements,y=eta2] {datanumerics/mybsp3-P2FEM-nvb3eta3T2uni.dat};
\addplot [color=black!25!red, mark=otimes, line width=0.5pt, mark size=2.5pt, mark options={solid, black!25!red}]             table [x=nrelements,y=eta3] {datanumerics/mybsp3-P2FEM-nvb3eta3T2uni.dat};
\addplot [color=black!50!red, mark=+, line width=0.5pt, mark size=3pt, mark options={solid, black!50!red}]                    table [x=nrelements,y=eta4] {datanumerics/mybsp3-P2FEM-nvb3eta3T2uni.dat};
\addplot [color=violet, mark=diamond, line width=0.5pt, mark size=3pt, mark options={solid, violet}]                          table [x=nrelements,y=eta5] {datanumerics/mybsp3-P2FEM-nvb3eta3T2uni.dat};
\addplot [color=black!25!yellow, mark=Mercedes star, line width=0.5pt, mark size=3pt, mark options={solid, black!25!yellow}]  table [x=nrelements,y=eta6] {datanumerics/mybsp3-P2FEM-nvb3eta3T2uni.dat};
\addplot [color=black!50!green, mark=triangle, line width=0.5pt, mark size=3pt, mark options={solid, black!50!green}]         table [x=nrelements,y=eta1] {datanumerics/mybsp3-P2FEM-nvb3eta3T2ada.dat};
\addplot [color=black!50!blue, mark=triangle, line width=0.5pt, mark size=3pt, mark options={solid, rotate=90, black!50!blue}]table [x=nrelements,y=eta2] {datanumerics/mybsp3-P2FEM-nvb3eta3T2ada.dat};
\addplot [color=black!75!green, mark=pentagon, line width=0.5pt, mark size=2.5pt, mark options={solid, black!75!green}]       table [x=nrelements,y=eta3] {datanumerics/mybsp3-P2FEM-nvb3eta3T2ada.dat};
\addplot [color=black!50!red, mark=triangle, line width=0.5pt, mark size=3pt, mark options={solid, rotate=180, black!50!red}] table [x=nrelements,y=eta4] {datanumerics/mybsp3-P2FEM-nvb3eta3T2ada.dat};
\addplot [color=blue, mark=triangle, line width=0.5pt, mark size=3pt, mark options={solid, rotate=270, blue}]                 table [x=nrelements,y=eta5] {datanumerics/mybsp3-P2FEM-nvb3eta3T2ada.dat};
\addplot [color=brown, mark=oplus, line width=0.5pt, mark size=2.5pt, mark options={solid, brown}]                            table [x=nrelements,y=eta6] {datanumerics/mybsp3-P2FEM-nvb3eta3T2ada.dat};
\logLogSlopeTriangle{0.8}{0.2}{0.75}{-1/3}{black}{\scriptsize};
\logLogSlopeTrianglelow{0.8}{0.2}{0.15}{-1}{black}{\scriptsize};

\legend{
$\lambda_\bullet'$ (unif.),
$\lambda_\bullet''$ (unif.),
$\lambda_\bullet'''$ (unif.),
$\mu_\bullet'$ (unif.),
$\mu_\bullet''$ (unif.),
$\mu_\bullet'''$ (unif.),
$\lambda_\bullet'$ (adap.),
$\lambda_\bullet''$ (adap.),
$\lambda_\bullet'''$ (adap.),
$\mu_\bullet'$ (adap.),
$\mu_\bullet''$ (adap.),
$\mu_\bullet'''$ (adap.),
}
\end{loglogaxis}
\end{tikzpicture}
}
\caption{Experiment from Section~\ref{ex3} with  unknown solution with generic singularity. We use the (minimal) refinement with (M3)
for both, uniform and adaptive refinement.
}
\label{fig:mybsp3errornvb3}
\end{figure}
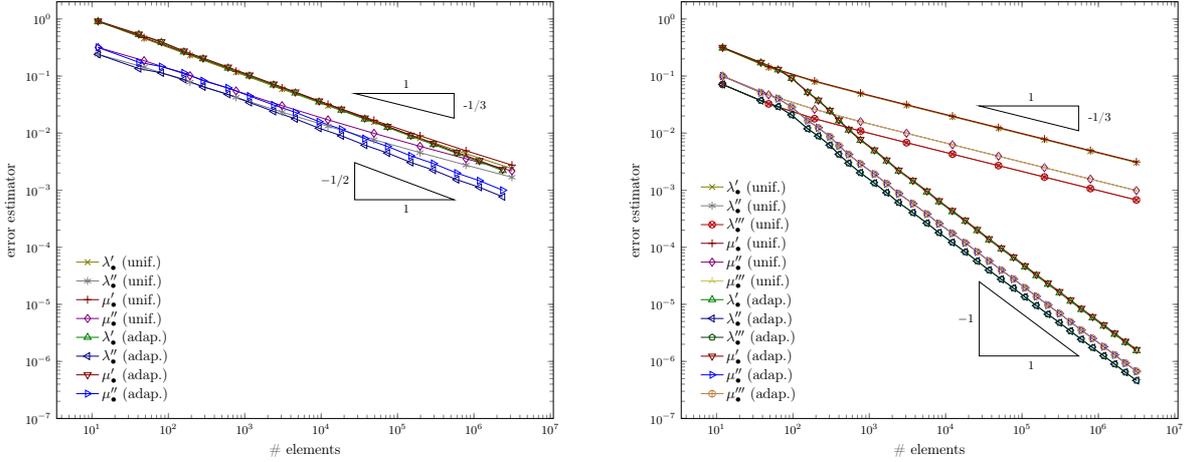

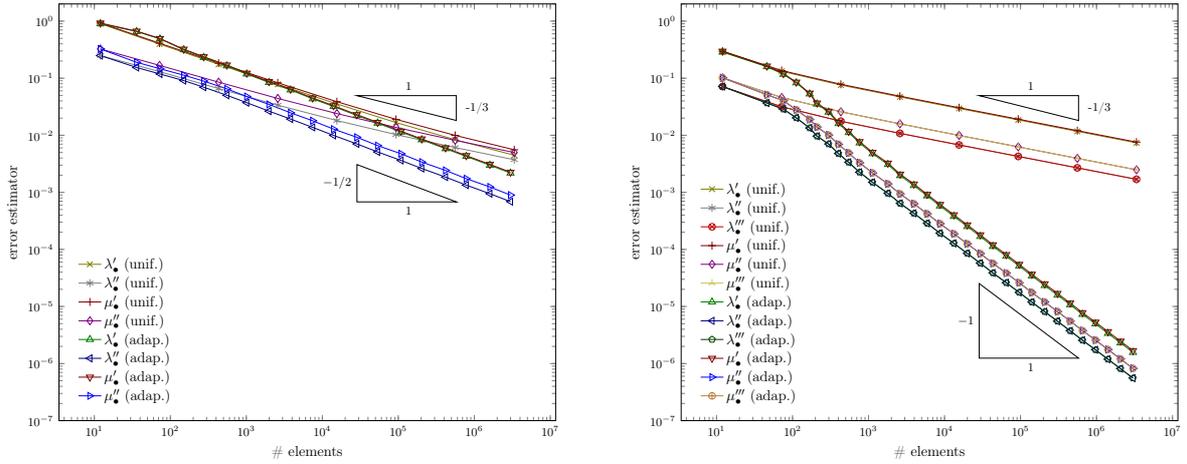
\begin{figure}
\centering
\subfigure[${\mathcal{S}}^1$-FEM with (M3') and $\eta_\bullet(T,\widehat u_\bullet)^2 :=\lambda_\bullet(T,\widehat u_\bullet)^2+{\rm osc}_\bullet(T)^2$.]{\label{subfig:mybsp3errornvb5P1}
\begin{tikzpicture}[scale=0.5]
\begin{loglogaxis}[width=0.92\textwidth,
xlabel={\small \# elements}, ylabel={\small error estimator}, font={\scriptsize},
ymin=1e-7,ymax=2e-0,
legend style={font=\small, draw=none, fill=none, cells={anchor=west}, legend pos=south west}]
\addplot [color=red!50!green, mark=x, line width=0.5pt, mark size=3pt, mark options={solid, red!50!green}]                     table [x=nrelements,y=eta1] {datanumerics/mybsp3-P1FEM-nvb5eta2T2uni.dat};
\addplot [color=gray, mark=asterisk, line width=0.5pt, mark size=3pt, mark options={solid, gray}]                              table [x=nrelements,y=eta2] {datanumerics/mybsp3-P1FEM-nvb5eta2T2uni.dat};
\addplot [color=black!50!red, mark=+, line width=0.5pt, mark size=3pt, mark options={solid, black!50!red}]                     table [x=nrelements,y=eta4] {datanumerics/mybsp3-P1FEM-nvb5eta2T2uni.dat};
\addplot [color=violet, mark=diamond, line width=0.5pt, mark size=3pt, mark options={solid, violet}]                           table [x=nrelements,y=eta5] {datanumerics/mybsp3-P1FEM-nvb5eta2T2uni.dat};
\addplot [color=black!50!green, mark=triangle, line width=0.5pt, mark size=3pt, mark options={solid, black!50!green}]          table [x=nrelements,y=eta1] {datanumerics/mybsp3-P1FEM-nvb5eta2T2ada.dat};
\addplot [color=black!50!blue, mark=triangle, line width=0.5pt, mark size=3pt, mark options={solid, rotate=90, black!50!blue}] table [x=nrelements,y=eta2] {datanumerics/mybsp3-P1FEM-nvb5eta2T2ada.dat};
\addplot [color=black!50!red, mark=triangle, line width=0.5pt, mark size=3pt, mark options={solid, rotate=180, black!50!red}]  table [x=nrelements,y=eta4] {datanumerics/mybsp3-P1FEM-nvb5eta2T2ada.dat};
\addplot [color=blue, mark=triangle, line width=0.5pt, mark size=3pt, mark options={solid, rotate=270, blue}]                  table [x=nrelements,y=eta5] {datanumerics/mybsp3-P1FEM-nvb5eta2T2ada.dat};
\logLogSlopeTriangle{0.8}{0.2}{0.78}{-1/3}{black}{\scriptsize};
\logLogSlopeTrianglelow{0.8}{0.2}{0.525}{-1/2}{black}{\scriptsize};

\legend{
$\lambda_\bullet'$ (unif.),
$\lambda_\bullet''$ (unif.),
$\mu_\bullet'$ (unif.),
$\mu_\bullet''$ (unif.),
$\lambda_\bullet'$ (adap.),
$\lambda_\bullet''$ (adap.),
$\mu_\bullet'$ (adap.),
$\mu_\bullet''$ (adap.),
}
\end{loglogaxis}
\end{tikzpicture}
}
\hspace{0.015\textwidth}
\subfigure[${\mathcal{S}}^2$-FEM with (M3') and $\eta_\bullet(T,\widehat u_\bullet)^2 :=\lambda_\bullet(T,\widehat u_\bullet)^2+{\rm osc}_\bullet(T)^2$.]{\label{subfig:mybsp3errornvb5P2}
\begin{tikzpicture}[scale=0.5]
\begin{loglogaxis}[width=0.92\textwidth,
xlabel={\small \# elements}, ylabel={\small error estimator}, font={\scriptsize},
ymin=1e-7,ymax=2e-0,
legend style={font=\small, draw=none, fill=none, cells={anchor=west}, legend pos=south west}]
\addplot [color=red!50!green, mark=x, line width=0.5pt, mark size=3pt, mark options={solid, red!50!green}]                    table [x=nrelements,y=eta1] {datanumerics/mybsp3-P2FEM-nvb5eta2T2uni.dat};
\addplot [color=gray, mark=asterisk, line width=0.5pt, mark size=3pt, mark options={solid, gray}]                             table [x=nrelements,y=eta2] {datanumerics/mybsp3-P2FEM-nvb5eta2T2uni.dat};
\addplot [color=black!25!red, mark=otimes, line width=0.5pt, mark size=2.5pt, mark options={solid, black!25!red}]             table [x=nrelements,y=eta3] {datanumerics/mybsp3-P2FEM-nvb5eta2T2uni.dat};
\addplot [color=black!50!red, mark=+, line width=0.5pt, mark size=3pt, mark options={solid, black!50!red}]                    table [x=nrelements,y=eta4] {datanumerics/mybsp3-P2FEM-nvb5eta2T2uni.dat};
\addplot [color=violet, mark=diamond, line width=0.5pt, mark size=3pt, mark options={solid, violet}]                          table [x=nrelements,y=eta5] {datanumerics/mybsp3-P2FEM-nvb5eta2T2uni.dat};
\addplot [color=black!25!yellow, mark=Mercedes star, line width=0.5pt, mark size=3pt, mark options={solid, black!25!yellow}]  table [x=nrelements,y=eta6] {datanumerics/mybsp3-P2FEM-nvb5eta2T2uni.dat};
\addplot [color=black!50!green, mark=triangle, line width=0.5pt, mark size=3pt, mark options={solid, black!50!green}]         table [x=nrelements,y=eta1] {datanumerics/mybsp3-P2FEM-nvb5eta2T2ada.dat};
\addplot [color=black!50!blue, mark=triangle, line width=0.5pt, mark size=3pt, mark options={solid, rotate=90, black!50!blue}]table [x=nrelements,y=eta2] {datanumerics/mybsp3-P2FEM-nvb5eta2T2ada.dat};
\addplot [color=black!75!green, mark=pentagon, line width=0.5pt, mark size=2.5pt, mark options={solid, black!75!green}]       table [x=nrelements,y=eta3] {datanumerics/mybsp3-P2FEM-nvb5eta2T2ada.dat};
\addplot [color=black!50!red, mark=triangle, line width=0.5pt, mark size=3pt, mark options={solid, rotate=180, black!50!red}] table [x=nrelements,y=eta4] {datanumerics/mybsp3-P2FEM-nvb5eta2T2ada.dat};
\addplot [color=blue, mark=triangle, line width=0.5pt, mark size=3pt, mark options={solid, rotate=270, blue}]                 table [x=nrelements,y=eta5] {datanumerics/mybsp3-P2FEM-nvb5eta2T2ada.dat};
\addplot [color=brown, mark=oplus, line width=0.5pt, mark size=2.5pt, mark options={solid, brown}]                            table [x=nrelements,y=eta6] {datanumerics/mybsp3-P2FEM-nvb5eta2T2ada.dat};
\logLogSlopeTriangle{0.8}{0.2}{0.78}{-1/3}{black}{\scriptsize};
\logLogSlopeTrianglelow{0.8}{0.2}{0.15}{-1}{black}{\scriptsize};

\legend{
$\lambda_\bullet'$ (unif.),
$\lambda_\bullet''$ (unif.),
$\lambda_\bullet'''$ (unif.),
$\mu_\bullet'$ (unif.),
$\mu_\bullet''$ (unif.),
$\mu_\bullet'''$ (unif.),
$\lambda_\bullet'$ (adap.),
$\lambda_\bullet''$ (adap.),
$\lambda_\bullet'''$ (adap.),
$\mu_\bullet'$ (adap.),
$\mu_\bullet''$ (adap.),
$\mu_\bullet'''$ (adap.),
}
\end{loglogaxis}
\end{tikzpicture}
}
\caption{Experiment from Section~\ref{ex3} with  unknown solution with generic singularity. We use the (minimal) refinement with (M3')
for both, uniform and adaptive refinement.
}
\label{fig:mybsp3errornvb5}
\end{figure}
\subsection{Experiment with unknown solution with generic singularity}
\label{ex3}
For this example, we define $f=1$ in $\Omega$ and $u=0$ on $\Gamma$. The solution is unknown.
Therefore, we only plot the estimators in Figure~\ref{fig:mybsp3errornvb3} for (M3)
and in Figure~\ref{fig:mybsp3errornvb5} for (M3'). All estimators are efficient and reliable. Hence, the convergence rate
of our numerical solution is observed by the asymptotics of the estimators.
As in Example~\ref{ex2}, uniform mesh refinement leads to a suboptimal convergence rate,
whereas Algorithm~\ref{algorithm} reproduces the optimal rates by adaptive mesh refinement.
For ${\mathcal{S}}^2$-FEM, we observe that $\lambda_\bullet''=\lambda_\bullet'''$ and $\mu_\bullet''=\mu_\bullet'''$ since $f$ is constant.

\textbf{Acknowledgement.}\quad
The authors are supported by the Austrian Science Fund (FWF) through the research projects \textit{Optimal isogeometric boundary element method} (grant P29096) and \textit{Optimal adaptivity for BEM and FEM-BEM coupling} (grant P27005), the doctoral school \textit{Dissipation and dispersion in nonlinear PDEs} (grant W1245), and the special research program \textit{Taming complexity in PDE systems} (grant SFB F65).

\bibliographystyle{alpha}
\bibliography{literature}

\begin{thebibliography}{CKNS08}

\bibitem[AO00]{aoAposteriori}
Mark Ainsworth and J.~Tinsley Oden.
\newblock {\em A posteriori error estimation in finite element analysis}.
\newblock Pure and Applied Mathematics. John Wiley \& Sons, New York, 2000.

\bibitem[{Ban}96]{bank96}
Randolph~E. {Bank}.
\newblock {Hierarchical bases and the finite element method.}
\newblock In {\em Acta Numerica}, volume~5, pages 1--43. Cambridge University
  Press, 1996.

\bibitem[BEK96]{bek96}
Folkmar~A. {Bornemann}, Bodo {Erdmann}, and Ralf {Kornhuber}.
\newblock {A posteriori error estimates for elliptic problems in two and three
  space dimensions.}
\newblock {\em {SIAM J. Numer. Anal.}}, 33(3):1188--1204, 1996.

\bibitem[BS93]{bank93}
Randolph~E. {Bank} and R.~Kent {Smith}.
\newblock {A posteriori error estimates based on hierarchical bases.}
\newblock {\em {SIAM J. Numer. Anal.}}, 30(4):921--935, 1993.

\bibitem[BW85]{bank85}
Randolph~E. {Bank} and Alan {Weiser}.
\newblock {Some a posteriori error estimators for elliptic partial differential
  equations.}
\newblock {\em {Math. Comput.}}, 44(170):283--301, 1985.

\bibitem[CFPP14]{axioms}
Carsten Carstensen, Michael Feischl, Marcus Page, and Dirk Praetorius.
\newblock Axioms of adaptivity.
\newblock {\em Comput. Math. Appl.}, 67(6):1195--1253, 2014.

\bibitem[CKNS08]{ckns}
J.~Manuel Cascon, Christian Kreuzer, Ricardo~H. Nochetto, and Kunibert~G.
  Siebert.
\newblock Quasi-optimal convergence rate for an adaptive finite element method.
\newblock {\em SIAM J. Numer. Anal.}, 46(5):2524--2550, 2008.

\bibitem[DN02]{dn02}
Willy D\"orfler and Ricardo~H. Nochetto.
\newblock Small data oscillation implies the saturation assumption.
\newblock {\em Numer. Math.}, 91(1):1--12, 2002.

\bibitem[D{\"o}r96]{doerfler}
Willy D{\"o}rfler.
\newblock A convergent adaptive algorithm for {P}oisson's equation.
\newblock {\em SIAM J. Numer. Anal.}, 33(3):1106--1124, 1996.

\bibitem[FFP14]{ffp14}
Michael Feischl, Thomas F\"uhrer, and Dirk Praetorius.
\newblock Adaptive {FEM} with optimal convergence rates for a certain class of
  nonsymmetric and possibly nonlinear problems.
\newblock {\em SIAM J. Numer. Anal.}, 52(2):601--625, 2014.

\bibitem[FOP10]{fop}
Samuel {Ferraz-Leite}, Christoph Ortner, and Dirk Praetorius.
\newblock Convergence of simple adaptive {G}alerkin schemes based on {$h-h/2$}
  error estimators.
\newblock {\em Numer. Math.}, 116(2):291--316, 2010.

\bibitem[FP08]{fp2008}
Samuel {Ferraz-Leite} and Dirk Praetorius.
\newblock Simple a posteriori error estimators for the {$h$}-version of the
  boundary element method.
\newblock {\em Computing}, 83(4):135--162, 2008.

\bibitem[HNW87]{hnw87}
Ernst {Hairer}, Syvert~P. {N{\o}rsett}, and Gerhard {Wanner}.
\newblock {\em {Solving ordinary differential equations. I: Nonstiff
  problems.}}
\newblock Springer-Verlag, Berlin, 1987.

\bibitem[KPP13]{kpp}
Michael Karkulik, David Pavlicek, and Dirk Praetorius.
\newblock On {2D} newest vertex bisection: Optimality of mesh-closure and
  {$H^1$}-stability of {$L_2$}-projection.
\newblock {\em Constr. Approx.}, 38(2):213--234, 2013.

\bibitem[KS11]{ks}
Christian Kreuzer and Kunibert~G. Siebert.
\newblock Decay rates of adaptive finite elements with {D}\"orfler marking.
\newblock {\em Numer. Math.}, 117(4):679--716, 2011.

\bibitem[MNS00]{mns}
Pedro Morin, Ricardo~H. Nochetto, and Kunibert~G. Siebert.
\newblock Data oscillation and convergence of adaptive {FEM}.
\newblock {\em SIAM J. Numer. Anal.}, 38(2):466--488, 2000.

\bibitem[Ste07]{stevenson07}
Rob Stevenson.
\newblock Optimality of a standard adaptive finite element method.
\newblock {\em Found. Comput. Math.}, 7(2):245--269, 2007.

\bibitem[Ste08]{stevenson}
Rob Stevenson.
\newblock The completion of locally refined simplicial partitions created by
  bisection.
\newblock {\em Math. Comp.}, 77(261):227--241, 2008.

\bibitem[SZ90]{sz}
L.~Ridgway Scott and Shangyou Zhang.
\newblock Finite element interpolation of nonsmooth functions satisfying
  boundary conditions.
\newblock {\em Math. Comp.}, 54(190):483--493, 1990.

\bibitem[Ver13]{verfuerth}
R\"udiger Verf\"urth.
\newblock {\em A posteriori error estimation techniques for finite element
  methods}.
\newblock Numerical Mathematics and Scientific Computation. Oxford University
  Press, Oxford, 2013.

\end{thebibliography}

\end{document}